\documentclass[11pt]{article}

%
\usepackage{geometry}
\usepackage{mathtools}
\usepackage{tikz}
\usetikzlibrary{arrows}
\usetikzlibrary{patterns}
\usetikzlibrary{hobby}
\usetikzlibrary{decorations.pathreplacing}

\usepackage{graphicx}
\usepackage{amsmath,amssymb,amsthm}
\usepackage{enumerate}
\usepackage{hyperref}
\usepackage[active]{srcltx}
\usepackage[T1]{fontenc}
\usepackage{color}
\usepackage[makeroom]{cancel}

\geometry{hmargin=1in,vmargin=1in}
\newtheorem{theo}{Theorem}

\newtheorem{lem}{Lemma}[section]
\newtheorem{defi}[lem]{Definition}
\newtheorem{cor}[lem]{Corollary}
\newtheorem{prop}[lem]{Proposition}
\newtheorem{rmk}[lem]{Remark}
\newcounter{asscount}

\newtheorem{ass}[asscount]{Assumption}

\renewcommand{\L}{{\mathcal L}}
\newcommand{\Lt}{{\mathcal L ^*}}
\renewcommand{\t}{{\theta}}
\newcommand{\qtext}[1]{\quad\mbox{#1}\quad}
\newcommand{\qqtext}[1]{\qquad\mbox{#1}\qquad}
\newcommand{\1}{{\boldsymbol 1}}

\newcommand{\rd}{\mathrm d}
\newcommand{\R}{\mathbb{R}}

\newcommand{\grad}{\operatorname{grad}}
\newcommand{\EVI}{\mathsf{EVI}}
\newcommand{\EDI}{\mathsf{EDI}}
\newcommand{\EDE}{\mathsf{EDE}}
\renewcommand{\d}{\mathsf{d}}
\newcommand{\X}{\mathsf{X}}
\newcommand{\E}{\mathsf{E}}
\newcommand{\Dom}{\operatorname{Dom}}
\newcommand{\Vol}{\operatorname{Vol}}
\newcommand{\Ric}{\operatorname{Ric}}
\newcommand{\Hess}{\operatorname{Hess}}

\newcommand{\W}{{\mathcal{W}}}
\newcommand{\F}{{\mathcal{F}}}
\newcommand{\D}{{F}}
\renewcommand{\H}{{\mathcal{H}}}
\newcommand{\Hpi}{{\mathcal{H}_\pi}}

\newcommand{\dist}{\mathsf d}
\newcommand{\dS}{\mathsf d_{\theta}}

 \newcommand{\PP}{{\mathbb{P}}}
 \newcommand{\EE}{{\mathbb{E}}}
 \newcommand{\QQ}{{\mathbb{Q}}}

\newcommand{\mrest}{
  \,\raisebox{-.127ex}{\reflectbox{\rotatebox[origin=br]{-90}{$\lnot$}}}\,%
}
\newcommand{\pf}{{}_{\#}}

\renewcommand{\P}{\mathcal{P}}

\def\dive{\operatorname{div}}

\numberwithin{equation}{section}
\begin{document}
\title{Hidden dissipation and convexity for Kimura equations
}
\date{}
\author{Jean-Baptiste Casteras, L\'eonard Monsaingeon}

\maketitle
\begin{abstract}
In this paper we establish a rigorous gradient flow structure for one-dimensional Kimura equations with respect to some Wasserstein-Shahshahani optimal transport geometry.
This is achieved by first conditioning the underlying stochastic process to non-fixation in order to get rid of singularities on the boundaries, and then studying the conditioned $Q$-process from a more traditional and variational point of view.
In doing so we complete the work initiated in [Chalub \textit{et Al.}, Gradient flow formulations of discrete and continuous evolutionary models: a unifying perspective. Acta App Math., 171(1), 1-50], where the gradient flow was identified only formally.
The approach is based on the \emph{Energy Dissipation Inequality} and \emph{Evolution Variational Inequality} notions of metric gradient flows.
Building up on some convexity of the driving entropy functional, we obtain new contraction estimates and quantitative long-time convergence towards the stationary distribution.
\end{abstract}

\bigskip
\textbf{Keywords:} Kimura equation; $Q$-process; gradient flow; optimal transport; entropy

\tableofcontents

\section{Introduction}

The Kimura equation
\begin{equation}
\label{eq:Kimura_p_simple}
\partial_t p=\partial^2_{xx}(x(1-x)p)+\partial_x(x(1-x)p\partial_x U)
,\hspace{1cm}t>0, x\in [0,1]
\end{equation}
was derived by Motoo Kimura in the 1950's \cite{kimura1954stochastic,kimura1962probability,kimura1964diffusion} as a large population approximation $N\to\infty$ of discrete Moran-Wright-Fisher models \cite{fisher1923xxi,wright1929evolution,moran1958random}.
Roughly speaking, the latter discrete stochastic processes aim at describing random fluctuations in the genetic expression within a fixed-size but large population of $N\gg 1$ individuals.
Here we consider two alleles $\mathbb A,\mathbb B$ only, and in \eqref{eq:Kimura_p_simple} the variable $x\in [0,1]$ describes the continuous fraction of the focal allele $\mathbb A$ in the population (and $1-x$ that of $\mathbb B$).
The function $p(t,x)$ thus gives the probability of finding a relative composition $x\in [0,1]$ of gene $\mathbb A$ within the population at time $t$.
The smooth potential $U(x)$ encodes selective preference (fitness difference) of $\mathbb A$ over $\mathbb B$ in the spirit of Darwin's evolution.

We cannot stress enough the fact that \emph{no boundary conditions are enforced} in \eqref{eq:Kimura_p_simple}.
Indeed it is well-known in evolutionary genetics \cite{star2013effects} that, once genetic drift starts, variants tend to either undergo extinction or fixate in finite time, respectively $x=0$ ($\mathbb B$ fixates) and $x=1$ ($\mathbb A$ fixates).
As a consequence one expects that ``solutions'' of \eqref{eq:Kimura_p_simple}, if any, should develop singular Dirac-deltas at $x=0,1$ in finite time, corresponding to such extinction/fixation scenarios, and one therefore cannot expect that biologically relevant solutions remain smooth.
This is reminiscent from the discrete Moran-Wright-Fisher models (with state space $i\in\{0,\dots, N\}$ counting the number of type $\mathbb A$ individuals), for which $i=0$ and $i=N$ are \emph{absorbing states}.
Note also that the diffusion coefficient $\theta(x)=x(1-x)$ in \eqref{eq:Kimura_p_simple} vanishes on the boundary $x\in\{0,1\}$, and, although formally parabolic in the interior, the problem is degenerate at the boundary.
No matter how one wishes to approach it, the mathematical analysis (well-posedness, regularity, etc.) of this type of models is bound to fall outside of the scope of standard parabolic theory, and delicate phenomena must eventually be dealt with.
We refer the reader to \cite{EM,epstein2013degenerate} and references therein for the mathematical theory of such degenerate operators.
\\

In the thermodynamic limit $N\to\infty$, and under the so-called \emph{weak selection} assumption, it was proved in \cite{Chalub_Souza:CMS_2009,chalub2006continuous} that solutions of the discrete Moran-Wright-Fisher process converge towards solutions of \eqref{eq:Kimura_p_simple}.
In order both to establish this discrete-to-continuous convergence and to account for finite-time fixation in the limiting model, the right notion of solutions for \eqref{eq:Kimura_p_simple} is that of \emph{measure-valued, very weak solutions}, see Definition~\ref{def:very_Weak_sol} later on.
These solutions will be of the form
$$
p_t(\rd x)=p^\partial_t(0)\delta_0(\rd x)+ \mathring p_t(x)\rd x+ p^\partial_t(1)\delta_1(\rd x)
$$
and, as measures, are really supported on the whole state-space $\bar E=[0,1]$ and not just $E=(0,1)$.
The transient part $\mathring p_t(x)$ will be a smooth interior solution of \eqref{eq:Kimura_p_simple} \emph{without boundary conditions}, and the boundary part $p^{\partial}_t(x)$ accounts for fixation at $x=0,1$.
Moreover, such solutions automatically satisfy the two conservation laws
\begin{equation}
\label{eq:conservation_laws_p}
\frac d{dt}\int_{\bar E} 1\,p_t(\rd x)=0
\qqtext{and}
\frac{d}{dt}\int_{\bar E} F(x)\,p_t(\rd x)=0.
\end{equation}
The first one is nothing but the conservation of total probabilities.
In the second one, $F(x)$ is the so-called \emph{fixation probability} and encodes the probability that, starting from an initial composition $x$, allele $\mathbb A$ eventually fixates and $\mathbb B$ goes extinct.
Both laws are already present in the discrete models and should therefore naturally be inherited, in the limit $N\to\infty$, as a necessary and biologically relevant conservation.
Very roughly speaking, these two laws together with the eventual emergence of fixated Dirac-deltas (both encoded in our choice of a very weak formulation) act as proxies for the missing boundary conditions and render the parabolic problem well-posed for measure-valued solutions \cite{Chalub_Souza:CMS_2009,danilkina2018conservative}.
It can be shown \cite{Chalub_Souza:CMS_2009,chalub2021gradient} that \eqref{eq:conservation_laws_p} is equivalent to the two boundary kinematic relations
$$
\partial_t  p^\partial_t(x)=\mathring p_t(x), \hspace{1cm}
t>0,\,x\in\{0,1\},
$$
which will clearly appear in the infinitesimal generators of the underlying stochastic process in section~\ref{sec:Kimura_Qprocess}.
It turns out that, although degenerate, the diffusion is still strong enough in the interior so as to force $\mathring p_t(x)\geq c_t>0$ for any $t>0$, all the way to the boundary and in particular for $x=0,1$.
Hence $p^\partial_t(x)>0$ for any $t>0$, and singular Dirac-deltas do develop instantaneously just as suggested from the discrete models.
What is more, fixated states actually attract the whole dynamics in the long run: the interior part $\|\mathring p_t\|_\infty\to 0$ exponentially fast and
$$
p_t\rightharpoonup p_\infty=a_\infty\delta_0+b_\infty\delta_1
\hspace{1cm} \mbox{as }t\to\infty,
$$
where the coefficients $a_\infty,b_\infty\geq 0$ can be computed explicitly from the two conservation laws \eqref{eq:conservation_laws_p} and $p_0$ by solving $a_\infty+b_\infty=1,a_\infty F(0)+b_\infty F(1)=\int_{\bar E}F(x) p_0(\rd x)$.
Similarly, any convex combination $a\delta_0 +(1-a)\delta_1$ yields a stationary (very weak) solution of \eqref{eq:Kimura_p_simple}.
\\

On a slightly different note, it has been known for more than twenty years now \cite{JKO98,otto2001geometry} that Fokker-Planck equations (as well as many nonlinear drift-diffusion-aggregation equations) can be interpreted as gradient-flows of certain natural entropy functionals with respect to Wasserstein optimal transport geometry.
(Here and throughout we use the sign convention opposite to physics, and entropy \emph{decreases} in time.)
We refer to \cite{villani2003topics,OTAM} for a gentle and applied introduction and to \cite{villani_BIG,AGS} for an extended account of the theory.
This provided deep and geometric insights, in particular convergence in the long-time regime towards the stationary distribution of the underlying stochastic process can be reinterpreted as a natural consequence of the \emph{displacement convexity} \cite{mccann1997convexity} of the driving entropy.
This was originally derived in a purely Euclidean framework (the Wasserstein distance $\W$ being built upon the underlying Euclidean cost $c(x,y)=|x-y|_{\R^d}^2$), but \cite{lisini2009nonlinear} later dealt with variable coefficients by viewing $\R^d$ as an intrinsic Riemannian manifold reflecting those coefficients, and consequently adapting the Wasserstein distance.
On the discrete side of things, \cite{maas2011gradient} recently tackled the case of finite Markov processes by constructing a suitable discrete Wasserstein-like transportation distance $\mathbb W_N$, and again identifying the discrete Markov dynamics as the gradient-flow of the (discrete) entropy in this finite-dimensional geometry.
N.~Gigli and J.~Maas \cite{gigli2013gromov} proved that, as the discretization $N\to\infty$, the discrete Wasserstein space Gromov-Hausdorff converges towards the standard continuous Wasserstein space (at least on the torus).
Thus, one may expect that gradient-flows converge accordingly.

This strongly suggests that the convergence of discrete Moran-Wright-Fisher models towards continuous Kimura equations should admit a deeper interpretation than just pointwise convergence of solutions, backing up even more the hope that continuous models can serve as a good approximation of the discrete ones.
In some sense, not only ``solutions converge to solutions'', but in fact ``variational gradient structures converge to variational gradient structures''.
A natural notion arising at this point is that of \emph{evolutionary Gamma-convergence} of gradient flows \cite{sandier2004gamma}, which encodes the compatibility between the discrete and continuous gradient flow structures as $N\to\infty$ (roughly speaking, such a convergence amounts to passing to the limit in the \emph{Energy Dissipation Inequality}, as implemented e.g. in \cite{disser2015gradient} for the standard Euclidean Fokker-Planck equation).
This observation was precisely the starting point of \cite{chalub2021gradient}, where the discrete and continuous gradient structures as well as their compatibility were investigated.
In order to take into account the degeneracy at the boundaries, the underlying state space $\bar E=[0,1]$ is not viewed from the usual Euclidean standpoint, but must rather be endowed with the so-called \emph{Shahshahani distance} $\dS$ \cite{shahshahani1979new} much in the spirit of \cite{lisini2009nonlinear}.
The Shahshahani distance is well-known in evolutionary genetics, particularly in the so-called \emph{replicator dynamics} \cite{hofbauer1998evolutionary}, and it is thus no surprise that it arises here.
However, the underlying metric space $(\bar E,\dS)$ is not truly speaking a smooth Riemannian manifold:
some singularities arise at the endpoints $x=0,1$, which is of course reminiscent from the fixation phenomenon, and this poses serious technical issues.
Moreover, and more importantly, the lack of irreducibility in the discrete models (a crucial assumption in \cite{maas2011gradient}) and the resulting degeneracy in the continuous diffusions prevent from directly applying the existing optimal transport theory.

In \cite{chalub2021gradient} it was suggested that, in order to make sense of this variational convergence of gradient flows, one should actually condition both the discrete and continuous underlying stochastic processes to non-fixation in order to first obtain proper gradient-flows, and that only those conditioned gradient flows converge one to another.
In fact we shall make a case later on, at least in the continuous realm, that the unconditioned Kimura diffusion cannot be a gradient flow in any sense, at least not of the natural entropy functional with respect to the biologically relevant Wasserstein-Shahshahani distance.
It turns out that this type of conditioning is known in the probabilistic literature as a \emph{Yaglom-limit}.
The resulting \emph{Quasi-Stationary Distribution} (QSD) and limiting \emph{$Q$-process} were studied in a very abstract framework in \cite{CV}, see also \cite{meleard2012quasi} for a more population-dynamics oriented perspective.
We will heavily rely here on this abstract probabilistic analysis in order to obtain sufficient analytical information about the PDE, in particular on the boundary behaviour of solutions.
We should also point out that, although the discrete model $N<\infty$ actually enjoys a rigorous Riemannian interpretation \cite{maas2011gradient} allowing to truly identify the Moran-Wright-Fisher model as a smooth gradient flow, the continuous limit is more delicate and the identification of the (conditioned) Kimura PDE as a Wasserstein-Shahshahani gradient flow was only formal in \cite{chalub2021gradient}.
\\

The goal of this work is threefold:
\begin{itemize}
\item
First, we provide a detailed construction of the conditioned $Q$-process, which in \cite{chalub2021gradient} was merely exploited as a purely analytical change of variables and not really addressed from a probabilistic standpoint.
This is Theorem~\ref{theo:Qprocess}.
 \item 
 Second, we fill a gap left in \cite{chalub2021gradient} by assigning a rigorous gradient flow structure to the conditioned Kimura equation, in the sense of the Italian school of abstract metric gradient flows \cite{AGS} and as stated in Corollary~\ref{cor:EDI_t0_q} and Theorem~\ref{theo:q_EVI}
 \item
 Third, we use this gradient-flow structure to derive quantitative contraction estimates and long-time convergence, both measured in the intrinsic and biologically relevant Wasserstein-Shahshahani transportation distance.
 The corresponding statement is Theorem~\ref{theo:q_EVI}.
\end{itemize}

In our opinion the main interest of this work is to somehow recast the problematic unconditioned Kimura equation in a more or less standard optimal transport (although the Wasserstein-Shahshahani still encompasses the singularities at the boundary), which then opens the way to well-established machinery.
In this optimal transport setting, one notion usually playing an important role is that of displacement convexity of the driving entropy functional, and we will establish this convexity as well.
From abstract metric theory \cite{AGS}, such convexity usually provides Logarithmic Sobolev Inequalities (LSI in short) and in turn explicit convergence rates towards the stationary distribution.
This convexity approach to contractivity and long-time convergence is well-known \cite{carrillo2003kinetic} and we do not pretend to be extremely original here, although it should be pointed out that our logarithmic Sobolev inequality will include a degenerate weight.
One of the biologically interesting features of our analysis of the conditioned dynamics is that our construction merely relies on the lowest spectral part of the unconditioned operator, i.e. the principal eigenvalue $\lambda_0>0$ and eigenfunctions.
By contrast, usual spectral theory is expected to predict (optimal) convergence rates in the conditioned model as well, but this should rely also on the next spectral level $\lambda_1>\lambda_0$.
In the same spirit, nonlinear logarithmic Sobolev inequalities are known to be stronger than linear Poincar\'e inequalities and associated spectral gaps $\lambda_0>0$, but still convey less information than the next order gap $\lambda_1-\lambda_0>0$.
Equivalently, our analysis can be considered as a finer nonlinear layer, constructed above the zero-th order spectral information solely and not requiring any knowledge of the next modes.
\\

The rest of the paper is organized as follows:
In section~\ref{sec:Kimura_not_GF} we briefly explain why the unconditioned evolution simply cannot be realized as a Wasserstein gradient flow, thus justifying the need to rather look into the conditioned process.
The probabilistic construction of the $Q$-process is carried out in section~\ref{sec:Kimura_Qprocess}, where we also investigate basic properties of the two associated Fokker-Planck equations (unconditioned and conditioned).
In section~\ref{sec:metric} we study some features of our Wasserstein-Shahshahani space, compute the derivative (\emph{metric slope}) of the entropy functional, and crucially establish its displacement convexity.
Our last section~\ref{sec:relate_PDE_metric} finally identifies the conditioned Kimura PDE as a rigorous metric gradient flow, and as a consequence we deduce from the abstract theory a contraction estimates, a weighted logarithmic Sobolev inequality, and the quantitative exponential convergence towards the unique stationary distribution as $t\to+\infty$.

\section{Kimura is NOT a gradient flow}
\label{sec:Kimura_not_GF}
Throughout the rest of the paper, and although we shall exclusively work in dimension one $x\in \bar E=[0,1]$, we choose to still write $\dive,\nabla,\Delta$ instead of simply $\partial_x,\partial^2_{xx}$ to ease the comprehension.

Before jumping into the analysis, let us briefly explain why the Kimura equation cannot be a gradient flow with respect to the Wasserstein distance.
(We defer to Section~\ref{sec:metric} the precise definition of Wasserstein distances and prefer to keep the discussion informal at this stage.)
Following \cite{lisini2009nonlinear}, Fokker-Planck equations with coefficients
\begin{equation}
\label{eq:Fokker-Planck_Lisini}
\partial_t \rho =\dive(A(x)(\nabla \rho+ \rho\nabla V)), \hspace{1cm}t>0,\,x\in\R^d
\end{equation}
can be interpreted as suitable Wasserstein gradient flows
$$
``\frac{d\rho}{dt}=-\grad_{\W_g} \mathcal S_V(\rho)"
$$
of the entropy functionals
$$
\mathcal S_V(\rho)=\int \rho\log\rho +\int \rho V
$$
by using some adapted weighted Wasserstein distance $\W_g$.
More precisely, writing $g_x\coloneqq A^{-1}(x)$ (a symmetric, positive-definite matrix) and viewing the Euclidean space as a Riemannian manifold $(\R^d,g)$, the \textit{ad-hoc} transportation distance is built upon the corresponding Riemannian distance $c(x,y)=\d_g^2(x,y)$ instead of the usual Euclidean cost $c(x,y)=|x-y|^2_{\R^d}$.
Note that \eqref{eq:Fokker-Planck_Lisini} admits as a unique stationary measure the usual Gibbs distribution $\mu=\frac{1}{\mathcal Z}e^{-V}$, and the PDE and entropy also read
\begin{equation}
\label{eq:Fokker-Planck_Lisini_div}
\partial_t \rho =\dive\left(A(x)\rho\nabla \log\left(\frac{\rho}{\mu}\right)\right),
\hspace{1cm}
\mathcal S_V(\rho)=\H_\mu(\rho)=\int\frac{\rho}{\mu}\log\left(\frac{\rho}{\mu}\right)\mu,
\end{equation}
the relative entropy (Kullback-Leibler divergence) $\H_\mu(\rho)$ of $\rho$ relatively to the reference measure $\mu$.

Trying to fit this framework, and in view of the $A(x)=x(1-x)$ coefficient in \eqref{eq:Kimura_p_simple}, one is immediately tempted to view $E=(0,1)$ as a Riemannian manifold with tensor $g_x=\frac{1}{x(1-x)}$, which is known as the Shahshahani metrics.
It is equally tempting to rewrite \eqref{eq:Kimura_p_simple} in a divergence form similar to \eqref{eq:Fokker-Planck_Lisini_div}: in doing so, algebra inevitably leads to
$$
\partial_t p =\dive\left(x(1-x) p \nabla\log\left(\frac{p}{\frac{e^{-U}}{x(1-x)}}\right)\right),
\hspace{1cm}
\mathcal S_{\hat U}(p)=\int p \log p+\int p \underbrace{[U(x)+\log(x(1-x))]}_{\hat U(x)}.
$$

Here a first significant issue immediately arises: the potential $\hat U(x)=U(x)+\log(x(1-x))$ appearing in what \emph{should} be the right entropy $\mathcal S_{\hat U}$ is unbounded from below, and ``the unique stationary measure $\mu$'', if it existed, would necessarily be
$$
\mu(\rd x)=e^{-\hat U(x)}\rd x=\frac{e^{-U(x)}}{x(1-x)}\rd x
$$
(possibly up to $\frac{1}{\mathcal Z}$ scaling factors).
But, $U$ being smooth, this putative probability measure $\mu$ fails to be integrable at the endpoints $x=0,1$ of the domain.
This cannot be fixed by taking principal parts in any way.

A second issue is that there cannot exist a \emph{unique} stationary distribution: as mentioned earlier any convex combination $a\delta_0+(1-a)\delta_1$ gives a stationary solution of \eqref{eq:Kimura_p_simple}, and this non-uniqueness stems from the failure of irreducibility (already in the discrete models).
This is in sharp contrast with the usual picture for standard diffusions and Fokker-Planck equations, where the stationary distribution is completely determined as the unique minimizer of the entropy.

What is worse, and as discussed earlier, biologically relevant solutions must crucially account for fixation and are therefore necessarily measure-valued, $p=\mathring p + p^\partial$ (a smooth interior density plus a singular boundary term).
For such measures the very meaning of $\mathcal S_{\hat U}(p)$ is unclear due to the presence of atoms at $x=0,1$, where $\hat U=\log 0=-\infty$.
A tempting way to easily assign a meaning to this undetermined integral would be to effectively let it depend on the interior restriction only, and consider instead
$$
\mathring {\mathcal S}_{\hat U}(p)\coloneqq \int \mathring p \log \mathring p+\int\mathring  p\,\hat U,
\hspace{1cm}p=\mathring p+p^\partial.
$$
This is better behaved at least for bounded interior densities $\mathring p$, since $\hat U(x)=U(x)+\log(x(1-x))$ is at least $L^1$.
This is however also a dead end:
Indeed, a minimum requirement for any reasonable notion of gradient flow is that the driving functional should be non-increasing in time.
In our case it is not difficult to cook-up sequences of atomless initial data $p^k_0=\mathring p^k_0$ such that $\mathring {\mathcal S}_{\hat U}(p^k_0)\to-\infty$.
For example the restriction $p^k_0(\rd x)=\frac{1}{\mathcal Z^k}\chi_{E_k}(x)\mu(\rd x)$ of $\mu=\frac{e^{-U(x)}}{x(1-x)}\rd x$ to a sequence of increasing intervals $E_k=(1/k,1-1/k)\subset E$, suitably renormalized, leads to $\mathring {\mathcal S}_{\hat U}(p^k_0)\approx -\log\log k$.
In particular there exists $p_0=\mathring p_0$ with arbitrarily negative initial entropy $\mathring {\mathcal S}_{\hat U}(p_0)=-C<0$.
From \cite{chalub2006continuous,Chalub_Souza:CMS_2009} it is known that the transient mass decays exponentially fast, and more precisely $\|\mathring p_t\|_{L^\infty(E)}=\mathcal O(e^{-\lambda_0 t})$ for some $\lambda_0>0$.
Now, because $\log(x(1-x))\in L^1(0,1)$, it is not difficult to check that
$$
\mathring {\mathcal S}_{\hat V}(p_t)=\int \mathcal O(e^{-\lambda_0 t})\log\left(\mathcal O(e^{-\lambda_0 t})\right) + \mathcal O(e^{-\lambda_0 t}) [U+\log(x(1-x))] \xrightarrow[t\to\infty]{}0.
$$
When started from $\mathring {\mathcal S}_{\hat U}(p_0)=-C<0$, ``entropy'' would eventually have to increase at some point in order to reach $\mathring {\mathcal S}_{\hat U}(p_\infty)=0$, and this clearly rules out the naive gradient flow structure.

Another natural attempt at compensating for singularities was to try and modify even further the underlying geometry (rather than the entropy functional), by taking into account the bulk/interface interactions between $x\in E,x\in \partial E$ directly into a modified Wasserstein-like transportation distance penalizing the mass flux from the interior to the boundary.
This was initially the motivation for \cite{monsaingeon2021new} by the second author, but unfortunately the approach failed to describe the basic Kimura dynamics discussed herein.
\\

Let us also mention at this stage the work \cite{carrillo2022optimal}, where some Wasserstein gradient flow structure claimed for \eqref{eq:Kimura_p_simple} is leveraged in order to devise mass-variable numerical schemes accurately capturing the emergence of fixation $p_t\rightharpoonup a\delta_0+b\delta_1$ as $t\to\infty$ (see also \cite{duan2019numerical} for a related Lagrangian version and gradient structure).
The transportation distance therein is our exact same Wasserstein-Shahshahani metric (to be discussed shortly), and the entropy is exactly $\mathcal S_{\hat U}(p)=\int p\log p +\int p[U+\log(x(1-x))]$.
However, their model is drastically different, in that the PDE is supplemented with no-flux Neumann boundary conditions
$$
[\nabla(x(1-x)p)+x(1-x)p\nabla U]\cdot \nu=0,
\hspace{1cm}x\in\{0,1\}.
$$
On the mathematical level this dramatically changes the behaviour of solutions, and atoms do not develop in finite time anymore (although \cite[Theorem 1.1]{carrillo2022optimal} erroneously appeals to the measure-valued well-posedness from \cite{Chalub_Souza:CMS_2009}, leading to apparent inconsistencies).
As a consequence ``their'' entropy is well-defined, since then in our notations $p=\mathring p$ and therefore $\mathcal S_{\hat U}(p)=\mathring {\mathcal S}_{\hat U}(p)$ makes sense at least for atomless and smooth densities $p(x)\rd x$.
From the modeling perspective, the absence of atoms makes these solutions biologically irrelevant: fixations in finite time are prohibited, whereas they do occur for the discrete Moran-Wright-Fisher processes that they aim at approximating.
However, the dynamics is still attracted towards the same asymptotic behaviour $p_t\rightharpoonup a_\infty\delta_0+b_\infty\delta_1$ as $t\to\infty$, and an intriguing question is to understand how well this artificial, atomless model approximates the real biological one.
Finally, let us mention for the sake of completeness that a related Wright-Fisher model in opinion formation is studied in \cite{furioli2019wright}, yet with different boundary conditions and slightly different setup (the drift is repulsive and nonzero at the boundaries to begin with, and this makes the degeneracy milder in some sense).
In particular long-time convergence is established by entropy methods and weighted logarithmic Sobolev inequalities, but once again the model does not account for fixation (either in finite or infinite time).
\section{The Kimura process and the $Q$-process}
\label{sec:Kimura_Qprocess}
We will actually work with the generalization 
\begin{equation}
\label{eq:Kimura_p}
\partial_tp=\Delta(\t(x) p )+ \dive(\t(x)p\nabla U),
\hspace{1cm}
t>0,\,x\in[0,1]
\end{equation}
of the Kimura equation \eqref{eq:Kimura_p_simple}, for which boundary conditions again cannot be stipulated in order to allow for fixation.
We write $E=(0,1)$, the state space will be $\bar E=[0,1]$, and in the rest of the paper we enforce without further mention
\begin{ass}
\label{ass:theta}
The coefficients
$ \t,U\in C^\infty(\bar E)$, with $\t(x)>0$ in the interior and
$$
\mbox{simple zeros}\quad\t(0)=\t(1)=0
$$
\end{ass}
\begin{ass}
\label{ass:p0}
 The initial probability distribution $p_0\in \P(\bar E)$ is not completely fixated to start with, i.e. $\operatorname{supp} p_0\not\subset \{0,1\}$.
\end{ass}
The strong $C^\infty$ regularity is assumed here for all coefficients in order not to obfuscate the analysis, but can certainly be relaxed.
In Assumption~\ref{ass:theta} the linear vanishing of $\t$ is really critical:
Different vanishing rates $\t(x)\sim \d_\R(x,\partial E)^\nu$ for $\nu\neq 1$ would lead to very different behaviours, in the sense that either the boundaries would end-up at infinite distance of the interior (a particle starting from $x\in (0,1)$ would not be able to reach the boundary in finite time anymore), or Brownian particles would be allowed to jump back in the domain after being fixated for some random but finite time \cite{feller1951diffusion} (whereas linear behaviour here really guarantees eternal fixation).
Assumption~\ref{ass:p0} is enforced with the sole purpose of ruling out trivial scenarios:
For, whenever $p_0=p^\partial_0$ is initially supported on the absorbing states $x=0,1$, then $p_t\equiv p_0$ for later times and no interesting dynamics can unravel.

For convenience we record here the notion of very weak solutions from \cite{Chalub_Souza:CMS_2009}, which is the right setting if one wishes to allow for fixation while maintaining well-posedness.
Note that the test-functions below are not required to vanish on the lateral boundary.
\begin{defi}
\label{def:very_Weak_sol}
Given $p_0\in \P(\bar E)$ we say that a narrowly continuous curve of measures $t\mapsto p_t(\rd x)$ is a \emph{very weak solution} of \eqref{eq:Kimura_p} if
\begin{multline*}
 -\int_0^\infty\int_{\bar E}\partial_t\varphi(t,x)\, p_t(\rd x)\rd t
 -\int_{\bar E}\varphi(0,x)\,p_0(\rd x)
\\
=\int_0^\infty\int_{\bar E}\t(x) [\Delta \varphi(t,x)-\nabla U(x)\cdot \nabla\varphi(t,x)]\, p_t(\rd x)\rd t
\end{multline*}
for every $\varphi\in C^{1,2}_{c}([0,\infty)\times\bar E)$.
\end{defi}

In order to help the reader let us slightly anticipate on our notations:
$(p_t,(\PP_x)_{x\in \bar E},P_t,\L)$ and $(q_t,(\QQ_x)_{x\in E},\tilde P_t,\tilde \L)$ will stand for the density, path-measure, semi-group, and generator of the unconditioned Kimura stochastic process and of the subsequent conditioned $Q$-process, respectively.
The law $p_t\in \P(\bar E)$ of the unconditioned process will have atoms on the boundaries, and we shall always decompose accordingly
$$
 p_t = \mathring p_t+ p_t^\partial \quad \in \P(E)+\P(\partial E)\cong \P(\bar E).
$$
Here we mean of course $\mathring p=p\mrest E$ and $p^\partial=p\mrest\partial E$ in the sense of restrictions of measures.
Typically the interior part $\mathring p_t(\rd x)=\mathring p_t(x)\rd x$ will have smooth density w.r.t. the Lebesgue measure, while we write $p^\partial_t=p_t^\partial(0)\delta_0+p_t^\partial(1)\delta_1$ with a slight abuse of notations.

Let us also anticipate that we shall have to consider several successive potentials: starting from the smooth potential $U$ in \eqref{eq:Kimura_p}, the conditioning will first induce via some $h$-transform an additional logarithmic drift $2\frac{\nabla\eta}{\eta}=2\nabla\log\eta$ (here $\eta(x)$ is some eigenfunction to be defined later on).
We will accordingly write $\tilde U=U-2\log \eta$ for the conditioned potential.
For the conditioned process, we will show that there exists a unique stationary measure $\pi(\rd x)=\frac{1}{\mathcal Z}\frac{e^{-\tilde U(x)}}{\t(x)}\rd x=\frac{1}{\mathcal Z}e^{-(\tilde U+\log\t)(x)}\rd x$, and we will write accordingly $V=\tilde U+\log \t$.
Finally, our Riemannian viewpoint on $E=(0,1)$ will give rise to the intrinsic Shahshahani volume form $\Vol(\rd x)=\operatorname{vol}(x)\rd x=\frac{1}{\sqrt{\t(x)}}\rd x$, and it will be convenient to take into account this geometric effect by considering next $W=V +\log\operatorname{vol}=V- \log\sqrt\t$.
Summarizing, the reader may want to keep the following roadmap at hand:
$$
U\xrightarrow[\mbox{drift }2\frac{\nabla\eta}{\eta}]{\mbox{conditioning}}\tilde U
\xrightarrow[\pi=\frac{1}{\mathcal Z}e^{-V}\rd x]{\mbox{Gibbs}} V
\xrightarrow[\Vol(\rd x)=\frac{1}{\sqrt{\t(x)}}\rd x]{\mbox{geometry }} W
$$
A last potential implicitly appearing in section~\ref{sec:Kimura_not_GF} was $\hat U=U+\log\t$, which arised when naively trying to write \eqref{eq:Kimura_p} in divergence form $\partial_t p=\Delta(\t p)+\dive(p\t\nabla U)=\dive(\t\nabla p)+\dive(p\t\nabla \hat U)$.
A crucial feature of these various potentials will be their boundary behaviours, summarized here for convenience as well
\begin{equation}
\label{eq:behaviour_all_potentials}
\mbox{as }x\to \partial E=\{0,1\}:
\hspace{1cm}
\left\{
\begin{array}{lcl}
 U & \qquad &\mbox{smooth, bounded}\\
 \hat U & & \to -\infty\\
 \tilde U & & \to +\infty\\
 V & & \to +\infty\\
 W & & \to +\infty
\end{array}
\right..
\end{equation}
We wish to emphasize that the probabilistic analysis in this upcoming section is very strongly inspired from \cite{CV}, to say the very least, and as such we tried to stick to the notations thereof as best as possible.
\subsection{Unconditioned dynamics}
From now on we claim full mathematical rigor.
Consider the strong $\bar E$-valued Markov process defined by the Stochastic Differential Equation
\begin{equation}
\label{eq:SDE_Kimura}
\begin{cases}
\rd X_t=-\t(X_t)\nabla U(X_t)\rd t +\sqrt{2\t(X_t)}\rd B_t\\
\mbox{killed on the boundaries }x\in \partial E=\{0,1\}
\end{cases}
\end{equation}
(The $\sqrt 2$ factor is just here to ensure that we end up with the usual PDE convention for the Laplacian operator $\Delta$ in the sequel, and not the probabilistic $\frac 12 \Delta$ instead.)

Standard one-dimensional methods \cite[Section 5.5]{karatzas2012brownian} give well-posedness in the strong sense, and for any initial distribution $p_0\in \P(\bar E)$ the SDE has a pathwise unique global solution.
Notably, the linear vanishing of $\t$ guarantees that $X_t$ reaches the boundary in finite time, almost surely.
We also refer to Feller's work \cite{feller1951diffusion} for related one-dimensional models.
By the It\^o formula it is easy to check that the infinitesimal generator $\L f(x)\coloneqq \lim\limits_{t\to 0^+} \frac{\EE_xf(X_t)-f(x)}{t}$ is
\begin{equation}
\label{eq:def_L}
\L f(x)=
\begin{cases}
 \t(x)\Delta f(x) - \t(x)\nabla U(x)\cdot \nabla f(x) & \mbox{if }x\in E,\\
 0 & \mbox{if }x\in \partial E,
\end{cases}
\end{equation}
at least for $f\in C^2(\bar E)$.
In order to compute the adjoint operator $\Lt$ acting on measures $p\in \P(\bar E)$, recall that $p=\mathring p+p^\partial$ decomposes into an interior part and a boundary part, and observe that
$$
 \int_{\bar E} f(x) (\Lt  p)(\rd x)
 =\int_{\bar E} (\L f)(x) p(\rd x) = \int_{E} (\L f)(x)\mathring p(\rd x)
$$
because $\L f(x)$ vanishes on the boundary.
Next, if $ \mathring p(\rd x)=\mathring p(x)\rd x$ and $f(x)$ are smooth enough, integration by parts readily gives
\begin{multline*}
\int_{E} (\L f) p
=\int_{E} (\L f)\mathring p\, \rd x
=
\int_{E} (\t\Delta f -\t\nabla U\cdot \nabla f) \mathring p\,\rd x
\\
=
\int_{E} [\Delta(\t  \mathring p) +\dive(\t  \mathring p\nabla U)] f \, \rd x
+\int_{\partial E}
[\t \mathring p\nabla f - f\nabla(\t \mathring p) - \t \mathring p f\nabla U]\cdot \nu\,\rd x.
\end{multline*}
(Here and throughout $\nu=\nu_{\mathrm{ext}}$ stands for the outer unit normal.)
Because $\t$ vanishes linearly on $\partial E$ all the boundary terms where derivatives are not applied to $\t$ cancel, and we are left with
$$
\int_{\bar E} f (\Lt  p)
= 
\int_{E} f\,[\Delta(\t  \mathring p) +\dive(\t  \mathring p\nabla U)] \,\rd x
-\int_{\partial E} f\mathring p\nabla \t\cdot \nu\,\rd x
$$
at least for smooth densities $\mathring p$.
In more practical terms with $\partial E=\{0,1\}$, this identifies the dual generator as
$$
\Lt  p =\underbrace{\Delta(\t  \mathring p) +\dive(\t  \mathring p\nabla U) }_{\mbox{interior}}+ \underbrace{ \mathring p(0)\partial_x\t(0)\delta_0 -  \mathring p(1)\partial_x\t(1)\delta_1}_{\mbox{boundary}}.
$$
Note that 
\begin{enumerate}[(i)]
 \item 
 This only depends on the interior part $ \mathring p$ of $ p=\mathring p + p^\partial$
 \item
 The interior and boundary terms consist of, respectively, a usual differential operator and a singular measure
 \item
 The coefficients $\mathring p(0)\partial_x\t(0),-\mathring p(1)\partial_x\t(1)$ on the boundary are nonnegative (owing to Assumption~\ref{ass:theta} $\t$ has positive slope at $x=0$ and negative slope at $x=1$) and depend on the boundary trace $ \mathring p(0), \mathring p(1)$ of the interior part $ \mathring p$ only, but not on the singular part $ p^\partial$ as one might have expected.
\end{enumerate}
As a consequence, and decomposing again $p=\mathring p+p^\partial$, the abstract Fokker-Planck equation ${\partial_t p_t=\Lt p_t}$
for the law $p_t$ of $X_t$ reads in PDE terms as
\begin{equation}
\label{eq:FP_with_BC}
\begin{cases}
 \partial_t \mathring p_t = \Delta(\t\mathring p_t)+\dive(\t \mathring p_t\nabla U)
 \\
\partial_t p^\partial_t(0)=+\partial_x \t(0) \mathring p_t(0)
\\
\partial_t p_t^\partial(1)=-\partial_x \t(1) \mathring p_t(1).
\end{cases}
\end{equation}
This parabolic problem is nonstandard in the sense that no boundary conditions are imposed for the interior part $\mathring p_t$.
In the boundary flux conditions, $\mathring p_t(0), \mathring p_t(1)$ should be understood as the interior limit (boundary trace) of $\mathring p_t(x)$ as $x\to\{0,1\}$.
As already noticed in \cite[pp. 24]{chalub2021gradient} and \cite{Chalub_Souza:CMS_2009} this, together with the total probability $p_t(\bar E)=\mathring p_t(E)+p^\partial_t(\partial E)=1$, is equivalent to the two conservation laws
\begin{equation}
\label{eq:conservation_laws}
\frac{d}{dt}\int_{\bar E} 1\,p_t (\rd x) =0
\qqtext{and}
\frac{d}{dt}\int_{\bar E} F(x)\, p_t(\rd x) =0,
\end{equation}
where $F(x)$ is the so-called \emph{fixation probability} defined by
$$
\begin{cases}
\t(\Delta F -\nabla U\cdot\nabla F)=0 ,
\\
F(0)=0,F(1)=1.
\end{cases}
$$
Biologically speaking, $F(x)$ encodes the probability that, starting from an initial frequency $x\in [0,1]$ of $\mathbb A$ genes in the population, the process $X_t$ gets absorbed at $X_\tau=1$ (and thus the $\mathbb B$ allele eventually goes extinct in finite time).
Note that, here in dimension 1, the pair $(1,F)$ solve the same 2nd order ODE $G''-U'G'=0$ with different boundary conditions.
They are therefore linearly independent and span the kernel of $\L=\t[\Delta  -\nabla U\cdot\nabla ]$.
In a more geometric language, \eqref{eq:conservation_laws} thus reads as $p_t\in (\operatorname{Ker}\L)^\perp$ for all $t>0$.
\\

We collect now some useful properties of the Fokker-Planck equation.
\begin{lem}[adapted from \cite{Chalub_Souza:CMS_2009}]
\label{lem:prop_p_chalub}
Fix an arbitrary $p_0=\mathring p_0+p^{\partial}_0\in\P(\bar E)$.
There exists a unique global solution $p_t=\mathring p_t+p^{\partial}_t\geq 0$ of \eqref{eq:FP_with_BC} in the sense of Definition~\ref{def:very_Weak_sol}.
This solution satisfies the two conservation laws \eqref{eq:conservation_laws}, and $\mathring p_t$ has density $\mathring p_t(x)\in C^\infty_{t,x}(\R^+\times\bar E)$.
One has the expansion
\begin{equation}
 \label{eq:expansion_series_p}
 \mathring p _t(x)=\sum\limits_{j\geq 0} e^{-\lambda _j t}\hat P_j \alpha_j(x),
 \hspace{1cm}
 \hat P_j=\int_E\alpha_j(x)\ \mathring p_0(\rd x) 
\end{equation}
where $0<\lambda_0<\lambda_1<\dots$ and $\alpha_j(x)$ are a suitable spectral basis for $\Lt$.
The principal eigenfunction $\alpha_0(x)\geq c_0>0$ is bounded away from zero and smooth up to the boundary.
\end{lem}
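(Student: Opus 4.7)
The plan is to combine three ingredients. Existence, uniqueness and nonnegativity of the very weak solution, together with the two conservation laws, are essentially contained in \cite{Chalub_Souza:CMS_2009}; I would reproduce their approach by first solving the primal (generator-side) problem $\partial_t\varphi=\L\varphi$ on $\bar E$, a degenerate parabolic equation of Kimura type. The degenerate parabolic theory of Epstein-Mazzeo \cite{EM,epstein2013degenerate} yields a strongly continuous, positivity-preserving semigroup $(P_t)$ on $C(\bar E)$ with the smoothing property $P_t:C(\bar E)\to C^\infty(\bar E)$ for $t>0$. The Fokker-Planck solution is then defined by duality, $\int\varphi\,dp_t\coloneqq\int (P_t\varphi)\,dp_0$, which produces a narrowly continuous curve in $\P(\bar E)$ solving Definition~\ref{def:very_Weak_sol}; uniqueness follows by density of $C^{1,2}_c$ test functions, and nonnegativity from the maximum principle on the primal side. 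The two conservation laws \eqref{eq:conservation_laws} follow by testing against $\varphi\equiv 1$ and $\varphi=F$, both of which lie in $\operatorname{Ker}\L$.

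For smoothness of $\mathring p_t$, on any compact subinterval $[\delta,1-\delta]\subset E$ the Fokker-Planck equation is uniformly parabolic with smooth coefficients, so classical Schauder theory gives instantaneous $C^\infty_{t,x}$ regularity with the usual parabolic smoothing effect turning the initial measure into a smooth density. Global smoothness up to $\bar E$ follows from the Epstein-Mazzeo parametrix for Kimura-type operators, whose indicial structure at the degenerate boundary $x\in\{0,1\}$ is controlled precisely by the linear vanishing of $\t$ from Assumption~\ref{ass:theta}. For the spectral expansion I would conjugate $\L$ into self-adjoint form via the natural weight $\mu(\rd x)=\frac{e^{-U(x)}}{\t(x)}\rd x$: one verifies that $\L$ is symmetric on $L^2(\mu)$, and because $\bar E$ is compact with $\t$ vanishing only linearly on $\partial E$, the associated form domain embeds compactly into $L^2(\mu)$. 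This yields a discrete spectrum $0<\lambda_0<\lambda_1<\ldots$ with smooth eigenfunctions $\alpha_j$, and completeness combined with the duality pairing produces the expansion \eqref{eq:expansion_series_p}.

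The main technical obstacle is the uniform lower bound $\alpha_0\geq c_0>0$ \emph{up to the boundary}, since the eigenvalue equation $\t(\Delta\alpha_0-\nabla U\cdot\nabla\alpha_0)=-\lambda_0\alpha_0$ degenerates at $x\in\{0,1\}$ and no classical maximum-principle argument directly applies there. I would treat this by a Frobenius/indicial analysis near each endpoint: the linear vanishing $\t(x)\sim\t'(0)x$ makes $x=0$ a regular singular point of Fuchsian type with characteristic exponents $0$ and $1$, so the solution space splits into a branch with strictly positive finite limit at the boundary and a branch vanishing there. A Krein-Rutman argument applied to the positivity-improving semigroup forces the simple principal eigenfunction $\alpha_0$ to be strictly positive in the interior and hence to fall into the nonvanishing branch at both endpoints (the vanishing branch would change sign or contradict simplicity). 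Continuity on the compact set $\bar E$ then upgrades this to the uniform lower bound $\alpha_0\geq c_0>0$, with $\alpha_0\in C^\infty(\bar E)$ inherited from the regularity discussion above.
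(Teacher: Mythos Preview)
Your overall strategy is sound and more fleshed out than the paper's own treatment, which essentially defers to \cite{Chalub_Souza:CMS_2009} and only remarks that the spectral problem is a singular Sturm--Liouville problem of limit-circle-non-oscillatory type in the weighted space $L^2(\frac{1}{\t}\rd x)$, the positive-definiteness there being what forces $\alpha_0\geq c_0>0$. However, there is a genuine confusion in your last paragraph that undermines the boundary lower bound.

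The $\alpha_j$ in the lemma are eigenfunctions of $\L^*$, not of $\L$: this is stated explicitly and is what makes the expansion \eqref{eq:expansion_series_p} consistent with $\partial_t\mathring p=\L^*\mathring p$. The equation you wrote, $\t(\Delta\alpha_0-\nabla U\cdot\nabla\alpha_0)=-\lambda_0\alpha_0$, is $\L\alpha_0=-\lambda_0\alpha_0$; its principal eigenfunction is the $\eta$ of Proposition~\ref{prop:exist_eta}, which \emph{vanishes linearly} on $\partial E$ and sits precisely in the $s=1$ branch of your indicial analysis. So your Krein--Rutman argument, applied to the operator you actually wrote down, proves the opposite of what you want. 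The correct equation $\L^*\alpha_0=-\lambda_0\alpha_0$, namely $\Delta(\t\alpha_0)+\dive(\t\alpha_0\nabla U)=-\lambda_0\alpha_0$, has Frobenius exponents $0$ and $-1$ at each endpoint: one branch has a finite positive limit, the other blows up like $1/\d_\R(x,\partial E)$. Once corrected, the selection mechanism is actually simpler---the singular branch is excluded by membership in the weighted $L^2$ space, not by any sign/simplicity argument---and you recover $\alpha_0\in C^\infty(\bar E)$ with $\alpha_0\geq c_0>0$. If you prefer to stay on the $\L$ side, you must pass through the conjugation $\alpha_j=(\mbox{$\L$-eigenfunction})\cdot\frac{e^{-U}}{\t}$ and track how the singular density $\frac{1}{\t}$ transforms the boundary exponents. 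A smaller caveat: your claimed compact embedding of the form domain into $L^2(\mu)$ is exactly the step where the limit-circle classification invoked by the paper does real work; since $\mu=\frac{e^{-U}}{\t}\rd x$ has infinite mass, this compactness is not automatic and deserves justification.
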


This follows from a closer inspection of results from \cite{Chalub_Souza:CMS_2009} (in particular Lemma 2.2, Lemma 2.3, and Theorem 4.3 therein), and essentially consists in constructing the spectral sequence $(\lambda_j,\alpha_j(x))_{j\geq 0}$ for a certain Sturm-Liouville problem of \emph{limit-circle-non-oscillatory type} \cite{zettl2012sturm}.
The spectral problem is set in a singular weighted space $L^2(\frac{1}{\t(x)}\rd x)$ making the corresponding operator positive-definite (thus resulting in $\alpha_0(x)\geq c_0>0$), but for the sake of brevity we skip the details.
As far as well-posedness is concerned one could equally appeal to \cite{EM}, but since we shall need some spectral information anyway we opted for the constructive approach here based on the expansion \eqref{eq:expansion_series_p}.

Note in particular that $\lambda_1-\lambda_0>0$, and that \eqref{eq:expansion_series_p} provides the interior part $\mathring p_t$ only.
The boundary part $p^\partial_t$ can be reconstructed a posteriori by integrating the boundary conditions in \eqref{eq:FP_with_BC}, namely
\begin{equation}
\label{eq:kinematics_pboundary_from_pint}
p^\partial_t(x)=p^\partial_0(x) -  \partial_\nu\t(x)\int_0^t\mathring p_s(x) \,\rd s
\qqtext{for}x\in\partial E=\{0,1\}.
\end{equation}
It is also worth stressing that, as is apparent from the explicit representation \eqref{eq:expansion_series_p}, the evolution of the interior part $\mathring p_t$ is completely determined by its initial datum $\mathring p_0$, and does not depend on the initial boundary part $p^\partial_0$.

Let us now setup the probabilistic framework needed to define the $Q$-process later on.
We denote $({\bar E}, (\F_t)_{t\geq 0}, (X_t)_{t\geq 0},(P_t)_{t\geq 0}, (\PP_x)_{x\in \bar E})$ the Markov process with state space $\bar E=E\cup\partial E$, associated to the SDE \eqref{eq:SDE_Kimura} starting from $X_0=x$.
This is actually a Feller process.
As usual, when starting from an $X_0$ initially distributed according to some $p_0\in\P(\bar E)$, we write
$$
\PP_{p_0}(\bullet)\coloneqq\int_{\bar E}\PP_x(\bullet)p_0(\rd x)
$$
and the corresponding expectations are denoted $\EE_x, \EE_{p_0}$.
Note that this diffusion process always has continuous paths, hence the absorption time
$$
\tau\coloneqq \tau_{\partial E}=\inf\left\{t\geq 0:\quad X_t\in \partial E\right\}
$$
is a well-defined stopping time.
By definition we have $X_t=X_\tau$ for all $t\geq \tau$.
It is not difficult to check that $\tau<\infty$, $\PP_x$-almost surely for all $x\in \bar E$, and that
$$
\PP_x(t<\tau)>0
\qquad \mbox{for all }x\in E\mbox{ and }t\geq 0.
$$
The following exponential ergodicity of the Kimura process will be crucial for us, by allowing direct applications of abstract results from \cite{CV}
\begin{theo}
\label{theo:exist_alpha}
Let $(\lambda_0,\alpha_0(x))$ be the principal eigenpair as in Lemma~\ref{lem:prop_p_chalub}.
 The probability measure
 $$
 \alpha(\rd x)\coloneqq \frac{1}{\int_E\alpha_0(y)\rd y}\alpha_0(x)\rd x
 $$
 is a \emph{quasi-limiting distribution}, i-e
 \begin{equation}
  \label{eq:expo_CV_alpha}
  \lim\limits_{t\to+\infty}\PP_{p_0}(X_t\in B \,\vert\, \tau>t)=\alpha(B)
 \end{equation}
 exponentially fast in time and uniformly w.r.t. both the initial distribution $p_0\in \P(\bar E)$ and Borel sets $B\in \mathcal B(E)$.
It is also the unique \emph{quasi-stationary distribution} (QSD) \cite{meleard2012quasi} of the Markov process \eqref{eq:SDE_Kimura},
 \begin{equation}
 \label{eq:decay_PPalpha_QSD}
 \PP_\alpha(\bullet\,\vert\,t<\tau)=\alpha(\bullet)
 \qtext{and}
 \PP_\alpha(t<\tau)=e^{-\lambda_0t}
 \qqtext{for all}t\geq 0.
 \end{equation}
 \end{theo}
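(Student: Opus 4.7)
The strategy is to identify $\alpha$ as a QSD directly from the spectral data of Lemma~\ref{lem:prop_p_chalub}, and then invoke the abstract criterion of Champagnat--Villemonais \cite{CV} to upgrade this identification into uniqueness and uniform exponential convergence.

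First, I would observe that running the Fokker--Planck dynamics from the initial datum $\mathring p_0 = \alpha$ only excites the principal mode in the expansion \eqref{eq:expansion_series_p}: since $\alpha_0$ is an eigenfunction of $\Lt$ with eigenvalue $\lambda_0$, one gets
$$\mathring p_t^\alpha(x) = e^{-\lambda_0 t}\, \alpha(x).$$
Integrating over $E$ and renormalizing yield at once $\PP_\alpha(t<\tau) = e^{-\lambda_0 t}$ and $\PP_\alpha(\cdot\mid t<\tau) = \alpha(\cdot)$, which establishes \eqref{eq:decay_PPalpha_QSD} and already shows that $\alpha$ is \emph{a} QSD with absorption rate $\lambda_0$.

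Next, to get the uniform exponential convergence \eqref{eq:expo_CV_alpha} and the uniqueness claim, I would verify the two Doeblin-type conditions of \cite[Theorem~2.1]{CV} with $\nu = \alpha$: namely (A1) there exist $t_0, c_1 > 0$ such that $\PP_x(X_{t_0}\in\cdot\mid t_0<\tau) \geq c_1\, \alpha(\cdot)$ for every $x \in E$; and (A2) there is $c_2 > 0$ such that $\PP_x(t<\tau) \leq c_2^{-1}\, \PP_\alpha(t<\tau)$ uniformly in $x \in E$ and $t \geq 0$. Condition (A2) follows essentially for free from \eqref{eq:expansion_series_p} combined with $\PP_\alpha(t<\tau) = e^{-\lambda_0 t}$, since the whole series for $\mathring p_t^{\delta_x}$ is dominated by its leading mode $e^{-\lambda_0 t}$ uniformly in $x$. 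Condition (A1) is the heart of the matter and relies on a bi-orthogonal expansion of the transition kernel $K_{t_0}(x,y)$ in terms of the eigenfunctions $\alpha_j$ of $\Lt$ and their duals $\phi_j$ (eigenfunctions of $\L$). The strict positivity up to the boundary of both principal eigenfunctions, $\alpha_0,\phi_0 \geq c_0>0$, together with the spectral gap $\lambda_1 - \lambda_0 > 0$, then makes $K_{t_0}(x,y)/\PP_x(t_0<\tau)$ uniformly close to $\alpha(y)$ provided $t_0$ is chosen large enough.

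Once (A1) and (A2) are established, \cite[Theorem~2.1]{CV} yields simultaneously the uniqueness of the QSD and the uniform exponential convergence in total variation, uniform in $p_0 \in \P(\bar E)$ (with Assumption~\ref{ass:p0} discarding the trivial case $p_0$ entirely supported on $\partial E$) and in $B \in \mathcal B(E)$. The main technical obstacle I anticipate is precisely the uniformity in $x$ of (A1) when $x$ approaches the degenerate boundary $\partial E$: there, most trajectories are absorbed almost immediately, so one must argue that the rare surviving paths are already distributed approximately according to $\alpha$. This is exactly where the smoothness and strict positivity of the principal eigenfunctions all the way up to $\partial E$, inherited from the limit-circle-non-oscillatory Sturm--Liouville structure underlying Lemma~\ref{lem:prop_p_chalub}, is decisive.
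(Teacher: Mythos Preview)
Your route differs from the paper's and, as written, contains a genuine error. The paper does \emph{not} verify the Doeblin conditions (A1)--(A2) of \cite[Theorem~2.1]{CV}; instead it proves the uniform exponential convergence \eqref{eq:expo_CV_alpha} \emph{directly} from the spectral expansion \eqref{eq:expansion_series_p}. Concretely, one writes $\PP_{p_0}(X_t\in B\mid t<\tau)=\int_B\mathring p_t/\int_E\mathring p_t$, uses the growth bounds $\|\alpha_j\|_\infty\lesssim j^{3/4}$ and $\lambda_j\gtrsim j^2$ from \cite{Chalub_Souza:CMS_2009} to control $\|\mathring p_t-e^{-\lambda_0 t}\hat P_0\alpha_0\|_\infty\leq Ce^{-\lambda_1 t}$ uniformly in $p_0$, and reads off the limit $\alpha(B)$ with remainder $\mathcal O(e^{-(\lambda_1-\lambda_0)t})$. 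This is precisely condition (iv) in \cite[Theorem~2.1]{CV}, and the uniqueness of the QSD together with \eqref{eq:decay_PPalpha_QSD} then follow from that theorem. The paper's argument never singles out $p_0=\delta_x$ and hence never confronts the degenerate limit $x\to\partial E$.

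Your proposed verification of (A1) rests on the claim that the dual principal eigenfunction $\phi_0$ of $\L$ satisfies $\phi_0\geq c_0>0$ up to the boundary. This is false: $\phi_0$ is (a multiple of) the function $\eta$ of Proposition~\ref{prop:exist_eta}, which \emph{vanishes linearly} on $\partial E$. The biorthogonal kernel expansion therefore has leading term $e^{-\lambda_0 t_0}\eta(x)\alpha_0(y)$ with $\eta(x)\to 0$, and likewise $\PP_x(t_0<\tau)\sim e^{-\lambda_0 t_0}\eta(x)\to 0$. The ratio you need for (A1) may well survive because the $\eta(x)$ factors cancel, but establishing this uniformly in $x$ requires controlling the \emph{subleading} terms of both numerator and denominator relative to $\eta(x)$ as $x\to\partial E$, which is precisely the obstacle you flag and which your stated mechanism (positivity of $\phi_0$) does not resolve. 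The paper's direct route sidesteps this issue entirely: the expansion \eqref{eq:expansion_series_p} is written for arbitrary $\mathring p_0\in\P(E)$, and the uniform bounds $|\hat P_j|\leq\|\alpha_j\|_\infty$ hold regardless of whether $p_0$ concentrates near $\partial E$.
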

 It is worth stressing that, as one could have anticipated, the rate $\lambda_0>0$ in \eqref{eq:decay_PPalpha_QSD} is exactly the principal eigenvalue.
 A closer inspection of the proof below will reveal that the exponential convergence rate in \eqref{eq:expo_CV_alpha} is essentially given by the next order gap $\lambda_1-\lambda_0>0$.
 For the existence and uniqueness of QSD's for killed diffusion processes we refer e.g. to \cite{champagnat2018criteria,benaim2021degenerate} and references therein, see also \cite{guillin2020quasi} for more general Feller processes and hypoelliptic diffusions.
 \begin{proof}
Fix $t>0$ and any Borel set $B\in\mathcal B(E)$.
Note that, because $B\cap\partial E=\emptyset$ and by definition of the absorption time $\tau$, $X_t\in B$  implies $t<\tau$ and therefore
 $$
 \PP_{p_0}(X_t\in B\,\vert\,t<\tau)
 =\frac{\PP_{p_0}(\{X_t\in B\}\cap \{t<\tau\})}{\PP_{p_0}(t<\tau)}
 =\frac{\PP_{p_0}(X_t\in B)}{\PP_{p_0}(X_t\not \in \partial E)}
 =\frac{\PP_{p_0}(X_t\in B)}{\PP_{p_0}(X_t\in E)}.
 $$
 Now, since the law $p_t\in \P(\bar E)$ of $X_t$ decomposes as $p_t=\mathring p_t+p^{\partial}_t$ and $B\subset E$, we conclude that
\begin{equation}
\label{eq:recast_proba_QSD}
\PP_{p_0}(X_t\in B\,\vert\,t<\tau)=\frac{\PP_{p_0}(X_t\in B)}{\PP_{p_0}(X_t\in E)}
=\frac{\int_B p_t(\rd x)}{\int_E p_t(\rd x)}
=\frac{\int_B \mathring p_t(\rd x)}{\int_E \mathring p_t(\rd x)}.
\end{equation}
In order to take the limit in this last quotient, we leverage the expansion \eqref{eq:expansion_series_p} of $\mathring p_t$ in order to control
  $$
  \left|\mathring p_t(x)-e^{-\lambda_0t}\hat P_0\alpha_0(x)\right|
  \leq \sum\limits_{j\geq 1}e^{-\lambda_j t}\left|\hat P_j\right|\,\|\alpha_j\|_{\infty}
  $$
  uniformly in $x$.
  Notice that $|\hat P_j|=\left|\int_E\alpha_j(x) \mathring p_0(\rd x)\right|\leq \|\alpha_j\|_\infty \mathring p_0(E)\leq \|\alpha_j\|_\infty$.
  From \cite{Chalub_Souza:CMS_2009} it is known that $\|\alpha_j\|_\infty\lesssim j^{3/4}$ and $\lambda_j \gtrsim j^2$ (only depending on the operator $\Lt$).
  This controls
  $$
  \left|\mathring p_t(x)-e^{-\lambda_0t}\hat P_0\alpha_0(x)\right|\leq e^{-\lambda_1(t-1)}\sum_je^{-C j^2}j^{3/2}\leq Ce^{-\lambda_1 t}
  $$
  for all $t\geq 1$.
  Since all the constants involved at this stage depend only on $\Lt$ through its spectral sequence $(\lambda_j,\alpha_j)$, one has in \eqref{eq:recast_proba_QSD} 
  \begin{multline*}
  \PP_{p_0}(X_t\in B\,\vert\,t<\tau)
  = \frac{e^{-\lambda_0t}\hat P_0 \int_B\alpha_0(x)\rd x + \mathcal O(e^{-\lambda_1 t})}{e^{-\lambda_0t}\hat P_0 \int_E\alpha_0(x)\rd x + \mathcal O(e^{-\lambda_1 t})}
  \\
  =\frac{\int_B\alpha_0(x)\rd x}{\int_E\alpha_0(x)\rd x}
  + \mathcal O\left(|\hat P_0|e^{-(\lambda_1-\lambda_0)t}\right)
  =\alpha(B) + \mathcal O\left(e^{-(\lambda_1-\lambda_0)t}\right),
\end{multline*}
where all the $\mathcal O$ terms are uniform in $p_0, B$.
Here we also used again that $|\hat P_0|\leq \|\alpha_0\|_\infty\leq C$ uniformly in $p_0$ as required.

The fact that this quasi-limiting distribution $\alpha$ is indeed the unique QSD and satisfies \eqref{eq:decay_PPalpha_QSD} readily follows from \cite[Theorem 2.1]{CV} and the proof is complete.
 \end{proof}

An immediate consequence is the existence of a principal eigenpair:
\begin{prop}
\label{prop:exist_eta}
 The QSD $\alpha$ satisfies
 $$
 \Lt\alpha=-\lambda_0\alpha.
 $$
 The limit
 \begin{equation}
  \label{eq:def_eta}
  \eta(x)=\lim\limits_{t\to+\infty}\frac{\PP_x(t<\tau)}{\PP_\alpha(t<\tau)}\geq 0
 \end{equation}
 is finite, converges uniformly in $x\in \bar E$, belongs to the domain $\mathcal D(\L)$, and provides a simple
 eigenfunction
 $$
 \L\eta =-\lambda_0\eta.
 $$
 Moreover $\eta\in C^\infty(\bar E)$ is positive in $E$ and vanishes linearly on the boundary.
\end{prop}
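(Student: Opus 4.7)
The first identity $\Lt\alpha=-\lambda_0\alpha$ is essentially built into the construction: by Theorem~\ref{theo:exist_alpha}, $\alpha$ is proportional to the principal eigenfunction $\alpha_0$ of $\Lt$ identified in Lemma~\ref{lem:prop_p_chalub}, so the claim is immediate. Alternatively, one could derive it from the QSD property $\PP_\alpha(X_t\in B)=e^{-\lambda_0 t}\alpha(B)$: the law of $X_t$ started from $\alpha$ has interior part $\mathring p_t=e^{-\lambda_0 t}\alpha$, and inserting into the forward equation \eqref{eq:FP_with_BC} yields the eigenvalue identity upon identification of the interior.

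To construct $\eta$, the plan is to switch from the forward to the backward Kolmogorov viewpoint. Define the survival function $u(t,x):=\PP_x(t<\tau)$. By the Markov property and standard semigroup theory, $u$ satisfies $\partial_t u=\L u$ on $[0,\infty)\times\bar E$ with initial data $u(0,\cdot)=\1_E$ and Dirichlet lateral condition $u(t,\partial E)=0$ (the process is killed instantly on $\partial E$). The same Sturm--Liouville analysis of \emph{limit-circle-non-oscillatory} type underlying Lemma~\ref{lem:prop_p_chalub} should also produce a dual Dirichlet spectral basis $(\eta_j)_{j\geq 0}$ of $\L$, sharing the same eigenvalues $-\lambda_j$ as $\Lt$, with biorthogonality $\int_E\alpha_i\eta_j\,dx=\delta_{ij}$. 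Equivalently, the killed transition density admits the bilinear expansion $\mathring p_t(x,y)=\sum_j e^{-\lambda_j t}\eta_j(x)\alpha_j(y)$.

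Integrating this kernel over $y\in E$ then gives
\begin{equation*}
u(t,x)=\int_E\mathring p_t(x,y)\,dy=\sum_{j\geq 0}e^{-\lambda_j t}c_j\eta_j(x),\qquad c_j:=\int_E\alpha_j(y)\,dy,
\end{equation*}
and the bound $|c_j|\leq\|\alpha_j\|_\infty\lesssim j^{3/4}$ combined with analogous estimates $\|\eta_j\|_\infty\lesssim j^\beta$ and $\lambda_j\gtrsim j^2$ (mirroring those already leveraged in the proof of Theorem~\ref{theo:exist_alpha}) yields
\begin{equation*}
\bigl|u(t,x)-e^{-\lambda_0 t}c_0\eta_0(x)\bigr|\leq Ce^{-\lambda_1 t}\qquad\text{for }t\geq 1,
\end{equation*}
uniformly in $x\in\bar E$. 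Dividing by $\PP_\alpha(t<\tau)=e^{-\lambda_0 t}$ from \eqref{eq:decay_PPalpha_QSD} then proves the uniform convergence of the ratio in \eqref{eq:def_eta} to $\eta:=c_0\eta_0$, with explicit rate $\mathcal O(e^{-(\lambda_1-\lambda_0)t})$. The normalization $c_0$ is pinned down by integrating against $\alpha$: since $\int u(t,\cdot)\,d\alpha=\PP_\alpha(t<\tau)=e^{-\lambda_0 t}$, one must have $c_0\int\eta_0\,d\alpha=1$. The identity $\L\eta=-\lambda_0\eta$ with $\eta\in\mathcal D(\L)$ is then inherited directly from $\L\eta_0=-\lambda_0\eta_0$.

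Finally, regularity and fine boundary behavior: interior smoothness $\eta\in C^\infty(E)$ is classical elliptic regularity. At each endpoint the one-dimensional eigenvalue ODE $\t\eta''-\t U'\eta'=-\lambda_0\eta$ is of Frobenius type: near $x=0$ where $\t(x)\sim\t'(0)x$, the indicial roots are $r=0,1$, so the local solution space is spanned by smooth branches behaving like $1$ and $x$, and the Dirichlet condition $\eta(0)=0$ selects the linearly vanishing branch. The analysis at $x=1$ is symmetric, yielding $\eta\in C^\infty(\bar E)$ that vanishes linearly on $\partial E$. Positivity $\eta>0$ in $E$ is immediate from the probabilistic representation \eqref{eq:def_eta}, since $\PP_x(t<\tau)>0$ for every $x\in E$ and $t\geq 0$, and this strict positivity passes to the limit after dividing by the positive quantity $\PP_\alpha(t<\tau)$. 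The main obstacle will be the rigorous development of the dual Dirichlet spectral basis $(\eta_j)$ and the corresponding uniform $\|\eta_j\|_\infty$ bounds in the singular weighted setting, which parallel and rely on the technical analysis of $\Lt$ from \cite{Chalub_Souza:CMS_2009} already used in Lemma~\ref{lem:prop_p_chalub}.
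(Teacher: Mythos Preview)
Your approach is correct in outline but differs from the paper's, which is much shorter: once Theorem~\ref{theo:exist_alpha} validates the uniform exponential ergodicity hypothesis of \cite{CV}, the paper simply invokes \cite[Prop.~2.3 and Cor.~2.4]{CV} to obtain existence, domain membership, the eigenfunction property, positivity, boundary vanishing, and simplicity of $\eta$ all at once, and only the boundary \emph{linearity} and $C^\infty$ regularity require a separate ODE argument (carried out there via Kummer and Meijer special functions rather than Frobenius indicial roots, though the two are equivalent). Your route --- constructing a dual Dirichlet spectral basis $(\eta_j)$ for $\L$ and expanding the survival function --- is more self-contained and makes the convergence rate $\mathcal O(e^{-(\lambda_1-\lambda_0)t})$ explicit, at the cost of the technical work you flag at the end (the $\|\eta_j\|_\infty$ bounds in the singular weighted setting). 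Both approaches ultimately rest on the same Sturm--Liouville machinery from \cite{Chalub_Souza:CMS_2009}.

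One genuine gap: your positivity argument is not valid as written. Strict positivity of each ratio $\PP_x(t<\tau)/\PP_\alpha(t<\tau)$ does \emph{not} pass to the limit; a limit of positive numbers can be zero. The correct justification in your framework is that $\eta=c_0\eta_0$ with $c_0=\int_E\alpha_0>0$ (since $\alpha_0\geq c>0$ from Lemma~\ref{lem:prop_p_chalub}) and $\eta_0$, being the principal Dirichlet eigenfunction of a regular Sturm--Liouville problem, has no interior zeros. You should also note that simplicity of the eigenvalue $-\lambda_0$ for $\L$ --- part of the statement --- follows from the same Sturm--Liouville theory once the dual basis is in place, but you do not mention it explicitly.
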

\begin{proof}
First of all, our Theorem~\ref{theo:exist_alpha} allows to apply any result from \cite[section 3]{CV} by validating condition (iv) in Theorem 2.1 therein.
In order to get $\Lt\alpha=-\lambda_0\alpha$, recall that by definition $\alpha$ is simply a renormalization of $\alpha_0$ from Lemma~\ref{lem:prop_p_chalub}, and that the latter was precisely the principal eigenfunction $\Lt \alpha_0=-\lambda_0\alpha_0$ from the Sturm-Liouville spectral analysis in \cite{Chalub_Souza:CMS_2009}.
 This would also have followed from the well-known fact \cite{meleard2012quasi} that a QSD always satisfies $P_t^*\alpha=e^{-\lambda_0 t}\alpha$, which entails the claim by simple differentiation in time at $t=0^+$.
 
 That $\eta>0$ in \eqref{eq:def_eta} belongs to the domain, is an eigenfunction $\L\eta=-\lambda_0\eta$, and vanishes at the boundary follows from \cite[Prop. 2.3]{CV}.
 The multiplicity one is exactly the content of \cite[Cor. 2.4]{CV}.
 
 Observe next that $\eta\in \mathcal D(\L)$ is at least continuous up to the boundary.
 Usual local elliptic regularity for the elliptic equation $\Lt\eta=-\lambda_0\eta$ guarantees $C^\infty$ regularity in the interior.
As for the boundary regularity we only discuss here $x=0^+$, the right boundary $x=1^-$ can be handled in the exact same way.
A close inspection of the asymptotic behaviour for the ODE $\L\eta+\lambda_0\eta=0$ shows that solutions all behave, for $x\to 0^+$, as linear combinations $\eta(x)=C_1 x F(x) + C_2 G(x)$ of a suitable Kummer confluent hypergeometric function $F$ and a Meijer $G$-function $G$.
The former $xF(x)$ is smooth up to the boundary and vanishes linearly, while the latter has non-trivial limit $G(0)\neq 0$.
But then $\eta(0)=0$ implies $C_2=0$.
This simultaneously entails the full boundary regularity and the linear behaviour, and the proof is complete.
\end{proof}
\subsection{Conditionned dynamics}
Roughly speaking, the $Q$-process is obtained by conditioning the previous Kimura process to non-absorption $\tau=+\infty$.
However, since $\tau<\infty$ almost surely, this conditioning is too singular and one needs to work out a little more.
\begin{theo}
\label{theo:Qprocess}
Let $p_0\in \P(\bar E)$ be any initial distribution, and define
\begin{equation}
\label{eq:def_q_0_from_p0}
q_0(\rd x)\coloneqq \frac{\eta(x)p_0(\rd x)}{\int_E \eta(y)p_0(\rd y)}\in \P(E).
\end{equation}
 Then
 \begin{equation}
 \label{eq:def_QQs}
  \QQ_{q_0}(A_s)\coloneqq \lim\limits_{t\to\infty} \PP_{p_0}(A_s \vert t<\tau),
 \hspace{1cm}
 \forall A_s\in\F_s
 \end{equation}
 is a well-defined probability measure, with moreover
 \begin{equation}
 \label{eq:dPP/dQQ}
 \left.\frac{\rd \QQ_{q_0}}{\rd\PP_{p_0}}\right|_{\F_s}
 =\frac{e^{\lambda_0 s}\eta(X_s)}{\int_E \eta(y){p_0}(\rd y)}
 \eqqcolon M_s
 \end{equation}
 The conditioned semi-group is
 \begin{equation}
 \label{eq:Pttilde}
 \tilde P_t f(x)=\frac{e^{\lambda_0t}}{\eta(x)}P_t(\eta f)(x).
 \end{equation}
 Denoting $\QQ_x\coloneqq \QQ_{\delta_x}$ the corresponding measure obtained by taking $p_0=q_0=\delta_x$, the $Q$-process $({\bar E}, (\F_t)_{t\geq 0},(X_t)_{t\geq 0},(\tilde P_t)_{t\geq 0}, (\QQ_x)_{x\in E})$ is an $E$-valued strong Markov process with generator
 \begin{equation}
 \label{eq:def_generator_Ltilde}
 \tilde \L f=\lambda_0 f +\frac{\L(\eta f)}{\eta}.
 \end{equation}
 Finally, for any $s\geq 0$, the law $q_s\in\P(E)$ of $X_s$ under $ \QQ_{q_0}$ is given by
 \begin{equation}
 \label{eq:qs_from_ps}
 q_s(\rd x)=\frac{\eta(x)p_s(\rd x)}{\int_E \eta(y)p_s(\rd y)},
 \end{equation}
 where $p_s\in \P(\bar E)$ denotes the law of $X_s$ under the unconditioned measure $\PP_{p_0}$.
\end{theo}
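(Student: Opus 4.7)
The core observation is that Doob's $h$-transform with the principal eigenfunction $\eta$ from Proposition~\ref{prop:exist_eta} should yield the $Q$-process, so the plan is to first establish the limit in \eqref{eq:def_QQs} and identify it with the $\eta$-transform. Fix $A_s\in\F_s$ and any $t>s$. By the Markov property at time $s$,
\begin{equation*}
\PP_{p_0}(A_s\cap\{t<\tau\})=\EE_{p_0}\!\left[\1_{A_s}\,\PP_{X_s}(t-s<\tau)\right],
\qquad
\PP_{p_0}(t<\tau)=\int_{\bar E}\PP_x(t<\tau)\,p_0(\rd x).
\end{equation*}
By Proposition~\ref{prop:exist_eta}, $\PP_x(r<\tau)/\PP_\alpha(r<\tau)\to\eta(x)$ uniformly in $x\in\bar E$ as $r\to\infty$, and by \eqref{eq:decay_PPalpha_QSD} one has $\PP_\alpha(r<\tau)=e^{-\lambda_0 r}$. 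Dividing numerator and denominator by $\PP_\alpha(t-s<\tau)=e^{-\lambda_0(t-s)}$ and $\PP_\alpha(t<\tau)=e^{-\lambda_0 t}$ respectively, dominated convergence (justified by uniformity) yields
\begin{equation*}
\PP_{p_0}(A_s\,\vert\,t<\tau)\xrightarrow[t\to\infty]{}\frac{e^{\lambda_0 s}\,\EE_{p_0}\!\left[\1_{A_s}\eta(X_s)\right]}{\int_{\bar E}\eta(y)\,p_0(\rd y)}=\EE_{p_0}[\1_{A_s}M_s],
\end{equation*}
which gives both the existence of the limit and the Radon-Nikodym expression \eqref{eq:dPP/dQQ}, provided $\int\eta\,p_0=\int_E\eta\,p_0>0$, which holds by Assumption~\ref{ass:p0} and the fact that $\eta>0$ on $E$.

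\textbf{Consistency and strong Markov property.} The next step is to check that $(M_s)_{s\geq 0}$ is a $(\PP_{p_0},\F_s)$-martingale: applying the Markov property, $\EE_{p_0}[M_{s+r}\,\vert\,\F_s]=\frac{e^{\lambda_0(s+r)}}{\int\eta\,p_0}\EE_{X_s}[\eta(X_r)]=\frac{e^{\lambda_0(s+r)}}{\int\eta\,p_0}(P_r\eta)(X_s)$, and since $\eta$ is an eigenfunction with eigenvalue $-\lambda_0$ for $\L$ we have $P_r\eta=e^{-\lambda_0 r}\eta$, hence $\EE_{p_0}[M_{s+r}\,\vert\,\F_s]=M_s$. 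In particular $\EE_{p_0}[M_s]=1$, so the formula $\QQ_{q_0}(A_s)\coloneqq \EE_{p_0}[\1_{A_s}M_s]$ defines a consistent family of probability measures on each $\F_s$, and Kolmogorov's extension theorem produces the process measure $\QQ_{q_0}$ on $\F_\infty$. Taking $p_0=\delta_x$ gives $\int\eta\,p_0=\eta(x)$ and $\QQ_x$-a.s.\ paths stay in $E$ (since $M_s=0$ on $\{X_s\in\partial E\}$), so the process is $E$-valued. The strong Markov property of $(X_t,\QQ_x)$ follows from that of $(X_t,\PP_x)$ combined with the multiplicativity $M_{s+r}/M_s=e^{\lambda_0 r}\eta(X_{s+r})/\eta(X_s)$, which is the standard $h$-transform shift argument.

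\textbf{Semi-group and generator.} For $f\in C(\bar E)$, unraveling the definition gives
\begin{equation*}
\tilde P_t f(x)=\EE^{\QQ_x}\!\left[f(X_t)\right]=\EE^{\PP_x}\!\left[M_t f(X_t)\right]=\frac{e^{\lambda_0 t}}{\eta(x)}\EE^{\PP_x}\!\left[\eta(X_t)f(X_t)\right]=\frac{e^{\lambda_0 t}}{\eta(x)}P_t(\eta f)(x),
\end{equation*}
which is \eqref{eq:Pttilde}. Differentiating at $t=0^+$ for $f\in C^2(\bar E)$ with $\eta f\in\mathcal D(\L)$,
\begin{equation*}
\tilde \L f(x)=\lambda_0 f(x)+\frac{1}{\eta(x)}\left.\frac{d}{dt}P_t(\eta f)\right|_{t=0}(x)=\lambda_0 f(x)+\frac{\L(\eta f)(x)}{\eta(x)},
\end{equation*}
which is \eqref{eq:def_generator_Ltilde}.

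\textbf{Law at time $s$ and main obstacle.} For the last formula \eqref{eq:qs_from_ps}, pick $A_s=\{X_s\in B\}$ to compute
\begin{equation*}
\QQ_{q_0}(X_s\in B)=\frac{e^{\lambda_0 s}\int_B\eta(x)\,p_s(\rd x)}{\int_E\eta(y)\,p_0(\rd y)},
\end{equation*}
and then invoke the martingale identity $\EE_{p_0}[M_s]=1$ in the form $\int_E\eta\,p_s=e^{-\lambda_0 s}\int_E\eta\,p_0$ (which can also be read off the spectral expansion in Lemma~\ref{lem:prop_p_chalub}) to rewrite the denominator and conclude. The main technical obstacle in the whole argument is establishing the $t\to\infty$ limit in the first paragraph with sufficient uniformity in $p_0$ and in $A_s$: the uniform convergence $\PP_x(r<\tau)/\PP_\alpha(r<\tau)\to\eta(x)$ from Proposition~\ref{prop:exist_eta} is indispensable, as merely pointwise convergence would not suffice to pass to the limit inside the expectation $\EE_{p_0}[\1_{A_s}\cdot]$ against arbitrary $p_0$; once this is secured, the remaining steps are essentially algebraic manipulations exploiting the eigen-relation $P_t\eta=e^{-\lambda_0 t}\eta$.
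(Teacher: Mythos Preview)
Your proof is correct and follows essentially the same route as the paper: both use the Markov property at time $s$, the uniform asymptotics $\PP_x(r<\tau)\sim e^{-\lambda_0 r}\eta(x)$ from Proposition~\ref{prop:exist_eta}, and the eigen-relation $P_t\eta=e^{-\lambda_0 t}\eta$ to identify the limit as an $\eta$-Doob transform, and then compute $q_s$ by taking $A_s=\{X_s\in B\}$. The only cosmetic differences are that the paper keeps the indicator $\1_{s<\tau}$ explicitly before absorbing it into $\eta(X_s)$ (whereas you implicitly use $\PP_x(r<\tau)=0$ for $x\in\partial E$), and that the paper outsources the martingale/extension step to \cite{roynette2006some} and the strong Markov/semi-group identification to \cite{CV}, while you verify these directly.
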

Note that, since $\eta$ vanishes on the boundary, we have $\eta(x) p_s(\rd x)=\eta(x)\mathring p_s(\rd x)$ and in particular $q_s$ does not charge the boundary $\partial E$.
This justifies writing $q_s\in \P(E)$ instead of $\P(\bar E)$, and is also of course consistent with the fact that our conditioning to ``$\tau=+\infty$'' precisely prevents $X_t$ from hitting the boundary (when viewed from the $ \QQ_{q_0}$ perspective, not from $\PP_{p_0}$).
It is also worth pointing out that, owing to \eqref{eq:qs_from_ps}, the initial distribution of $X_0$ under the conditioned measure is really $q_0$ as defined in \eqref{eq:def_q_0_from_p0}.
This is of course consistent with our notation $\QQ_{q_0}$ in \eqref{eq:def_QQs} as well as the customary relation $\QQ_{q_0}=\int \QQ_x \,q_0(\rd x)$.
\begin{proof}
The proof (and the result) is a minor extension of \cite[Theorem 3.1]{CV}, where the same statement was proved only for $p_0=\delta_x$.
For the reader's convenience we only reproduce the main steps, slightly adapted in order to cover the case of arbitrary initial distributions $p_0$.

 Write $\Gamma_t\coloneqq \1_{t<\tau}$ and let
 $$
 \QQ^{t}_{q_0}\coloneqq \frac{\Gamma_t}{\EE_{p_0}(\Gamma_t)}\PP_{p_0}.
 $$
 We need to show that $\QQ^t_{p_0}$ has a proper limit, whose density with respect to $\PP_{p_0}$ restricted to $\F_s$ is precisely $M_s$.
 By the Markov property, and because by definition $\PP_{p_0}=\int \PP_y\,{p_0}(\rd y)$, we have first for any $t\geq s$
 $$
 \frac{\EE_{{p_0}}(\Gamma_t\vert \F_s)}{\EE_{p_0}(\Gamma_t)}
 =\frac{\1_{s<\tau}\PP_{X_s}(t-s<\tau)}{\PP_{p_0}(t<\tau)}
 =\frac{\1_{s<\tau}\PP_{X_s}(t-s<\tau)}{\int_E\PP_y(t<\tau){p_0}(\rd y)}.
 $$
 By Proposition \ref{prop:exist_eta} we have
 $$
 \PP_x(t<\tau)\underset{t\to\infty}{\sim}e^{-\lambda_0t}\eta(x)
 $$
 uniformly in $x\in \bar E$.
 As a consequence $\PP_{X_s}(t-s<\tau)\sim e^{-\lambda_0(t-s)}\eta(X_s)$ almost surely in the numerator, while by dominated convergence
 $ \int_E\PP_y(t<\tau){p_0}(\rd y)
 \sim
 \int_E e^{-\lambda_0 t}\eta(y){p_0}(\rd y)
 $
 in the denominator.
 As a consequence
 $$
 \frac{\EE_{{p_0}}(\Gamma_t\vert \F_s)}{\EE_{p_0}(\Gamma_t)}
 \xrightarrow[t\to\infty]{} M_s \eqqcolon
 \frac{\1_{s<\tau }e^{\lambda_0 s}\eta(X_s)}{\int_E \eta(y){p_0}(\rd y)}
 $$
 almost surely.
 By dominated convergence we also get that
 $$
 \EE_{p_0}(M_s)
 =\lim\limits_{t\to\infty}\EE_{p_0}\left(\frac{\EE_{{p_0}}(\Gamma_t\vert \F_s)}{\EE_{p_0}(\Gamma_t)}\right)
 = 1.
 $$
 By \cite[Theorem 2.1]{roynette2006some} this implies that $M_s$ is a martingale under $\PP_{p_0}$ and that
 $$
 \QQ^{t}_{q_0}(A_s)\xrightarrow[t\to\infty]{} \EE_{p_0}(M_s \1_{A_s})
 $$
 for any $A_s\in \F_s$.
 This implies that $\QQ_{q_0} \coloneqq \lim \QQ^t_{p_0}$ is well-defined, and moreover
 $$
 \QQ_{q_0}(A_s)=\EE_{p_0}(M_s \1_{A_s}) 
 =\int_{A_s} M_s(\omega)\, \PP_{p_0}(\rd\omega)
 \quad\Rightarrow\quad
 \left.\frac{\rd \QQ_{q_0}}{\rd\PP_{p_0}}\right|_{\F_s}=M_s.
 $$
 Notice moreover that
 $
 \1_{s<\tau}=\1_{X_s\in E}
 $.
 Recalling that $\eta$ vanishes on $\partial E$, we thus have that $ \1_{s<\tau}\eta(X_s)=\eta(X_s)$
 and therefore
 $$
 M_s=\frac{\1_{s<\tau }e^{\lambda_0 s}\eta(X_s)}{\int_E \eta(y){p_0}(\rd y)}
 =
 \frac{e^{\lambda_0 s}\eta(X_s)}{\int_E \eta(y){p_0}(\rd y)}
 $$
 as claimed.
 The explicit characterizations \eqref{eq:Pttilde} and \eqref{eq:def_generator_Ltilde} of $\tilde P_t$ and $\tilde \L$ immediately follow from from this expression for $\frac{\rd \QQ}{\rd \PP}$, see also \cite[Theorems 3.1 and 3.2]{CV}.
 The fact that the conditioned process is strong Markov (in fact, Feller continuous) is proved exactly as in \cite{CV}.

 In order to compute now the law $q_s$ of $X_s$ under the conditioned measure $ \QQ_{q_0}$, take $A_s=\{X_s\in B\}$ for any Borel set $B\in\mathcal B(E)$.
 Because $A_s\in\F_s$ we have from the previous computation
 $$
 q_s(B)= \QQ_{q_0}(X_s\in B)
 =\int_{A_s}M_s(\omega)\,\PP_{p_0}(\rd\omega).
 $$
 Substituting for $M_s$ from the previous equality readily gives
 $$
 q_s(B)
 =
 \frac{e^{\lambda_0 s}}{\int_E\eta(y){p_0}(\rd y)}\int_{X_s\in B}\eta(X_s(\omega))\PP_{p_0}(\rd\omega)
 =
 \frac{e^{\lambda_0 s}}{\int_E\eta(y){p_0}(\rd y)}\int_{ B}\eta(x)p_s(\rd x),
 $$
 which means exactly
 $$
 q_s(\rd x)=\frac{e^{\lambda_0 s}}{\int_E\eta(y){p_0}(\rd y)}\eta(x)p_s(\rd x).
 $$
 Our claim \eqref{eq:qs_from_ps} finally follows from the fact that $\eta$ is an eigenfunction, resulting in
 $$
 \frac{d}{ds}\int _{\bar E}\eta(x) p_s(\rd x)
 = \int_{\bar E}\eta(x) \Lt p_s(\rd x)
 = \int_{\bar E}\L\eta(x) p_s(\rd x)
 =-\lambda_0 \int_{\bar E}\eta(x) p_s(\rd x)
 $$
 and therefore $\int_E\eta(y){p_0}(\rd y)=  e^{\lambda_0 s}\int_E\eta(y)p_s(\rd y)$ in the previous denominator.
\end{proof}
In more tractable SDE terms, the conditioned process is characterized by
\begin{cor}
The probability measure
\begin{equation}
 \label{eq:def_pi}
 \pi(\rd x)\coloneqq \frac{1}{\int_E\eta(y)\alpha(\rd y)}\eta(x)\alpha(\rd x)
 \qquad \in \P(E)
\end{equation}
is the unique stationary distribution of the conditioned $Q$-process.
The conditioned generator is given by
\begin{equation}
\label{eq:Ltilde_explicit}
\tilde \L f(x) =\t\left[\Delta f(x)+\left(-\nabla U + 2\frac{\nabla\eta}{\eta}\right)\cdot \nabla f(x)\right],
\hspace{1cm}
x\in E,
\end{equation}
and the conditioned dynamics is given by
\begin{equation}
\label{eq:SDE_q}
\rd X_t =\sqrt{2\t(X_t)}\rd \tilde B_t +\t(X_t)\left(-\nabla U(X_t) + 2\frac{\nabla\eta(X_t)}{\eta(X_t)}\right)\rd t
\end{equation}
under $\QQ_{q_0}$ for any $q_0\in \P(E)$.
(Here $\tilde B_t$ is a standard Brownian motion.)
\end{cor}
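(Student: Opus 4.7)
The two identities for $\tilde\L$ and the SDE are direct computations from Theorem~\ref{theo:Qprocess}, while the characterization of $\pi$ follows by combining the quasi-ergodicity from Theorem~\ref{theo:exist_alpha} with the explicit formula \eqref{eq:qs_from_ps}. I will proceed in three steps.

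\paragraph{Step 1: explicit form of $\tilde\L$ and the SDE.}
Starting from $\tilde\L f=\lambda_0 f+\L(\eta f)/\eta$ given in \eqref{eq:def_generator_Ltilde}, I expand using Leibniz:
$\Delta(\eta f)=f\Delta\eta+2\nabla\eta\cdot\nabla f+\eta\Delta f$ and $\nabla(\eta f)=f\nabla\eta+\eta\nabla f$.
Plugging into $\L(\eta f)=\t\Delta(\eta f)-\t\nabla U\cdot\nabla(\eta f)$ and collecting the terms proportional to $f$ yields $f\,\L\eta=-\lambda_0\eta f$ by Proposition~\ref{prop:exist_eta}. After dividing by $\eta$, the $\lambda_0 f$ contributions cancel and I am left with
$
\tilde\L f(x)=\t(x)\bigl[\Delta f(x)+(-\nabla U(x)+2\nabla\eta(x)/\eta(x))\cdot\nabla f(x)\bigr],
$
which is \eqref{eq:Ltilde_explicit}. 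Since $\tilde\L$ is a second-order elliptic operator with smooth coefficients on $E$ (recall $\eta>0$ on $E$ by Proposition~\ref{prop:exist_eta}), I identify the diffusion coefficient $2\t$ and the drift $\t(-\nabla U+2\nabla\eta/\eta)$, giving the SDE \eqref{eq:SDE_q}. Strong existence and uniqueness on any time interval before the process hits $\partial E$ follows from standard one-dimensional theory as in \cite{karatzas2012brownian}; non-explosion is guaranteed by the logarithmic blow-up of $\nabla\eta/\eta$ pushing the process back into $E$, consistent with the fact that under $\QQ_{q_0}$ the absorption time satisfies $\QQ_{q_0}(\tau=\infty)=1$.

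\paragraph{Step 2: stationarity of $\pi$.}
I verify that the specific initial datum $p_0=\alpha$, which corresponds via \eqref{eq:def_q_0_from_p0} to $q_0=\pi$, yields $q_s=\pi$ for all $s\geq 0$. Indeed, since $\alpha$ is a QSD, Theorem~\ref{theo:exist_alpha} gives $\PP_\alpha(X_s\in\cdot\mid s<\tau)=\alpha$ and $\PP_\alpha(s<\tau)=e^{-\lambda_0 s}$, so the interior part of $p_s$ under $\PP_\alpha$ is $\mathring p_s=e^{-\lambda_0 s}\alpha$. Plugging this into \eqref{eq:qs_from_ps} gives
$
q_s(\rd x)=\frac{\eta(x)\,e^{-\lambda_0 s}\alpha(\rd x)}{\int_E\eta(y)\,e^{-\lambda_0 s}\alpha(\rd y)}=\pi(\rd x),
$
so $\pi$ is invariant for the conditioned semigroup $\tilde P_s$.

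\paragraph{Step 3: uniqueness via quasi-ergodicity.}
For any initial $p_0\in\P(\bar E)$ (satisfying Assumption~\ref{ass:p0} so that $\int\eta\,\rd p_0>0$), the spectral expansion \eqref{eq:expansion_series_p} gives $\mathring p_s(x)=e^{-\lambda_0 s}\hat P_0\alpha_0(x)+\mathcal O(e^{-\lambda_1 s})$ uniformly in $x$, with $\hat P_0=\int\alpha_0\,\rd p_0>0$. Substituting into \eqref{eq:qs_from_ps},
\begin{equation*}
q_s(\rd x)=\frac{\eta(x)\alpha_0(x)\rd x+\mathcal O(e^{-(\lambda_1-\lambda_0)s})}{\int_E\eta(y)\alpha_0(y)\rd y+\mathcal O(e^{-(\lambda_1-\lambda_0)s})}\xrightarrow[s\to\infty]{}\pi
\end{equation*}
narrowly and in fact exponentially fast. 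Now if $\pi'\in\P(E)$ is any stationary distribution of the $Q$-process, I may take $p_0(\rd x)=\eta(x)^{-1}\pi'(\rd x)$ (properly normalized); this is a well-defined measure on $E$ since $\eta>0$ there, and Assumption~\ref{ass:p0} is automatic. Then by construction $q_0=\pi'$, and stationarity of $\pi'$ gives $q_s\equiv\pi'$ for all $s$, whereas the previous convergence forces $q_s\to\pi$. Hence $\pi'=\pi$, which concludes uniqueness.

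\paragraph{Main obstacle.}
The only delicate point is ensuring that the candidate $p_0=\eta^{-1}\pi'$ used in the uniqueness argument is a legitimate finite measure on $\bar E$, which requires controlling $1/\eta$ near the boundary. Since Proposition~\ref{prop:exist_eta} says $\eta$ vanishes \emph{linearly} on $\partial E$, the integral $\int_E\eta^{-1}\rd\pi'$ could a priori diverge; however, this is harmless because the argument only needs $\int\eta\rd p_0=\pi'(E)=1<\infty$, so the normalization in \eqref{eq:def_q_0_from_p0} is well-defined even without total finiteness of $p_0$. Equivalently, one can bypass this by noting directly that $\pi'$-stationarity means $\int \tilde\L f\,\rd\pi'=0$ for smooth $f$, and then invoking the uniqueness of the invariant measure of the nondegenerate elliptic operator \eqref{eq:Ltilde_explicit} on $E$ with the inward-pointing log-drift $2\nabla\eta/\eta$.
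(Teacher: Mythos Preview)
Your Steps~1 and~2 match the paper's proof essentially verbatim: the paper expands $\tilde\L f = \lambda_0 f + \eta^{-1}\L(\eta f)$ via Leibniz, cancels the $\lambda_0 f$ terms using $\L\eta=-\lambda_0\eta$, and reads off the SDE; for stationarity it takes $p_0=\alpha$ and checks $q_s\equiv\pi$ exactly as you do.

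For uniqueness the paper simply defers to \cite[Theorem~3.1]{CV}, whereas you attempt a self-contained argument via the spectral expansion. That argument has a real gap which your ``main obstacle'' paragraph does not close: formula~\eqref{eq:qs_from_ps} and the expansion~\eqref{eq:expansion_series_p} were established only for $p_0\in\P(\bar E)$, and your candidate $p_0\propto\eta^{-1}\pi'$ need not be a finite measure. Your claim that ``the argument only needs $\int\eta\,\rd p_0<\infty$'' does not repair this, because already the leading coefficient $\hat P_0=\int_E\alpha_0\,\rd\mathring p_0$ in~\eqref{eq:expansion_series_p} involves $\alpha_0\geq c_0>0$ (Lemma~\ref{lem:prop_p_chalub}) and therefore diverges whenever $\mathring p_0(E)=\infty$; the expansion then has no meaning and you cannot substitute into~\eqref{eq:qs_from_ps}. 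A clean fix staying within your framework: run your convergence argument with $p_0=\delta_x$ for each interior $x\in E$, which is a legitimate element of $\P(\bar E)$ and gives $q_0=\delta_x$ via~\eqref{eq:def_q_0_from_p0}. This yields $\tilde P_s^*\delta_x\to\pi$ narrowly for every $x\in E$; integrating against the putative stationary $\pi'$ and using dominated convergence then gives $\pi'=\tilde P_s^*\pi'\to\pi$, hence $\pi'=\pi$.
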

The main point here is that, compared to \eqref{eq:SDE_Kimura}, the conditioning induces an additional logarithmic drift $2\nabla\log\eta=2\frac{\nabla\eta}{\eta}$ rendering the boundaries strongly repulsive.
Indeed, since $\eta$ vanishes linearly at both endpoints we have that $\frac{\nabla\eta}{\eta}$ blows-up  and eventually overtakes both the $-\nabla U$ drift and the Brownian fluctuation.
In particular particles never reach the boundary, as they should since the $Q$-process was precisely conditioned to never being absorbed, and therefore only takes values in $E$ (and not in $\bar E$).
More rigorously: from \eqref{eq:dPP/dQQ} it is not difficult to show that $\tau=+\infty$, $\QQ$-almost surely, and therefore no particular boundary conditions are needed in \eqref{eq:SDE_q} (although we will see later on that at the PDE level the corresponding Fokker-Planck equation should be interpreted with Neumann boundary conditions -- see Remark~\ref{rmk:Dir_Neum_BCs}, so the SDE can also be written more rigorously with reflection if really needed).
This also explains why, contrarily to \eqref{eq:def_L}, the new generator $\tilde \L$ only needs to be defined for $x\in E$ in \eqref{eq:Ltilde_explicit}, and not $x\in \bar E$.

\begin{proof}
 The fact that $\pi$ is stationary follows from \cite[Theorem 3.1]{CV}.
 Alternatively this can be checked by hand, simply choosing $p_0=\alpha$ in \eqref{eq:qs_from_ps}.
 Indeed in this case $q_0=\pi$, and since $\alpha $ is an eigenfunction we have $\mathring p_t=e^{-\lambda_0 t}\alpha$ whence $\eta p_t=\eta\mathring p_t=e^{-\lambda_0 t}\eta\alpha$ and
 $
 q_t=\frac{\eta e^{-\lambda_0 t}\alpha}{\int _E \eta e^{-\lambda_0 t}\alpha}
 =\frac{1}{\int_E \eta \alpha}\eta \alpha
 =\pi
 $.
 From \eqref{eq:def_generator_Ltilde} we compute
 \begin{multline*}
  \tilde \L f=\lambda_0 f+\frac 1\eta\L(\eta f)
  =\lambda_0 f + \frac 1\eta \t\left[\Delta(\eta f)-\nabla U\cdot \nabla(\eta f)\right]
  \\
  =\lambda_0 f + \frac 1\eta \t\left[\Big(\eta\Delta f + 2\nabla\eta\cdot \nabla f + f\Delta\eta\Big)-\nabla U\cdot \Big(\eta\nabla f + f \nabla\eta\Big)\right]\\
  =\lambda_0 f +\frac{f}{\eta}\underbrace{\t\left[\Delta \eta-\nabla U\cdot \nabla\eta\right]}_{\L\eta=-\lambda_0\eta}
   + \t\Big[\Delta f +\left(-\nabla U +2\frac{\nabla\eta}{\eta}\right)\cdot \nabla f\Big].
 \end{multline*}
This entails \eqref{eq:Ltilde_explicit} and the SDE \eqref{eq:SDE_q} immediately follows.
\end{proof}
A more variational and PDE-oriented characterization of the conditioning is as follows:
\begin{cor}
 Let
\begin{equation}
\label{eq:def_Utilde_V}
\tilde U\coloneqq U -2\log\eta
\qqtext{and}
V\coloneqq \tilde U+\log\t
\end{equation}
Then $\frac{e^{-\tilde U}}{\t}=e^{-V}\in  L^1(\rd x)$, the stationary measure $\pi=\eta\alpha$ can be written as the Gibbs distribution
\begin{equation}
\label{eq:pi_Gibbs}
\pi(\rd x)
=\frac{1}{\mathcal Z}\frac{e^{-\tilde U(x)}}{\t(x)}\rd x
=\frac{1}{\mathcal Z}e^{V(x)}\rd x
,
\end{equation}
and the adjoint generator reads
\begin{equation}
 \label{eq:Ltilde*_explicit}
\widetilde \L^* q
=\dive(\t \nabla q) +\dive(\t q\nabla V).
\end{equation}
\end{cor}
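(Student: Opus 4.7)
The strategy is to proceed in the order integrability, adjoint formula, Gibbs identification, since once the divergence form of $\widetilde{\mathcal{L}}^*$ is known the stationary measure can be read off by inspection. The key observation underlying everything is that the conditioned drift in \eqref{eq:Ltilde_explicit} is exactly $-\nabla U + 2\nabla\eta/\eta = -\nabla(U - 2\log\eta) = -\nabla\tilde U$, so $\tilde{\mathcal L}f = \theta(\Delta f - \nabla\tilde U\cdot\nabla f)$. This puts the conditioned process in standard Fokker-Planck form with smooth potential $\tilde U$ (albeit blowing up at $\partial E$) and diffusion $\theta$.

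\textbf{Integrability.} Unpacking the definitions, $e^{-V} = e^{-\tilde U}/\theta = e^{-U}\eta^2/\theta$. By Assumption~\ref{ass:theta} and Proposition~\ref{prop:exist_eta}, both $\theta$ and $\eta$ vanish linearly at the endpoints $x=0,1$, so $\eta^2/\theta$ is bounded (in fact vanishes linearly) near $\partial E$ and is smooth positive inside; combined with the boundedness of $e^{-U}$ on $\bar E$, this gives $e^{-V}\in L^1(E)$, hence also $e^{-\tilde U}/\theta\in L^1$.

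\textbf{Adjoint formula.} Starting from $\tilde{\mathcal L}f = \theta(\Delta f - \nabla\tilde U\cdot\nabla f)$, I would compute $\widetilde{\mathcal L}^*$ by two integrations by parts in the interior, handling the boundary contributions as was done earlier in the section for $\mathcal L^*$ (they vanish because $\theta$ has simple zeros at $\partial E$, and more fundamentally because the $Q$-process never reaches $\partial E$, cf.\ Remark~\ref{rmk:Dir_Neum_BCs}). The standard calculation yields $\widetilde{\mathcal L}^* q = \Delta(\theta q) + \operatorname{div}(\theta q\nabla\tilde U)$. Rewriting $\Delta(\theta q) = \operatorname{div}(\theta\nabla q) + \operatorname{div}(q\nabla\theta)$ and factoring,
$$
\widetilde{\mathcal L}^* q = \operatorname{div}(\theta\nabla q) + \operatorname{div}\bigl(\theta q(\nabla\tilde U + \nabla\theta/\theta)\bigr) = \operatorname{div}(\theta\nabla q) + \operatorname{div}(\theta q\nabla V),
$$
using $V = \tilde U + \log\theta$, which is \eqref{eq:Ltilde*_explicit}.

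\textbf{Identification of $\pi$.} The adjoint generator can be put in flux form $\widetilde{\mathcal L}^* q = \operatorname{div}\bigl(\theta e^{-V}\nabla(q e^V)\bigr)$. The probability measure $\mu(\rd x) = \mathcal Z^{-1}e^{-V(x)}\rd x$ (well-defined by the integrability step) satisfies $\mu e^V \equiv \mathcal Z^{-1}$, so the flux vanishes identically and $\widetilde{\mathcal L}^*\mu = 0$, i.e.\ $\mu$ is stationary for the $Q$-process. By the uniqueness of the stationary distribution established in the previous corollary, $\pi = \mu$, which gives \eqref{eq:pi_Gibbs}. Alternatively, and more concretely, one can verify $\pi = \eta\alpha/\int\eta\alpha$ directly by showing $\alpha_0 = C\,e^{-U}\eta/\theta$: plugging this ansatz into $\mathcal L^*$ (which by the earlier computation is $\Delta(\theta\cdot) + \operatorname{div}(\theta\cdot\nabla U)$ in the interior) reduces the eigenvalue equation $\mathcal L^*\alpha_0 = -\lambda_0\alpha_0$ to exactly $\mathcal L\eta = -\lambda_0\eta$, and simplicity of $\lambda_0$ forces proportionality.

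The only step requiring some care is the adjoint computation, where one must verify that the boundary integration-by-parts contributions indeed drop out; this is completely analogous to the $\mathcal L^*$ computation already carried out in Section~\ref{sec:Kimura_Qprocess} and relies on the simple vanishing of $\theta$ at $\partial E$. Everything else is essentially bookkeeping between the four potentials $U,\tilde U,V,W$ listed on the roadmap.
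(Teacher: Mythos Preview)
Your proof is correct and follows essentially the same route as the paper: both establish integrability from the linear vanishing of $\eta$ and $\theta$, compute $\widetilde{\mathcal L}^*$ by writing $\tilde{\mathcal L}f=\theta(\Delta f-\nabla\tilde U\cdot\nabla f)$ and taking adjoints, and identify $\pi$ by checking that $e^{-V}$ is stationary and invoking uniqueness. Your flux form $\operatorname{div}(\theta e^{-V}\nabla(qe^{V}))$ is just an algebraic rewriting of the paper's $\operatorname{div}(\theta q\nabla\log(q/e^{-V}))$, and your alternative verification via $\alpha_0=Ce^{-U}\eta/\theta$ is a nice extra check not present in the paper.
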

The previous repulsive effect of the conditioning should also be observed at this energetic level:
Since $\pi(\rd x)=\eta(x)\alpha(\rd x)$ vanishes linearly at the boundaries the new effective potential
$$
V=\tilde U + \log \t 
=cst-\log\pi =  \to +\infty
$$
blows-up logarithmically, while previously $\hat U=U+\log\t\to -\infty$ was blowing \emph{down} as anticipated in \eqref{eq:behaviour_all_potentials}.
Before conditioning, the boundaries were energetically attracting the dynamics towards $\hat U(\partial E)=-\infty$, while after conditioning, the $-2\log\eta$ term overtakes $\log \t$ and the boundaries become repulsive $\tilde U(\partial E)=+\infty$ (recall once again that both $\eta$ and $\t$ vanish linearly, while $U$ is at least bounded).
The law $q_s(\rd x)=Ce^{-\lambda_0 s} \eta(x) p_s(\rd x)$ accordingly does not charge $\partial E$.

\begin{proof}
Note first that $\eta,\t$ are smooth up to the boundary, where both vanish linearly.
As a consequence $\frac{e^{-\tilde U}}{\t}=\frac{e^{-U+\log\eta^2}}{\t}=\frac{\eta^2}{\t}e^{-U}$ is actually an honest continuous function, and as such belongs to $L^1(\rd x)$.

From \eqref{eq:Ltilde_explicit} the generator also reads
$$
\tilde \L f =\theta\left[\Delta f+\left(-\nabla U + 2\frac{\nabla\eta}{\eta}\right)\cdot \nabla f\right]
=\theta\Big[\Delta f+\nabla (-U + 2\log\eta)\cdot \nabla f\Big],
$$
whence the adjoint
$$
\tilde \Lt q =\Delta(\t q) +\dive\Big(q\t\nabla(\underbrace{U-2\log\eta}_{=\tilde U})\Big)
=\dive(\t \nabla q) +\dive\Big(q\t\nabla(\underbrace{\tilde U+\log\t}_{=V})\Big)
$$
as claimed in \eqref{eq:Ltilde*_explicit}.
In order to derive the Gibbs relation \eqref{eq:pi_Gibbs}, we use the standard identity $\nabla q=q\nabla\log q$  to rewrite $\tilde \Lt$ as
$$
\tilde \Lt q = \dive(\t \nabla q) +\dive(q\t\nabla V)
= \dive\left(\t q\nabla\log\left(\frac{q}{e^{-V}}\right)\right).
$$
It is now immediate to check that $Q=e^{-V}$ gives a stationary solution $\tilde \Lt Q=0$, and the result follows by uniqueness of the stationary distribution $\pi$.
\end{proof}

We finish this section with a PDE characterization of solutions to the abstract conditioned Fokker-Planck equation
\begin{equation}
 \label{eq:Fokker-Planck_q}
 \partial_t q_t =\tilde \Lt q_t.
\end{equation}
\begin{prop}
\label{prop:properties_sol_q_PDE}
Let $V$ be as in \eqref{eq:def_Utilde_V}.
The density $q_t(x)\in C^\infty(\R^+\times\bar E)$ of the $Q$-process in \eqref{eq:qs_from_ps} solves
\begin{equation}
 \label{eq:Fokker-Planck_q_PDE}
 \begin{cases}
  \partial_t q_t=\dive(\t \nabla q_t)+\dive(q_t\t \nabla V) \hspace{1cm}& t>0, x\in E\\
  q_t =0 & x\in \partial E
 \end{cases}
\end{equation}
in the classical sense.
Moreover $q_t\to q_0$ narrowly as $t\to 0^+$, and for all $t_0>0$ there exist $0<c_0<C_0$ depending on $t_0$ such that
\begin{equation}
 \label{eq:bounds_qt}
 c_0\pi(x)\leq q_t(x)\leq C_0\pi(x),
 \hspace{1cm}\forall\,t\geq t_0,x\in \bar E.
\end{equation}
\end{prop}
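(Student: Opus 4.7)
The whole proof rests on the explicit representation
$$q_t(x)=\frac{e^{\lambda_0 t}\,\eta(x)\,\mathring p_t(x)}{\int_E \eta\,\rd p_0},$$
obtained by combining \eqref{eq:qs_from_ps} with the identity $\int_E\eta\,\rd p_s=e^{-\lambda_0 s}\int_E\eta\,\rd p_0$ derived at the end of the proof of Theorem~\ref{theo:Qprocess}. Assumption~\ref{ass:p0} together with $\eta>0$ in $E$ keeps the denominator positive, so the smoothness $q_t\in C^\infty(\mathbb R^+\times\bar E)$ (via Lemma~\ref{lem:prop_p_chalub} and Proposition~\ref{prop:exist_eta}) and the Dirichlet vanishing $q_t|_{\partial E}=0$ (from the linear vanishing of $\eta$) are both immediate. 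The PDE in \eqref{eq:Fokker-Planck_q_PDE} follows either by hand from the interior identity $\partial_t\mathring p_t=\Lt\mathring p_t$ in \eqref{eq:FP_with_BC} combined with the $h$-transform \eqref{eq:def_generator_Ltilde}, or more conceptually as Kolmogorov's forward equation $\partial_t q_t=\tilde\L^* q_t$ for the $Q$-process built in Theorem~\ref{theo:Qprocess}, upgraded to a classical interior PDE by smoothness.

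The narrow continuity at $t=0^+$ comes for free from the narrow continuity $p_t\rightharpoonup p_0$ built into Definition~\ref{def:very_Weak_sol}: since $\eta$ vanishes on $\partial E$, for any $\varphi\in C(\bar E)$ the product $\varphi\eta$ lies in $C(\bar E)$ with boundary value zero, so both numerator and denominator in \eqref{eq:qs_from_ps} pass to the limit and $\int\varphi\,\rd q_t\to\int\varphi\,\rd q_0$.

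The heart of the matter is the sandwich \eqref{eq:bounds_qt}, which I would attack by analysing the ratio
$$r_t(x):=\frac{q_t(x)}{\pi(x)}=\frac{\int_E\eta\alpha_0}{\int_E\eta\,\rd p_0}\cdot\frac{e^{\lambda_0 t}\mathring p_t(x)}{\alpha_0(x)}.$$
The crucial observation is that the common $\eta$ factor of $q_t$ and of $\pi=\eta\alpha_0/\int\eta\alpha_0$ cancels outright, so that $r_t$ is actually smooth up to $\partial E$ thanks to the lower bound $\alpha_0\geq c_0>0$. Substituting the spectral expansion \eqref{eq:expansion_series_p} yields
$$r_t(x)=\frac{\hat P_0\int_E\eta\alpha_0}{\int_E\eta\,\rd p_0}\;+\;\frac{1}{\alpha_0(x)}\sum_{j\geq 1}e^{-(\lambda_j-\lambda_0)t}\hat P_j\alpha_j(x),$$
with strictly positive leading constant (Assumption~\ref{ass:p0} forces $\hat P_0=\int_E\alpha_0\,\rd\mathring p_0\geq c_0\,p_0(E)>0$) and an error series controlled uniformly in $x$ by the same estimates $\|\alpha_j\|_\infty\lesssim j^{3/4}$, $\lambda_j\gtrsim j^2$ already used in the proof of Theorem~\ref{theo:exist_alpha}. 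This gives $r_t=c_\infty+\mathcal O(e^{-(\lambda_1-\lambda_0)t})$ uniformly in $x$, hence \eqref{eq:bounds_qt} for all $t\geq T$ with $T$ large enough. On the remaining compact window $[t_0,T]$, $r_t(x)$ is jointly continuous on the compact $[t_0,T]\times\bar E$ and strictly positive by the interior lower bound $\mathring p_t\geq c_t>0$ recalled in the introduction and implicit in Lemma~\ref{lem:prop_p_chalub}, so it attains positive finite extrema; taking the infimum over the two windows gives the claimed $c_0,C_0$.

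The genuine obstacle is the boundary behaviour: both $q_t$ and $\pi$ vanish linearly on $\partial E$, so \eqref{eq:bounds_qt} really encodes a matched first-order rate, which is exactly why routing through the ratio $r_t=q_t/\pi$ and exploiting the $\eta/\eta$ cancellation --- rather than any direct maximum principle applied to $q_t$ itself --- is the right strategy.
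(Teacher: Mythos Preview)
Your argument for smoothness, the Dirichlet boundary condition, the interior PDE, and the narrow continuity at $t=0^+$ matches the paper's proof essentially line by line.

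For the sandwich \eqref{eq:bounds_qt}, however, you take a genuinely different route. The paper does \emph{not} use the spectral expansion here; instead it invokes a comparison principle for the degenerate unconditioned operator $\partial_t-\Lt$ (citing Feehan's work on degenerate parabolic maximum principles, where no lateral boundary ordering is needed because $\theta$ vanishes on $\partial E$). The strong maximum principle first gives $\mathring p_{t_0}(x)\geq c(t_0)>0$ on $\bar E$, whence $c\alpha_0\leq\mathring p_{t_0}\leq C\alpha_0$; then the weak comparison principle, applied against the exact solution $e^{-\lambda_0 t}\alpha_0$, propagates this sandwich to all $t\geq t_0$ in one shot. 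Your approach instead splits into a large-$t$ regime (handled by the spectral-gap estimate $r_t=c_\infty+\mathcal O(e^{-(\lambda_1-\lambda_0)t})$, reusing the exact same series bounds as in Theorem~\ref{theo:exist_alpha}) and a compact window $[t_0,T]$ (handled by continuity and pointwise positivity of $r_t$). Both strategies are valid. The paper's is arguably cleaner --- a single comparison argument covers all $t\geq t_0$ --- but imports the Feehan maximum principle as a black box. Yours is more self-contained, recycling only machinery already established in the paper, and has the bonus of making the asymptotic rate $\lambda_1-\lambda_0$ explicit. One caveat: on the finite window you still need the strict positivity $\mathring p_t(x)>0$ up to the boundary, which you cite from the introduction; in the paper this is precisely what the strong maximum principle from Feehan supplies, so you are implicitly relying on that same ingredient (it is not really ``implicit in Lemma~\ref{lem:prop_p_chalub}''). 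Also, a harmless typo: your error series is missing the overall constant factor $\int_E\eta\alpha_0/\int_E\eta\,\rd p_0$.
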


Note at this stage that we do not claim uniqueness of solutions to \eqref{eq:Fokker-Planck_q_PDE}.
This will be proved later in section~\ref{sec:relate_PDE_metric} in a very general sense, namely in the sense of $\EVI$ metric gradient flows.
The lower and upper bounds on $q_t(x)$ will be technically useful later on.

\begin{rmk}
 \label{rmk:Dir_Neum_BCs}
The Dirichlet boundary condition can (and should) actually be interpreted as a natural Neumann boundary condition $J_t\cdot\nu|_{\partial_E} =0$, where
$$
J_t\coloneqq\t \nabla q_t+q_t\t\nabla V
$$
is the effective flux $J_t$ in \eqref{eq:Fokker-Planck_q_PDE}.
This is consistent with our interpretation of the conditioned PDE as a Wasserstein gradient flow, which usually comes with no-flux boundary conditions so as to comply with the built-in mass conservation.
To see this, let us take for granted that $q_t(x)$ is smooth and vanishes linearly for $x\to\partial E$ as claimed.
Whence the first term $\t\nabla q_t$ vanishes linearly, while $\t q_t$ vanishes quadratically in the second factor.
Moreover, since $U$ is smooth and $\frac{\eta^2}{\t}$ vanishes linearly, we see that $\nabla V=\nabla U-\nabla\log\left(\frac{\eta^2}{\t}\right)$ behaves as $-\frac{1}{\t(x)}$ as $x\to\partial E$.
Therefore we get that $J_t\cdot \nu =0 $ is actually equivalent to $q_t\theta\frac{1}{\theta}\to 0$ on the boundaries, i-e
$$
J_t\cdot\nu=0
\quad\Leftrightarrow\quad
q_t=0,
\hspace{1cm}x\in \partial E. 
$$
\end{rmk}
\begin{proof}
 We first observe that, owing to our standing assumption \ref{ass:p0} that $\mathring p_0\not\equiv 0$, and due to $P_t\eta=e^{-\lambda_0t}\eta$ from the previous section, there holds
 $$
 \int_E\eta(x)\,p_t(\rd x)
 =\int_{\bar E}P_t\eta(x) \,p_0(\rd x)
 =e^{-\lambda_0t}\int_E\eta(x)\,p_0(\rd x)>0
 $$
 in the denominator of \eqref{eq:qs_from_ps}, with continuity as $t\to 0$.
 The $C^\infty(\R^+\times\bar E)$ smoothness and narrow continuity at $t=0^+$ of $q_t$ thus follow from those of $\eta(x)$ and $p_t$ (note that narrow continuity of $t\mapsto p_t$ is part of the requirement for very weak solutions in Definition~\ref{def:very_Weak_sol}, hence $\eta(x) p_t(\rd x)\rightharpoonup \eta(x) p_0(\rd x)$ in the numerator of \eqref{eq:qs_from_ps}).
 The PDE in \eqref{eq:Fokker-Planck_q_PDE} is given by our explicit computation \eqref{eq:Ltilde*_explicit} for $\tilde \Lt$.
 As for the Dirichlet condition, the continuity of $x\mapsto \mathring p_t(x)$ for fixed $t>0$ and the fact that $\eta(x)$ vanishes continuously on the boundary guarantees that $q_t(x)$ vanishes as well.
 
 Let us now turn to the bounds \eqref{eq:bounds_qt}.
 A naive argument would consist in trying to apply a comparison principle for \eqref{eq:Fokker-Planck_q_PDE}, using as sub and supersolutions $c_0\pi(x)$ and $C_0\pi(x)$ for small and large constants $c_0,C_0>0$, respectively (observe of course that $(\partial_t-\tilde\Lt)\pi=0$).
 However this requires
 \begin{enumerate}[(i)]
  \item 
  that $\partial_t-\tilde\Lt$ satisfy the weak maximum principle
  \item
  initially ordering $c_0\pi(x)\leq q_{t_0}(x)\leq C_0 \pi(x)$ at time $t_0>0$ for some well-chosen $c_0,C_0>0$
 \end{enumerate}
Unfortunately, due to the new logarithmic drift $2\frac{\nabla \eta}{\eta}$ the operator $\tilde\Lt$ has unbounded coefficients (in particular the zero-th order term), and is not even uniformly elliptic due to the lateral vanishing of $\t$.
Hence (i) is questionable to start with.
As for (ii), note that we actually allow $p_0\in \P(\bar E)$ and $q_0\in \P(E)$ to be singular in the interior: one expects that usual smoothing properties and the strong maximum principle would rather lead to $q_t(x)\geq c_0>0$ isntantaneously in time.
However this is incompatible with our statement, and clearly there is also more than meets the eye in (ii) due to the $\t$ degeneracy.
Instead we go back to the unconditioned level in order to retrieve information on $p$, which will be then translated to $q$.
 
 More precisely, recall from \eqref{eq:FP_with_BC} that the evolution for $\mathring p$ can be considered as a nonstandard, degenerate parabolic problem without boundary conditions
 $$
 \partial_t \mathring p =\Lt\mathring p=\Delta(\t\mathring p)+\dive(\t \mathring p\nabla U),
 $$
 but now with smooth coefficients.
 Results from \cite{feehan} guarantee that, for such degenerate operators, boundary conditions are only needed on the possibly nondegenerate portion of the boundary (where the diffusion does not vanish) in order to enable the (weak and strong) maximum principle.
 Here in our one-dimensional framework with $\theta(0)=\theta(1)=0$ this means that the comparison principle holds regardless of any ordering on the boundary, and only the initial ordering should be taken into account when applying the comparison principle (see also \cite{EM,epstein2013degenerate} for related topics).
 Since $\mathring p_0\geq 0$ is nontrivial, a first application of the strong maximum principle guarantees that $\mathring p_{t_0}(x)\geq c(t_0)>0$ for $t_0>0$ and any $x\in\bar E$.
 Recalling that the principal eigenfunction $\alpha_0(x)$ is smooth and bounded away from zero, we can therefore sandwich $c\alpha_0(x)\leq \mathring p_{t_0}(x)\leq C\alpha_0(x)$ upon choosing $c,C$ are small and large enough.
 Observing that $P(t,x)=e^{-\lambda_0t}\alpha_0(x)$ is a solution of $(\partial_t-\Lt)P=0$, the weak maximum principle therefore guarantees
 $$
 ce^{-\lambda_0 (t-t_0)}\alpha_0(x)\leq \mathring p_t(x)\leq  Ce^{-\lambda_0 (t-t_0)}\alpha_0(x),
 \hspace{1cm}t\geq t_0>0,\,x\in \bar E.
 $$
 Multiplying by $\eta(x)$ and dividing by $\int_E\eta(x)p_t(\rd x)=e^{-\lambda_0 t}\int_E\eta(x)p_0(\rd x)>0$ finally gives the desired result (recall that $\pi=\eta\alpha=C\eta\alpha_0$).
\end{proof}

From a purely analytical perspective, once $q_t$ has been determined by solving the conditioned Fokker-Planck equation \eqref{eq:Fokker-Planck_q_PDE}, one can reconstruct the full unconditioned distribution $p_t$ by first undoing the change of variables \eqref{eq:qs_from_ps}
as
$$
q_t(\rd x)=\frac{\eta(x)p_t(\rd x)}{\int_E \eta(y)p_s(\rd y)}
=\frac{\eta(x)\mathring p_t(\rd x)}{e^{-\lambda_0 t}\int_E \eta(y)\mathring p_0(\rd y)}
\quad\Rightarrow\quad
\mathring p_t(\rd x)=e^{-\lambda_0 t}\int_E \eta(y)\mathring p_0(\rd y)\frac{q_t(\rd x)}{\eta(x)},
$$
then leveraging the boundary kinematics \eqref{eq:kinematics_pboundary_from_pint} to compute $p^\partial_t$, and finally retrieve $p_t=p^\partial_t+\mathring p_t$.
This may appear slightly intriguing at first sight:
Indeed, the $Q$-process was obtained by conditioning to non-absorption, a $\PP$-negligible set of paths.
This construction therefore comes at the price of a total loss of probabilistic information, and it seems very surprising that one can reconstruct any statistics of the unconditioned dynamics from this negligible knowledge.
This actually goes even further: 
Not only the time marginal $p_t$ can be reconstructed from that of $q_t$, but in view of \eqref{eq:dPP/dQQ} even the path-measure $\PP$ can be inferred from $\QQ$, so in some sense it seems that the fullest possible unconditioned information can be retrieved.
As pointed out to us by N. Champagnat, this picture is however not completely correct:
The above reconstruction actually requires the explicit knowledge of the $\eta(x)$ eigenfunction, which comes from the unconditioned realm exclusively and cannot be determined from the conditioned generator $\tilde \L$ in any way.
In some sense, the eigenfunction $\eta$ really encodes much of the unconditioned information, at least sufficiently to bridge the two worlds, and there is no paradox.

%
%
\section{Metric setting}
\label{sec:metric}
Let us first motivate informally the metric theory of gradient flows \cite{AGS}.
Given a smooth function $F:\R^n\to\R$, the Cauchy-Schwarz inequality gives
$$
\frac{d}{dt} F(x_t)=\nabla F(x_t)\cdot \dot x_t\geq -\frac{1}{2}|\dot x_t|^2 -\frac{1}{2}|\nabla F(x_t)|^2
$$
for \emph{any} arbitrary $C^1$ curve $(x_t)_{t\geq 0}$.
Assume now that a particular curve $x_t$ satisfies the reverse inequality
\begin{equation}
\label{eq:EDI_formal}
\frac{d}{dt} F(x_t)\leq -\frac{1}{2}|\dot x_t|^2 -\frac{1}{2}|\nabla F(x_t)|^2.
\end{equation}
Then equality $\nabla F(x_t)\cdot \dot x_t=-\frac{1}{2}|\dot x_t|^2 -\frac{1}{2}|\nabla F(x_t)|^2$ holds in the Cauchy-Schwarz inequality, which implies
$$
\dot x_t=-\nabla F(x_t).
$$
In other words, the \emph{Energy Dissipation Inequality} \eqref{eq:EDI_formal} ($\EDI$ in short) is equivalent to the gradient flow.

Another useful characterization of the gradient-flow relies instead on convexity:
Assume now that the driving functional $F$ is in addition $\lambda$-convex, in the sense that the Hessian $D^2 F(x)\geq \lambda\operatorname{Id}$ for some $\lambda\in \R$.
By convexity the gradient $\nabla F(x)$ is completely characterized by
$$
\left\{F(y)\geq F(x) +\frac{\lambda}{2}|y-x|^2 + p\cdot(y-x) \mbox{ for all }y\right\}
\qqtext{if and only if}
\{p=\nabla F(x)\}.
$$
If now $x_t$ is an arbitrary $C^1$ curve we have $\frac{1}{2}\frac{d}{dt}|y-x_t|^2=\dot x_t\cdot(y-x_t)$, and imposing the \emph{Evolution Variational Inequality} ($\EVI_\lambda$ in short)
\begin{equation}
\label{eq:EVI_formal}
\frac{1}{2}\frac{d}{dt}|y-x_t|^2+\frac{\lambda}{2}|y-x_t|^2 + F(x_t)\leq F(y),
\hspace{1cm}\mbox{for all }y\in R^n\mbox{ and }t\geq 0
\end{equation}
is therefore also equivalent to $\dot x_t=-\nabla F(x_t)$.
Although convexity is not really essential for existence purposes, it is more suited for quantitative contraction and stability:
For, whenever $F$ is $\lambda$-convex and $x^1_t,x^2_t$ are two solutions, computing
$$
\frac{1}{2}\frac{d}{dt}|x^1_t-x^2_t|^2=(x^1_t-x^2_t)\cdot (\dot x^1_t-\dot x^2_t)
= -(x^1_t-x^2_t)\cdot (\nabla F(x^1_t)-\nabla F(x^2_t))\leq -\lambda |x^1_t-x_t^2|^2
$$
gives exponential contraction
$$
|x^1_t-x^2_t|\leq e^{-\lambda t}|x^1_0-x^2_0|.
$$
The fundamental observation is that both \eqref{eq:EDI_formal} and \eqref{eq:EVI_formal} have a clear counterpart in a purely metric setting, and can serve as a starting point for a theory of gradient flows in metric spaces.
This is developed in \cite{AGS} (see also \cite{santambrogio2017euclidean} for a gentle introduction), and we collect below (without proofs) the bare minimum material to serve our purpose later on.
\\

Let $(\X,\d)$ be a geodesic, complete metric space and $\E:\X\to(-\infty,+\infty]$ a function with proper domain $\Dom(\E)=\{x\in\X:\,\E(x)<+\infty\}$.
Mostly following \cite{AGS,muratori2020gradient}, we have
\begin{itemize}
 \item 
 An \emph{absolutely continuous} curve $\gamma:[0,T]\to\X$ belongs to $AC^p(0,T)$, $p\in [1,+\infty]$, if there exists $m\in L^p(0,T)$ such that
 $$
 \d(\gamma_{t_0},\gamma_{t_1})\leq \int_{t_0}^{t_1} m(t)\,\rd t,
 \hspace{1cm}\forall\,t_0,t_1\in [0,T].
 $$
 In this case $\gamma$ is absolutely continuous, the \emph{metric speed}
 \begin{equation}
 \label{eq:def_metric_speed_abstract}
 |\gamma'|(t)\coloneqq\lim\limits_{h\to 0}\frac{\d(\gamma_{t+h},\gamma_t)}{h}
 \end{equation}
 exists for a.e. $t\in (0,T)$, belongs to $L^p(0,T)$, and is the smallest function $m$ satisfying the above inequality.
 For $p=1$ we simply denote $AC=AC^1$, and the local-in time counterparts are denoted $AC^p_{\mathrm{loc}}$ with obvious definitions.
 \item
 We say that $g:\X\to[0,+\infty]$ is a \emph{strong upper gradient} for $\E$ if, for every absolutely continuous curve $\gamma\in AC(0,T)$, the function $g\circ\gamma$ is Borel and
 \begin{equation}
 \label{eq:chain_rule_metric}
 |\E(\gamma_{t_0})-\E(\gamma_{t_1})|\leq \int_{t_0}^{t_1} g(\gamma_t)|\gamma'|(t)\,\rd t,
 \hspace{1cm}\forall\,0<t_0\leq t_1<T.
  \end{equation}
In particular if $g\circ \gamma |\gamma'|\in L^1(0,T)$ then $\E\circ\gamma$ is absolutely continuous and
 $$
 \left|\frac{d}{dt}\E\circ\gamma(t)\right|\leq g(\gamma_t)|\gamma'|(t),
 \hspace{1cm}\mbox{for a.e. }t\in (0,T).
 $$

 \item
 The \emph{local slope} is
 \begin{equation}
 \label{eq:def_metric_slope_abstract}
 |\partial \E|(x)\coloneqq \limsup\limits_{y\to x}\frac{(\E(x)-\E(y))^+}{\d(x,y)}
 \end{equation}
 \item
 The function $\E$ is \emph{$\lambda$-geodesically convex} (or $\lambda$-convex, in short) if, for any $x,y\in \X$, there exists a geodesic $(\gamma_t)_{t\in[0,1]}$ joining $\gamma_0=x,\gamma_1=y$ along which
 $$
 \E(\gamma_t)\leq (1-t)\E(x)+t \E(y)-\lambda\frac{t(1-t)}{2}\d^2(x,y),
 \qquad 
 \forall t\in[0,1].
 $$
 \item
 Whenever $\E$ is $\lambda$-convex the local slope admits the global representation
 $$
 |\partial \E|(x)=\sup\limits_{y\neq x}\frac{(\E(x)-\E(y)+\lambda\d^2(x,y))^+}{\d(x,y)},
 $$
 has dense domain, is automatically lower semi-continuous, and is a strong upper gradient for $\E$.
 This relies on the fact that the global slope as above is always l.s.c. and a strong upper gradient, and $\lambda$-convexity guarantees that \emph{local=global}.
 We omit the subtle details and refer instead to \cite{AGS,muratori2020gradient}.
 \item
 A locally absolutely continuous map $\gamma\in AC^2_{\mathrm{loc}}([0,\infty))$ satisfies the \emph{Energy-Dissipation Inequality} $\EDI_{t_0}$ started from $t_0\geq 0$ if $\gamma_{t_0}\in \Dom(\E)$ and
 \begin{equation}
 \label{eq:EDI0}
 \int_{t_0}^t\left(\frac{1}{2}|\gamma'|^2(s)+\frac 12 |\partial\E|^2(\gamma_s)\right) \rd s + \E(\gamma_t)\leq \E(\gamma_0),\qquad\forall\,t>t_0.
 \tag{$EDI_{t_0}$}
 \end{equation}
 Whenever equality is achieved we say that $\gamma$ satisfies the \emph{Energy Dissipation Equality} $\EDE$.
 \item
 Fix $\lambda\in \R$.
 A solution of the \emph{Evolution Variational Inequality} $\EVI_\lambda$ is a continuous curve $\gamma:(0,\infty)\to \X$ such that $\E\circ\gamma\in L^1_{\mathrm{loc}}$ and, for every $y\in \Dom(\E)$,
 \begin{equation}
  \label{eq:def_abstract_EVI}
  \frac{1}{2}\frac{d}{dt}\d^2(\gamma_t,y)+\frac{\lambda}{2}\d^2(\gamma_t,y)+\E(\gamma_t)\leq \E(y)
  \tag{$\EVI_\lambda$}
 \end{equation}
 for a.e. $t>0$.
 
 An \emph{$\EVI_\lambda$-gradient flow} of $\E$ on a (possibly strict) subdomain $D\subseteq \overline{\Dom(\E)}$ is a family of continuous maps $(\mathsf S_t)_{t\geq 0}:D\to D$ such that, for every $\gamma_0\in D$,
 the strong semi-group property holds
 $$
 \mathsf S_{t+h}\gamma_0 =\mathsf S_h \mathsf S_t\gamma_0 \mbox{ for }t,h\geq 0
 \qqtext{with}
 \lim\limits_{t\to 0^+}\mathsf S_t \gamma_0=\mathsf S_0\gamma_0=\gamma_0,
 $$
 and
 $$
 t\mapsto \gamma_t=\mathsf S_t\gamma_0
 \qquad \mbox{is an }\EVI_\lambda \mbox{ solution}.
 $$
 \item
 Is is known \cite{AGS} that $\lambda$-convexity (almost) guarantees the well-posedness of an $\EVI_\lambda$ flow, but the converse is also true:
 If $\E$ generates an $\EVI_\lambda$-flow then necessarily $\E$ is $\lambda$-convex along \emph{all} geodesics, see \cite{daneri2008eulerian,monsaingeon2020dynamical}.
\end{itemize}
Fundamental properties of $\EVI_\lambda$ solutions are as follows
\begin{theo}[Properties of $\EVI_\lambda$-solutions, {\cite[Theorem 3.5,]{muratori2020gradient}}]
\label{theo:prop_EVI_sols}
\phantom{1}
 \par {\bf $\lambda$-contraction and uniqueness}
 \begin{equation*}
  \d(\gamma^1_t,\gamma^2_t)\leq e^{-\lambda t}\d(\gamma^1_0,\gamma^2_0),
  \hspace{1cm}\forall\,t\geq 0
 \end{equation*}
for any two solutions $\gamma^1,\gamma^2$.
\par
{\bf Regularizing effect}: $\gamma$ is locally Lipschitz, $\gamma_t\in \Dom(|\partial\E|)\subset \Dom(\E)$ for all $t>0$, and
$$
t\in[0,\infty)\mapsto \E(\gamma_t) \mbox{ is nonincreasing}
$$
\par
{\bf Functional inequalities and asymptotic behaviour}: if $\lambda>0$ and $\E$ has complete sublevels, then it admits a unique minimizer $\gamma_\infty\in\X$ satisfying
\begin{equation*}
 \frac{\lambda}{2}\d^2(x,\gamma_\infty)\leq \E(x)-\E(\gamma_\infty)\leq \frac{1}{2\lambda}|\partial\E|^2(x),
\end{equation*}
for all $x\in\X$ and
\begin{equation*}
 \E(\gamma_t)-\E(\gamma_\infty)\leq e^{-2\lambda(t-t_0)}\left(\E(\gamma_{t_0})-\E(\gamma_{t_\infty})\right)
\end{equation*}
\begin{equation*}
 \d(\gamma_t,\gamma_\infty)\leq e^{-\lambda(t-t_0)}\d(\gamma_{t_0},\gamma_\infty)
\end{equation*}
for all $t\geq t_0\geq 0$.
\end{theo}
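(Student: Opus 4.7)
The plan is to establish the three items in the order listed, using in each case only the abstract properties of $\EVI_\lambda$-solutions together with the $\lambda$-geodesic convexity that is encoded (essentially ``for free'') by the existence of an $\EVI_\lambda$-flow.

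For the $\lambda$-contraction and uniqueness, I would use the standard doubling-of-variables trick. Apply \eqref{eq:def_abstract_EVI} for $\gamma^1$ with test element $y=\gamma^2_s$ (so the energy terms become $\E(\gamma^1_t)\leq \E(\gamma^2_s)$ up to the distance terms) and symmetrically for $\gamma^2$ with $y=\gamma^1_t$. Adding the two differential inequalities and using symmetry of $\d$, the entropy contributions cancel and one gets
$$
\frac{1}{2}(\partial_t+\partial_s)\d^2(\gamma^1_t,\gamma^2_s)+\lambda\,\d^2(\gamma^1_t,\gamma^2_s)\leq 0.
$$
Restricting to the diagonal $s=t$ (which is where some technical care is needed, since the curves are only absolutely continuous) and applying Gr\"onwall gives $\d^2(\gamma^1_t,\gamma^2_t)\leq e^{-2\lambda t}\d^2(\gamma^1_0,\gamma^2_0)$. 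Uniqueness is then immediate.

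For the regularizing effect, I would derive a local Lipschitz bound of $t\mapsto\gamma_t$ by exploiting $\EVI_\lambda$ over small intervals together with the triangle inequality, in the spirit of \cite{AGS}. The monotonicity of $t\mapsto \E(\gamma_t)$ follows from integrating \eqref{eq:def_abstract_EVI} between $t_0<t_1$ with $y=\gamma_{t_0}$: the squared distance term $\d^2(\gamma_{t_1},\gamma_{t_0})\geq 0$ produces a clean upper bound $\E(\gamma_{t_1})\leq \E(\gamma_{t_0})+o(1)$ when combined with the Lipschitz property. To show $\gamma_t\in\Dom(|\partial\E|)$ for all $t>0$, the strategy is to establish the energy-dissipation identity $\E(\gamma_{t_0})-\E(\gamma_t)=\int_{t_0}^t|\partial \E|^2(\gamma_s)\rd s$ by combining \eqref{eq:def_abstract_EVI} with the strong upper gradient property \eqref{eq:chain_rule_metric} for $|\partial\E|$, forcing $|\partial\E|^2(\gamma_s)<\infty$ on a dense set of times; the semi-group property of $(\mathsf{S}_t)$ combined with lower semi-continuity of $|\partial\E|$ then propagates this to every $t>0$.

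Assume now $\lambda>0$. Existence of a minimizer $\gamma_\infty$ follows from completeness of the sublevels and strict $\lambda$-convexity; the bound $\frac{\lambda}{2}\d^2(x,\gamma_\infty)\leq \E(x)-\E(\gamma_\infty)$ is a direct instance of $\lambda$-convexity along a geodesic joining $\gamma_\infty$ to $x$, evaluated at $t\to 0^+$ using $|\partial\E|(\gamma_\infty)=0$. The exponential contraction of distances is just the contraction estimate of item~1 applied to the pair $(\gamma_t,\gamma_\infty)$, since $\gamma_\infty$ is a stationary solution of the flow. Exponential energy decay then follows from a differential inequality for $E(t)\coloneqq \E(\gamma_t)-\E(\gamma_\infty)$: the energy-dissipation identity gives $E'(t)=-|\partial\E|^2(\gamma_t)$, and combining with the slope-energy gap inequality $E(t)\leq \tfrac{1}{2\lambda}|\partial\E|^2(\gamma_t)$ yields $E'(t)\leq -2\lambda E(t)$, whence Gr\"onwall.

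The main obstacle is the slope-energy inequality $\E(x)-\E(\gamma_\infty)\leq \frac{1}{2\lambda}|\partial\E|^2(x)$, the abstract HWI/log-Sobolev-type bound. I would prove it by applying \eqref{eq:def_abstract_EVI} with $y=\gamma_\infty$ starting from $\gamma_0=x$: the resulting inequality, integrated from $0$ to $t$ and combined with the energy-dissipation identity, gives
$$
E(0)+\lambda\int_0^t E(s)\rd s \leq \int_0^t |\partial\E|^2(\gamma_s)\rd s + \tfrac{1}{2}\d^2(x,\gamma_\infty)-\tfrac{1}{2}\d^2(\gamma_t,\gamma_\infty),
$$
and passing to the limit $t\to 0^+$ with the help of the lower semi-continuity of the slope (together with the contraction estimate to control the distance terms) yields the claimed bound. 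This is the only point where the $\lambda$-convexity plays an indispensable nonlinear role, the rest of the argument being essentially a linear Gr\"onwall bootstrap.
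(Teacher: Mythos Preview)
The paper does not prove this theorem: it is stated as a quotation from \cite{muratori2020gradient} in the preliminary Section~\ref{sec:metric}, which the authors explicitly preface with ``we collect below (without proofs) the bare minimum material''. So there is no paper-proof to compare against; your sketch is essentially the standard argument from \cite{AGS,muratori2020gradient} that the paper defers to.

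That said, one step in your outline does not work as written. For the slope--energy gap $\E(x)-\E(\gamma_\infty)\leq \frac{1}{2\lambda}|\partial\E|^2(x)$, integrating \eqref{eq:def_abstract_EVI} with $y=\gamma_\infty$ from $0$ to $t$ and then sending $t\to 0^+$ gives nothing: both integrals vanish and the distance terms cancel, leaving a trivial inequality. The clean route is instead purely static, via the global slope formula available under $\lambda$-convexity: for any $y\neq x$,
\[
|\partial\E|(x)\,\d(x,y)\ \geq\ \E(x)-\E(y)+\tfrac{\lambda}{2}\d^2(x,y).
\]
Taking $y=\gamma_\infty$ and maximizing the right-hand side over $D=\d(x,\gamma_\infty)$ (or simply applying Young's inequality $|\partial\E|(x)\,D\leq \tfrac{1}{2\lambda}|\partial\E|^2(x)+\tfrac{\lambda}{2}D^2$) gives the bound directly. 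The rest of your sketch (doubling of variables for contraction, Gr\"onwall for exponential decay, $\EDE$ for the regularizing effect) is the standard line of argument and is correct in spirit.
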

Finally, a useful connection between notions of solutions of metric gradient flows will be for us
\begin{theo}[$\EDI\Rightarrow\EVI$]
\label{theo:EDIt0_EVI}
Assume that an $\EVI_\lambda$ gradient flow $\mathsf S_t$ of $\E$ exists on a (possibly strict) subdomain $D\subseteq\overline{\Dom(\E)}$.
Let $\gamma:[0,\infty)\to\E$ be an $\EDI_{t_0}$ solution for all $t_0>0$, whose image is contained in $D$.
If $\gamma$ is continuous at $t=0^+$ then $\gamma_t=\mathsf S_t\gamma_0$.
\end{theo}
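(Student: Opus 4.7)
The plan is to fix an arbitrary $t_0>0$, compare $\gamma$ on $[t_0,\infty)$ with the $\EVI_\lambda$ evolution $\xi^{t_0}_t := \mathsf S_{t-t_0}\gamma_{t_0}$ starting from the common value $\gamma_{t_0}\in D$, show that $\xi^{t_0}_t=\gamma_t$ for all $t\geq t_0$, and then send $t_0\to 0^+$ using the assumed continuity of $\gamma$ at $0$ and the contractivity of $\mathsf S_t$.

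\textbf{Identification on $[t_0,\infty)$.} The first key input is that $\xi^{t_0}$, being an $\EVI_\lambda$ solution, automatically satisfies the Energy Dissipation \emph{Equality}: indeed, the regularizing effect from Theorem~\ref{theo:prop_EVI_sols} places $\xi^{t_0}_t\in\Dom(|\partial\E|)$ for all $t>t_0$, and the $\lambda$-convexity forced by the existence of the $\EVI_\lambda$ flow makes $|\partial\E|$ a strong upper gradient, so the chain rule gives $\tfrac{d}{dt}\E(\xi^{t_0}_t)=-\tfrac12|(\xi^{t_0})'|^2(t)-\tfrac12|\partial\E|^2(\xi^{t_0}_t)$ with equality in Young's inequality. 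Next, I would plug $y=\gamma_s$ into the $\EVI_\lambda$ inequality for $\xi^{t_0}$ and perform a variable-doubling (Dafermos-type) argument by looking at the diagonal $\phi(t):=\d^2(\xi^{t_0}_t,\gamma_t)$. Splitting the increment
$$\phi(t+h)-\phi(t)=\bigl[\d^2(\xi^{t_0}_{t+h},\gamma_{t+h})-\d^2(\xi^{t_0}_t,\gamma_{t+h})\bigr]+\bigl[\d^2(\xi^{t_0}_t,\gamma_{t+h})-\d^2(\xi^{t_0}_t,\gamma_t)\bigr],$$
one controls the first bracket through $\EVI_\lambda$ (yielding the quadratic term $-\lambda\phi$ plus an energy difference $2(\E(\gamma_{t+h})-\E(\xi^{t_0}_t))$), and the second through the metric speed of $\gamma$ via the elementary bound $|\d^2(a,b)-\d^2(a,c)|\leq 2\max(\d(a,b),\d(a,c))\d(b,c)$ together with the absolute continuity of $\gamma$. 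Combining this with $\EDE$ for $\xi^{t_0}$ and $\EDI_{t_0}$ for $\gamma$, the energy cross terms $\E(\gamma_t)-\E(\xi^{t_0}_t)$ get absorbed into the metric dissipations using Young's inequality $|\partial\E||\gamma'|\leq \tfrac12|\partial\E|^2+\tfrac12|\gamma'|^2$. One is then left with an integral (or distributional) inequality of Grönwall type, $\phi(t)\leq \phi(t_0)+\int_{t_0}^t(-2\lambda)\phi(r)\,\rd r$. Since $\phi(t_0)=\d^2(\gamma_{t_0},\gamma_{t_0})=0$, this forces $\phi\equiv 0$ on $[t_0,\infty)$, i.e.\ $\gamma_t=\mathsf S_{t-t_0}\gamma_{t_0}$.

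\textbf{Passing to the limit $t_0\to 0^+$.} By assumption $\gamma_{t_0}\to\gamma_0$ in $(\X,\d)$ as $t_0\to 0^+$. The $\lambda$-contraction statement of Theorem~\ref{theo:prop_EVI_sols} makes $\mathsf S_t$ a Lipschitz function of its initial datum for every fixed $t>0$, hence $\mathsf S_{t-t_0}\gamma_{t_0}\to\mathsf S_t\gamma_0$. Combined with the identity $\gamma_t=\mathsf S_{t-t_0}\gamma_{t_0}$, valid for every $t_0\in(0,t]$, this yields $\gamma_t=\mathsf S_t\gamma_0$ for all $t>0$, and the equality extends to $t=0$ by continuity of both sides.

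\textbf{Main obstacle.} The delicate step is the rigorous derivation of the diagonal differential inequality for $\phi(t)=\d^2(\xi^{t_0}_t,\gamma_t)$: neither curve is classically differentiable, so $\phi$ cannot be differentiated naively, and the doubling-variable manipulation must be performed at the level of finite differences, carefully combining the pointwise $\EVI_\lambda$ inequality for $\xi^{t_0}$, the integrated $\EDI_{t_0}$ for $\gamma$, and the $\EDE$ equality for $\xi^{t_0}$. Tracking signs so that $\E(\gamma_t)-\E(\xi^{t_0}_t)$ is effectively cancelled by the Young-type bound is the crux of the argument; once this closing of the estimate is done, Grönwall's lemma and the passage to $t_0=0$ are routine.
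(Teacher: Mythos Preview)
The paper's proof is deliberately short: it simply invokes \cite[Theorem~4.2]{muratori2020gradient} on each interval $[t_0,\infty)$, observes that the $\EVI_\lambda$ condition \eqref{eq:def_abstract_EVI} is local in time, and then uses the assumed continuity at $t=0^+$ to glue. Your passage to the limit $t_0\to 0^+$ via contractivity is exactly this last step, so that part is fine and matches the paper.

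Where you diverge is in attempting a self-contained proof of the identification $\gamma_t=\mathsf S_{t-t_0}\gamma_{t_0}$ on $[t_0,\infty)$, which the paper outsources entirely to Muratori--Savar\'e. Your doubling strategy is natural, but the sketch has a genuine gap at the closing step. Combining $\EVI_\lambda$ for $\xi^{t_0}$ (first bracket) with the crude bound $|\partial_r\d^2(\xi^{t_0}_t,\gamma_r)|\le 2\sqrt{\phi}\,|\gamma'|$ (second bracket) yields
\[
\phi'(t)\;\le\; -\lambda\phi(t)+2\bigl(\E(\gamma_t)-\E(\xi^{t_0}_t)\bigr)+2\sqrt{\phi(t)}\,|\gamma'|(t).
\]
Even after invoking the global-slope representation (available thanks to $\lambda$-convexity) to bound $\E(\gamma_t)-\E(\xi^{t_0}_t)\le |\partial\E|(\gamma_t)\sqrt{\phi(t)}-\tfrac{\lambda}{2}\phi(t)$, together with $|\gamma'|=|\partial\E|(\gamma)$ from $\EDE$, one only arrives at $\phi'\le -2\lambda\phi+4\sqrt{\phi}\,|\partial\E|(\gamma_t)$. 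With $\phi(t_0)=0$ this does \emph{not} force $\phi\equiv 0$: writing $\psi=\sqrt{\phi}$ gives $\psi'\le -\lambda\psi+2|\partial\E|(\gamma_t)$, whose solution from $\psi(t_0)=0$ is generically positive. The Young inequality you invoke, $|\partial\E||\gamma'|\le \tfrac12|\partial\E|^2+\tfrac12|\gamma'|^2$, relates only one-point quantities along $\gamma$ and does not control the two-point energy difference $\E(\gamma_t)-\E(\xi^{t_0}_t)$; nor do $\EDE$ for $\xi^{t_0}$ and $\EDI_{t_0}$ for $\gamma$ (which bound time-integrals of dissipations, not the pointwise cross-term). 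In short, a Dafermos comparison between an $\EVI$ curve and a mere $\EDI$ curve does not close with these ingredients alone; the argument in \cite{muratori2020gradient} uses more of the structure coming from the assumed existence of the $\EVI$ flow on $D$. You rightly flag this as the ``main obstacle'', but the outline as written does not overcome it.
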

Since it is well-known that $\EVI_\lambda\Rightarrow \EDE\Rightarrow\EDI$, this means that all three notions are in fact equivalent, provided that the solution under consideration remains in the domain of the flow $\mathsf S$.
\begin{proof}
This exact statement can be found in \cite[theorem 4.2]{muratori2020gradient}, assuming that $\EDI_0$ holds up to $t=0$ and not up to any $t_0>0$ (note however that the continuity of $\gamma$ at $t=0^+$ is contained in the definition of $\EDI_0$).
Observe that \eqref{eq:def_abstract_EVI} is in fact local in time:
applying \cite[theorem 4.2]{muratori2020gradient} thus shows that $\gamma$ is an $\EVI_\lambda$ solution in the interval $[t_0,+\infty)$ for any $t_0>0$, thus in $(0,+\infty)$.
The continuity $\gamma_t\to\gamma_0$ imposed at $t\to 0^+$ allows to conclude.
\end{proof}

\subsection{The Wasserstein-Shahshahani distance}
Let us recall that our whole analysis relies upon choosing an ad-hoc geometry, that is optimal transport over the Shahshahani space.
More precisely, recall that we choose to view $E=(0,1)$ as a Riemannian manifold with
metrics induced by $\t$
 $$
 \langle\xi_1,\xi_2\rangle_\t \coloneqq \frac{\langle\xi_1,\xi_2\rangle}{\t(x)}
 ,\qquad\forall\, \xi_1,\xi_2\in T_x,
 \,x\in E.
 $$
 We denote by
 $$
 |\xi|^2_{\t}\coloneqq \frac{|\xi|^2}{\t(x)}
 $$
 the corresponding norm on $T_x$.
 For the specific choice $\t(x)=x(1-x)$, which is our guiding thread in this work, this is known as the Shahshahani metrics and was introduced in \cite{shahshahani1979new} to study a Replicator Equation from evolutionary genetics as a gradient system.
 Given the biological background of our Kimura model, it is no surprise that this Shahshahani structure should appear again here and we stick to the terminology.
 Although singular at $x=0,1$, this induces a proper distance on $\bar E$ \cite[Lemma 10]{chalub2021gradient}
 \begin{equation}
 \label{eq:explicit_d_Shahshahani}
 \dS(x,y)
 \coloneqq 
 \inf\limits_{\substack{
u_0=x\\
 u_1=y  
}
 } \int_0^1 |\dot u_t|^2_{\t}\,\rd t
 =\int_x^y\frac{1}{\sqrt{\t(z)}}\,\rd z.
 \end{equation}
 Note that the linear behaviour of $\t$ at the boundaries makes this quantity finite even for $x=0$ or $y=1$, and that
 \begin{equation}
 \label{eq:dS_behaves_sqrt}
\dS(x,\partial E)\sim C \sqrt{\d_\R(x,\partial E)} \qquad \mbox{as } x\to\partial E.
 \end{equation}
 The space $(\bar E,\dS)$ has finite diameter $D=\int_0^1\frac{\rd z}{\sqrt{\t(z)}}$, and it is not difficult to check that it is complete.
 The corresponding \emph{Wasserstein-Shahshahani distance} over $\P(\bar E)$ is then simply defined as the usual quadratic Wasserstein distance  \cite{villani_BIG} over the Polish space $(\bar E,\dS)$,
 $$
 \W_\t^2( q , r )\coloneqq \min\limits_{\gamma\in \Gamma(q,r)}\iint_{\bar E^2}\dS^2(x,y)\rd\gamma(x,y)
 $$
 for any probability measures $q,r\in \P(\bar E)$.
 Here the minimization runs over admissible plans
 $$
 \gamma\in\Gamma( q , r )=\left\{\pi(\rd x,\rd y)\in \P(\bar E\times\bar E) \mbox{ with marginals }\pi_x= q ,\pi_y= r \right\}.
 $$
General Optimal Transport theory \cite{villani_BIG,villani2003topics} gives that $(\P(\bar E),\W_\t)$ turns into a Polish space of its own.
Note also that, since $\dS$ is locally equivalent to the Euclidean metric and $1/2$-H\"older at the boundaries, we have that $|x_n-x|_\R\to 0$ if and only if $\dS(x_n,x)\to 0$ and therefore the set of real-valued $\dS$-continuous functions over $\bar E$ coincides with the usual set $C(\bar E)$ of continuous functions up to the boundary.
In particular $\W_\t$ is well-known to metrize the narrow convergence of measures \cite{villani_BIG}, i.e.
 \begin{center}
$\int_{\bar E}\varphi(x) q _n(\rd x)\to \int_{\bar E}\varphi(x) q (\rd x)$ for all $\varphi\in C(\bar E)$
\qquad 
if and only if
\qquad 
$\W_\t( q _n, q )\to 0$.
\end{center}
In order to emphasize the dependence on the Riemannian structure and differentiate from the Euclidean framework we use $\t$-subscripts in the sequel for any quantity computed with respect or related to this geometry.
For example we write $\P_{\t}=\P(\bar E)$ equipped with $\W_\t$, the metric speed of a curve of measures $ q _t$ computed w.r.t $\W_\t$ is denoted $| q '|_{\W_\t}(t)$, we write $AC_\t$ for $\t$-absolutely continuous curves of measures, and so on (with sometimes a slight abuse of notations).
By contrast and whenever needed, we use the $\R$ subscript to denote any quantity measured using the Euclidean norm $|.|_\R$.
\subsection{Absolutely continuous curves}
A fundamental result in Optimal Transport is the Benamou-Brenier formula \cite{BB}, yielding a dynamical interpretation of the static Wasserstein distance.
This allows the representation of $AC^2$ curves in the Wasserstein space as solutions of continuity equations
\begin{equation}
\label{eq:CE}
\partial_t q _t+\dive( q _t v_t)=0,
\tag{CE}
\end{equation}
which in turn reveals crucial in identifying solutions of abstract metric gradient-flows in the Wasserstein space as actual PDEs.
However, since our pseudo-Riemannian manifold is singular on the boundaries, we cannot simply apply the existing theory.
We opted instead for a self-contained presentation, mostly relying on \cite{lisini2007characterization} and trying to keep the technical details to a bare minimum (many of the arguments below are mere adaptations of known results, fitted to our purposes).

\begin{defi}
\label{def:weak_sols_CE}
 We say that a curve of measures $( q _t)_{t\in I}$ on $ \P(\bar E)$ and a (Borel) velocity field $v_t(x)$ solve the continuity equation \eqref{eq:CE} in $(t,x)\in I\times \bar E$ if
 $$
 \frac{d}{dt}\int_{\bar E}\varphi(x)\,  q _t(\rd x)=\int_{\bar E}\nabla\varphi(x)\cdot v_t(x) \, q _t(\rd x)
 $$
 holds in the sense of distributions in $I$ for all $\varphi\in C^1(\bar E)$.
\end{defi}
Note that this encodes a no-flux boundary condition $ q  v\cdot  \nu =0$ on $\partial E$ in a weak sense.
We denote the weighted $L^2$ norm (kinetic energy) as
$$
\|v_t\|^2_{L^2_\t( q _t)}
\coloneqq
\int_{\bar E}|v_t(x)|_\t^2\, q _t(\rd x)
=
\int_{\bar E}\frac{|v_t(x)|^2}{\t(x)}\, q _t(\rd x)
$$
whenever this quantity is finite.
Note that the very meaning of this integral is far from being clear:
Since $\frac{1}{\t(x)}\approx \frac{1}{x(1-x)}$ is not continuous up to the boundary, it may a priori not be legitimate to integrate w.r.t to an arbitrary measure $q\in \P(\bar E)$, in particular if $q$ has atoms on the boundary.
This can be made rigorous upon understanding the quotient $\frac{|\xi|^2}{\t(x)}$ in a particular sense, which will be done very soon in Lemma~\ref{lem:udot} -- see in particular \eqref{eq:def_Lagrangian}.
Leaving this issue aside, the Benamou-Brenier formula is expected to read at least formally
$$
\W^2_\t( q _0, q _1)=\min\Bigg\{\int_0^1\|v_t\|^2_{L^2_\t( q _t)}\rd t
\mbox{ s.t. }( q ,v)\mbox{ solve \eqref{eq:CE} with } q |_{t=0}= q _0, q |_{t=1}= q _1
\Bigg\},
$$
and suggests that $AC^2(I;\P_\t)$ curves should be represented by suitable velocity fields solving \eqref{eq:CE} with finite weighted kinetic action.
This is indeed the case: 
\begin{theo}
\label{theo:AC2_curves}
 For any curve $ q \in AC^2(I;\P_\t)$ there exists a minimal Borel vector-field $\tilde v_t(x):I\times \bar E\to\R$ such that $( q ,\tilde v)$ solves the continuity equation and
 \begin{equation}
   \|\tilde v_t\|_{L^2_\t( q _t)}  \leq  | q '|_{\W_\t}(t)
 \hspace{1cm}\mbox{for a.e. }t\in I.
  \label{eq:tilde_v_leq_mu'}
  \end{equation}
 Conversely, any solution $( q ,v)$ of the continuity equation with finite kinetic energy ${\int_I\|v_t\|^2_{L^2_\t( q _t)}\rd t<\infty}$ gives an absolutely continuous curve $ q \in AC^2(I;\P_\t)$ with
 \begin{equation}
 | q '|_{\W_\t}(t)\leq \|v_t\|_{L^2_\t( q _t)}.
 \hspace{1cm}\mbox{for a.e. }t\in I.
 \label{eq:mu'_leq_v}
 \end{equation}
 In particular, for any given $ q \in AC^2(I;\P_\t)$ the minimal vector-field $\tilde v$ as above is unique, and such that $| q '|_{\W_\t}(t)=\|\tilde v_t\|_{L^2_\t( q _t)}$ for a.e. $t$.
\end{theo}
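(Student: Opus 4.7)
The strategy follows Lisini's superposition approach to representing $AC^2$ curves in Wasserstein spaces by probability measures on the path space, adapted here to our singular Shahshahani base $(\bar E,\dS)$.

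For the forward implication, given $q\in AC^2(I;\P_\t)$, I would lift it to a probability measure $\Sigma$ on $C(I;\bar E)$ concentrated on $AC^2$ curves of $(\bar E,\dS)$, such that $(e_t)\pf\Sigma=q_t$ for every $t\in I$ and
$$
\int_{C(I;\bar E)}|\dot\gamma|_\t^2(t)\,d\Sigma(\gamma)\ \leq\ |q'|_{\W_\t}^2(t)\qquad\text{for a.e. }t\in I.
$$
Such a lifting exists by Lisini's general superposition theorem, which relies only on Polish completeness of the base space $(\bar E,\dS)$. Disintegrating $\Sigma=\int_{\bar E}\Sigma_{t,x}\,q_t(dx)$ with respect to the time-$t$ evaluation, I would then define
$$
\tilde v_t(x):=\int_{C(I;\bar E)}\dot\gamma(t)\,d\Sigma_{t,x}(\gamma),
$$
i.e.\ the conditional expectation of the Euclidean velocity at time $t$ given position $x$. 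Verifying the continuity equation in the sense of Definition~\ref{def:weak_sols_CE} then reduces to applying the chain rule $\frac{d}{dt}\varphi(\gamma_t)=\nabla\varphi(\gamma_t)\cdot\dot\gamma(t)$ along $\Sigma$-a.e.\ curve, integrating, and using Fubini. Since the weight $1/\t(x)$ depends on position only, Jensen's inequality gives $|\tilde v_t(x)|_\t^2\leq \int|\dot\gamma(t)|_\t^2\,d\Sigma_{t,x}(\gamma)$, and integration against $q_t$ combined with the lifting inequality above yields \eqref{eq:tilde_v_leq_mu'}.

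For the converse, given a solution $(q,v)$ of \eqref{eq:CE} with $\int_I\|v_t\|^2_{L^2_\t(q_t)}dt<\infty$, I would estimate $\W_\t(q_s,q_t)$ by a Kantorovich-duality argument. Plugging smooth approximations of half-squared distance potentials $\varphi(x)=\tfrac12\dS^2(x,y)$ into Definition~\ref{def:weak_sols_CE}, using Cauchy-Schwarz together with the pointwise bound $|\nabla\varphi|^2/\t\leq C\dS^2(\cdot,y)$ implicit in \eqref{eq:explicit_d_Shahshahani}, one obtains
$$
\W_\t^2(q_s,q_t)\ \leq\ (t-s)\int_s^t\|v_\tau\|^2_{L^2_\t(q_\tau)}\,d\tau,
$$
which simultaneously yields $q\in AC^2(I;\P_\t)$ and \eqref{eq:mu'_leq_v}. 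Combining with the forward inequality forces $|q'|_{\W_\t}(t)=\|\tilde v_t\|_{L^2_\t(q_t)}$ for a.e.\ $t$, and uniqueness of the minimal field follows from a standard Hilbert-space projection argument: $\tilde v_t$ is characterized as the $L^2_\t(q_t)$-orthogonal projection of any admissible $v_t$ onto the closure of the intrinsic tangent space $\overline{\{\nabla\varphi:\varphi\in C^1(\bar E)\}}$.

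The main technical obstacle is the boundary degeneracy of the Riemannian tensor $g_x=1/\t(x)$. On the one hand, lifted paths $\gamma$ supporting $\Sigma$ may visit $\partial E$ where this tensor blows up; one must argue that curves of finite $\t$-action either avoid $\partial E$ entirely or remain frozen once they reach it, so that the formal quotient $|\dot\gamma|^2/\t$ can be interpreted consistently on the boundary portion (a zero-over-zero convention, to be reconciled with the Lagrangian formulation anticipated in the paragraph preceding the theorem). On the other hand, the squared-distance potentials $\dS^2(\cdot,y)$ are only $\tfrac12$-H\"older at $\partial E$ by \eqref{eq:dS_behaves_sqrt} and therefore do not lie in $C^1(\bar E)$, so they cannot be plugged directly as test functions into Definition~\ref{def:weak_sols_CE}; one must replace them by smooth Euclidean approximations carefully tuned so that $|\nabla\varphi_\eps|^2/\t$ stays bounded up to the boundary, then pass to the limit. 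Handling these two degeneracies by cutoff and regularization, rather than any new conceptual ingredient, forms the bulk of the proof.
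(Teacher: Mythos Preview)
Your forward direction matches the paper's almost verbatim: Lisini superposition on the Polish space $(\bar E,\dS)$, disintegration, barycentric velocity $\tilde v_t(x)=\int\dot\gamma(t)\,d\Sigma_{t,x}$, Jensen. The boundary issue you flag (``zero-over-zero'') is exactly what the paper isolates as a separate lemma, showing that for $u\in AC^2(I;\bar E_\t)$ the classical derivative $\dot u_t$ exists a.e.\ and $|u'|_\t^2(t)=L(u_t,\dot u_t)$ with the extended Lagrangian \eqref{eq:def_Lagrangian}; the key observation is that the set of times where $u_t\in\partial E$ and $|u'|_\t(t)\neq 0$ is discrete, hence negligible.

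Your converse direction is genuinely different from the paper's, and as written has two issues. First, the weight is inverted: Cauchy--Schwarz against $\|v_t\|_{L^2_\t(q_t)}=(\int|v|^2/\t\,dq)^{1/2}$ pairs with the dual norm $(\int\t|\nabla\varphi|^2\,dq)^{1/2}$, so the relevant pointwise bound is $\t|\nabla\varphi|^2\leq \dS^2(\cdot,y)$ (which does hold for $\varphi=\tfrac12\dS^2(\cdot,y)$), not $|\nabla\varphi|^2/\t$. Second, and more substantively, plugging a single $\tfrac12\dS^2(\cdot,y)$ as test function does not produce $\W_\t^2(q_s,q_t)$; a duality argument for $\W_2$ requires the full Hopf--Lax machinery or optimization over $c$-concave pairs, and you would still have to justify the Hamilton--Jacobi sub-solution property up to the degenerate boundary. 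This can presumably be made to work, but it is not the quick step you suggest.

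The paper sidesteps all of this. It observes that $|v|^2/\|\t\|_\infty\leq|v|^2/\t$, so finite $\t$-weighted kinetic energy implies finite \emph{Euclidean} kinetic energy; this licenses a direct application of the Euclidean superposition principle \cite[Theorem~8.2.1]{AGS} on $[0,1]$, yielding a path measure $\zeta$ concentrated on integral curves $\dot u_t=v_t(u_t)$. The coupling $(e_{t_0},e_{t_1})_\#\zeta$ then bounds $\W_\t^2(q_{t_0},q_{t_1})\leq\int\dS^2(u_{t_0},u_{t_1})\,\zeta(du)$, and the right-hand side is estimated by the action $\int_{t_0}^{t_1}\frac{|\dot u_t|^2}{\t(u_t)}\,dt$ along each path. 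No regularization of singular potentials is needed; the boundary degeneracy is absorbed into the one elementary observation that $\t$ is bounded above.
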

The proof below will rely on the following elementary lemma:
\begin{lem}
 Define the Lagrangian
 \begin{equation}
 \label{eq:def_Lagrangian}
 L(x,\xi)=\begin{cases}
         \frac{|\xi|_\R^2}{\t(x)} & \mbox{if }x\in E,\\
         0 & \mbox{if }x\in\partial E\mbox{ and }\xi=0,\\
         +\infty & \mbox{if }x\in\partial E \mbox{ but }\xi\neq 0.
        \end{cases}
 \end{equation}
 For any curve $u\in AC^2(I;\bar E_\t)$ and a.e. $t\in I$, the classical derivative
 $$
 \dot u_t=\lim\limits_{ h\to 0}\frac{u_{t+h}-u_t}{h}
 $$
 exists and
 \begin{equation}
 \label{eq:udot_Luxi}
 |u'|_\t^2(t)=L(u_t,\dot u_t).
 \end{equation}
 \label{lem:udot}
\end{lem}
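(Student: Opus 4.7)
The plan is to reduce the statement to the classical Lebesgue theory of Euclidean absolutely continuous real-valued functions via the explicit Shahshahani coordinate
\[
\Phi(x):=\int_0^x \frac{dz}{\sqrt{\theta(z)}},\qquad x\in\bar E,
\]
which by \eqref{eq:explicit_d_Shahshahani} is a strictly increasing homeomorphism $(\bar E,\dS)\to([0,D],|\cdot|_\R)$ with $D=\Phi(1)$, and which is an isometry in the sense that $|\Phi(y)-\Phi(x)|_\R=\dS(x,y)$ for all $x,y\in\bar E$. Setting $v_t:=\Phi(u_t)$, the isometry immediately promotes $u\in AC^2(I;\bar E_\theta)$ to $v\in AC^2(I;[0,D])$ in the usual Euclidean sense, with matching metric speed $|v'|_\R(t)=|u'|_\theta(t)$ for a.e. $t$. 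By the classical Lebesgue differentiation theorem the classical derivative $\dot v_t$ therefore exists for a.e. $t\in I$, with $|\dot v_t|=|u'|_\theta(t)$.

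The interior case is then essentially automatic: at any $t$ such that $\dot v_t$ exists and $u_t\in E$, $\Phi$ is a $C^\infty$-diffeomorphism on a neighborhood of $u_t$ into which $u_{t+h}$ falls for $|h|$ small by continuity, so the inverse function theorem yields $\dot u_t=\sqrt{\theta(u_t)}\,\dot v_t$ and consequently
\[
L(u_t,\dot u_t)=\frac{|\dot u_t|^2}{\theta(u_t)}=|\dot v_t|^2=|u'|_\theta^2(t),
\]
as desired.

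The delicate case is boundary-hitting times. Let $A:=\{t\in I:u_t\in\partial E\}=A_0\cup A_1$ with $A_i:=\{t:u_t=i\}$, corresponding via $\Phi$ to the level sets $\{v_t=0\}$ and $\{v_t=D\}$. The standard real-analysis fact that an absolutely continuous function has a.e. vanishing derivative on any of its level sets implies $\dot v_t=0$ for a.e. $t\in A$, and hence $|u'|_\theta^2(t)=|\dot v_t|^2=0$ on such a full-measure subset of $A$. To conclude $L(u_t,\dot u_t)=0$ there, I still need to check that $\dot u_t$ exists and equals zero; this uses the precise boundary asymptotics of $\Phi$. From $\Phi(x)\sim c_0\sqrt x$ as $x\to 0^+$ (a consequence of the linear vanishing of $\theta$ in Assumption~\ref{ass:theta}, and equivalent to \eqref{eq:dS_behaves_sqrt}) we invert to $\Phi^{-1}(v)=O(v^2)$ near $v=0$, and similarly $1-\Phi^{-1}(v)=O((D-v)^2)$ near $v=D$. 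At a point $t\in A_0$ with $\dot v_t=0$ we have $v_{t+h}=o(h)$, whence $u_{t+h}=\Phi^{-1}(v_{t+h})=O(v_{t+h}^2)=o(h^2)$ and in particular $\dot u_t=0$; the argument at $A_1$ is identical.

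The main (and really the only) obstacle is the non-differentiability of $\Phi^{-1}$ at the endpoints $\{0,D\}$, which has a vertical tangent reflecting the metric singularity of $\dS$ at $\partial E$. The interior argument therefore cannot be pushed naively to the boundary, and one genuinely needs to combine (i) the level-set rigidity of Euclidean AC functions applied to $v$ and (ii) the sharp Hölder-$1/2$ boundary decay \eqref{eq:dS_behaves_sqrt} in order to force $\dot u_t=0$ at boundary hitting times despite the singular inverse $\Phi^{-1}$.
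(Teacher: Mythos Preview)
Your proof is correct and shares the same backbone as the paper's: both reduce to Euclidean real-variable theory via the arclength isometry $\Phi:(\bar E,\dS)\to([0,D],|\cdot|_\R)$ (explicit in your argument, implicit in the paper through the $\sqrt{\cdot}$ asymptotics \eqref{eq:dS_behaves_sqrt}), and the interior case is handled identically.

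The genuine difference lies in the boundary treatment. The paper first shows that $\dot u_{t_0}=0$ at \emph{every} boundary hitting time where the metric speed exists (observing that $\sqrt{u_{t_0+h}}/h\to C|u'|_\theta(t_0)$, so $t\mapsto\sqrt{u_t}$ is differentiable and the chain rule gives $\dot u_{t_0}=2\sqrt{u_{t_0}}\,\frac{d}{dt}\sqrt{u_t}=0$), and then deals with the mismatch $L(u_{t_0},0)=0$ versus $|u'|_\theta(t_0)$ by arguing that the ``bad set'' $N=\{t:u_t\in\partial E,\ |u'|_\theta(t)\neq 0\}$ is \emph{discrete}: nonzero metric speed forces the curve to move away from the boundary in a punctured neighbourhood of $t_0$, so such times are isolated. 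You instead invoke the standard level-set property of Euclidean AC functions applied to $v=\Phi(u)$ to obtain $\dot v_t=0$ (and hence $|u'|_\theta(t)=0$) for a.e.\ $t$ in the boundary set directly, and then upgrade $\dot v_t=0$ to $\dot u_t=0$ via the sharp quadratic behaviour $\Phi^{-1}(v)=O(v^2)$. Your route is arguably cleaner and more systematic, avoiding the slightly ad hoc discreteness argument; the paper's route has the incidental byproduct that $\dot u_t=0$ holds at \emph{all} boundary hitting times where the metric speed exists (not just almost all), though this extra precision is never used downstream.
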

Let us emphasize once again that, in the left-hand side of \eqref{eq:udot_Luxi}, $|u'|_\t$ denotes the metric speed computed w.r.t. the Shahshahani distance $\dS$.
\begin{proof}
 Observe first that $u\in AC^2(I;\bar E_\t)$ is $\dS$-continuous in time, hence also continuous in the classical sense (the $\dS$ topology being equivalent to the Euclidean topology).
As a consequence $u_{t_0}$ can be unambiguously evaluated at any $t_0\in I$, and we treat separately $u_{t_0}\in\partial E$ and $u_{t_0}\in E$.
 The case of an interior point $u_{t_0}\in E$ falls within standard, smooth Riemannian geometry and the proof is immediate, see e.g. \cite[Section 2.1]{lisini2009nonlinear}.
 For a boundary point, assume e.g. $u_{t_0}=0$ (the case $u_{t_0}=1$ is similar), and assume without loss of generality that $t_0$ is a ``metric differentiability point'' for $u$ (i-e the metric derivative $|u'|_\t(t_0)$ exists, which is indeed guaranteed for a.e. $t$ by standard properties of the metric speed).
 Then by \eqref{eq:dS_behaves_sqrt} we have that
 $$
 \frac{\sqrt{u_{t_0+h}}-\sqrt{u_{t_0}}}{h}
 =\frac{\sqrt{u_{t_0+h}}}{h}
 \sim C\frac{\dS(u_{t_0+h},u_{t_0})}{h}\to C|u'|_\t(t_0)<+\infty,
 $$
 i-e $t\mapsto\sqrt{u_t}$ is differentiable at $t_0$.
 By standard chain-rule we see that the usual derivative
 $$
 \dot u_{t_0}=2\sqrt{u_{t_0}}\left.\frac{d}{dt}\right|_{t_0}\sqrt{u_t}=0
 $$
 exists and vanishes, hence by definition $L(u_{t_0},\dot u_{t_0})=0$.
 The subtle issue here is that, although $L(u_{t_0},\dot u_{t_0})=0$ at that particular time $t=t_0$, it might happen that $|u'|_\t(t_0)\neq 0$ there, thus violating our claim that $|u'|_\t^2(t)=L(u_t,\dot u_t)$.
 Note however that we only claim equality for almost every $t$: 
 Hence, it suffices to prove that the ``bad set'' $N\subset I$ of such times $t_0\in I$ where $u_{t_0}=0$ (or $u_{t_0}=1$) with $|u'|_\t(t_0)\neq 0$ is negligible.
 By definition, having non-zero metric speed implies that the curve is really moving, and more precisely $\dS(u_t,u_{t_0})\sim |u'|_\t(t_0)|t-t_0|\neq 0$ for $t\neq t_0$ close enough to $t_0$.
 As a result $u_t\neq 0$ for such times, which means that $N$ is actually a discrete set and therefore Lebesgue-negligible as claimed.
  \end{proof}
\begin{proof}[Proof of Theorem~\ref{theo:AC2_curves}]
 With Lemma~\ref{lem:udot} now at hand, the proof is almost identical to \cite[Theorem 2.4]{lisini2007characterization} so we briefly give the main lines and only highlight the slight differences.
 The main idea is that, in an abstract metric framework, absolutely continuous curves in an overlying Wasserstein space can always be recovered as suitable superposition of absolutely continuous curves in the associated underlying space \cite{lisini2007characterization}.
 Whenever that underlying metric space is in fact a Riemannian manifold with induced Riemannian distance, one can go one step further and relate the metric speed of an underlying curve to the Riemannian norm of its usual velocity (its time derivative, i-e a tangent vector).
 This allows reconstructing the velocity field $\tilde v_t(x)$ in Theorem~\ref{theo:AC2_curves} essentially by superposing  the velocities of all Lagrangian particles simultaneously sitting at position $x$ at time $t$.
 The only subtle point here, compared to \cite{lisini2007characterization}, is that our underlying metric space $(\bar E,\dS)$ is not properly speaking a smooth Riemannian manifold and therefore the singular boundary points must be dealt with separately, which is done thanks to Lemma~\ref{lem:udot}.
 
 More precisely, let $\Gamma=C(I;\bar E_{\t})$ be the space of continuous curves in $\bar E$ with induced $\dS-\sup$ distance.
 Owing to \cite[Theorem 5]{lisini2007characterization}, valid in general metric spaces, and given a curve $ q \in AC^2(I;\P_\t)$ as in our statement, there is a path-measure $\eta\in\P(\Gamma)$ such
 \begin{enumerate}[(i)]
  \item 
  $\eta$ is concentrated on $AC^2(I;\bar E_\t)$
  \item
  $ q _t={e_t}_\#\eta$ for all $t\in I$, where $e_t(u)=u_t$ is the time-$t$ evaluation map
\item  
the metric speed is represented by superposition
$$
| q '|^2_{\W_{\t}}(t)=\int_\Gamma|u'|^2_{\t}(t)\,\eta(\rd u)
$$
 \end{enumerate}
By disintegration, there exists a (unique) Borel family of probability measures $\eta_{t,x}\in \P(\Gamma)$ and concentrated on $\Gamma_{t,x}=\{u\in\Gamma:\quad u_t=x\}$ such that
\begin{equation*}
\int_{I\times \Gamma}\varphi(t,u)\,\rd t\, \eta(\rd u)
=
\int_{I\times\bar E}\left(\int_\Gamma\varphi(t,u)\,\eta_{t,x}(\rd u)\right) q _t(\rd x)\rd t
\end{equation*}
for all $\varphi\in C(I\times \Gamma)$.
The velocity field $\tilde v$ is then defined as the superposition of all Lagrangian speeds
\begin{equation}
\label{eq:def_v_tilde}
\tilde v_t(x)\coloneqq \int_\Gamma\dot u_t\,\eta_{t,x}(\rd u).
\end{equation}
A keypoint is that, exactly as in \cite{lisini2009nonlinear}, this is unambiguously defined for $\rd q _t\otimes\rd t$-almost every $x,t$ because the paths $u$ charged by $\eta$ (or $\eta_{t,x}$) are $AC^2$, here specifically $AC^2(I;\bar E_\t)$.
Hence the classical derivative $\dot u_t$ is well-defined for a.e. $t\in I$ according to Lemma~\ref{lem:udot}.
By Jensen's inequality and our representation of the metric speed $|u'|_\t^2(t)=\frac{|\dot u_t|_\R^2}{\t(u_t)}$ from Lemma~\ref{lem:udot} we get
\begin{multline*}
 \int_I\|\tilde v_t\|^2_{L^2_\t( q _t)}\rd t
 =\int_{I\times\bar E}\frac{|\tilde v_t(x)|^2}{\t(x)} q _t(\rd x)\rd t
 \\
 =\int_{I\times\bar E}\frac{\left|\int_\Gamma \dot u_t\,\eta_{t,x}(\rd u)\right|^2}{\t(x)} q _t(\rd x)\rd t
 \leq
 \int_{I\times\bar E}\left(\int_\Gamma \frac{|\dot u_t|^2}{\t(x)}\eta_{t,x}(\rd u)\right) q _t(\rd x)\rd t
 \\
 =
 \int_{I\times \Gamma}\frac{|\dot u_t|^2}{\t(u_t)}\,\rd t\,\eta(\rd u)
 \overset{\eqref{eq:udot_Luxi}}{=}\int_{I\times \Gamma}|u'|^2_{\t}(t)\rd t\,\eta(\rd u)
 =\int_I| q '|^2_{\W_\t}(t)\,\rd t.
\end{multline*}
One can actually carry over the exact same computation in any time interval $[t_0,t_1]\subset I$,
which allows to conclude that $\int_{t_0}^{t_1}\|\tilde v_t\|^2_{L^2_\t( q _t)}\rd t\leq \int_{t_0}^{t_1}| q '|^2_{\W_\t}(t)\rd t$ and thus
$$
\|\tilde v_t\|_{L^2_\t(\rd q _t)} \leq | q '|(t)\qquad \mbox{for a.e. }t\in I.
$$
In order to prove that $( q ,\tilde v)$ solve the continuity equation as claimed, observe first that, from \eqref{eq:explicit_d_Shahshahani} and the mean value Theorem, we have $ \dist(x,y)=\int_x^y\frac{1}{\sqrt{\t(z)}}\rd z=\frac{|x-y|}{\sqrt{\t(z)}}$ for some $z=z_{x,y}\in(x,y)$ and as a consequence
$$
 |x-y|\leq \|\sqrt\t\|_\infty\dist(x,y)
$$
for any $x,y\in\bar E$.
In particular any $\d_\R$-Lipschitz map $\varphi:\bar E\to\R$ is $\dS$-Lipschitz as well.
Take now any $\varphi\in C^1(\bar E)$ and write, for any optimal plan $\gamma_{t_0,t_1}$ from $ q _{t_0}$ to $ q _{t_1}$,
\begin{multline*}
\left|\int_{\bar E} \varphi\, q _{t_1}- \int_{\bar E} \varphi\, q _{t_0}\right|
=\left|\int_{{\bar E}\times{\bar E}} \left(\varphi(x)-\varphi(y)\right) \gamma_{t_0,t_1}(\rd x,\rd y)\right|
\leq \int_{{\bar E}\times{\bar E}}|\varphi(x)-\varphi(y)|\gamma_{t_0,t_1}(\rd x,\rd y)
\\
\leq L \int_{{\bar E}\times{\bar E}}\dS(x,y)\gamma_{t_0,t_1}(\rd x,\rd y)
\leq L \left( \int_{{\bar E}\times{\bar E}}\dS^2(x,y)\gamma_{t_0,t_1}(\rd x,\rd y)\right)^{\frac{1}{2}}
= L\,\W_{\t}( q _{t_0}, q _{t_1}),
\end{multline*}
where of course $L=\operatorname{Lip}_{\dS}(\varphi)$.
Since $ q \in AC^2([0,1];\P_\t)$ this shows that $t\mapsto\int_{\bar E}\varphi(x) q _t(\rd x)$ is $AC^2$ (in the classical sense), hence we can legitimately differentiate under the integral sign as
\begin{multline*}
 \frac d{dt}\int_{\bar E}\varphi(x)\, q _t(\rd x)
 =\frac d{dt}\int_\Gamma\varphi(u_t)\,\eta(\rd u)
 =\int_\Gamma \left[\frac d{dt}\varphi(u_t)\right]\,\eta(\rd u)
 \\
 =\int_\Gamma\nabla\varphi(u_t)\cdot \dot u_t\,\eta(\rd u)
 =\int_0^1\int_{\bar E} \nabla\varphi(x)\cdot \tilde v_t(x)\, q _t(\rd x)\rd t.
\end{multline*}
Here the last equality follows from the very definition \eqref{eq:def_v_tilde} of $\tilde v$ as $\eta$-superposition of Lagrangian velocities $\dot u$.

Let us now turn to the converse implication, and assume that $( q ,v)$ solve the continuity equation in $I\times {\bar E}$ with finite $L^2_\t$ kinetic energy.
Without loss of generality we assume that $I=(0,T)$.
The key step is to apply a probabilistic representation for solutions of the continuity equation, allowing somehow to retrieve a notion of Lagrangian trajectories although the driving velocity field $v_t(x)$ could be very rough, see \cite[Section 8.2]{AGS} and \cite{ambrosio2008transport}.
Observe that $\frac{|v_t(x)|^2}{\|\t\|_\infty} \leq \frac{|v_t(x)|^2}{\t(x)} $, hence $( q ,v)$ solve the continuity equation with finite Euclidean kinetic energy $\int_I\int_{\bar E} |v_t(x)|_\R^2\, q _t(\rd x)\,\rd t<\infty$.
This allows a direct application of \cite[Theorem 8.2.1.i]{AGS} and we conclude that there exists a probability measure $\eta\in P({\bar E}\times\Gamma)$ (here $\Gamma=C(I;{\bar E})$), supported on pairs $(x,u)$ such that $u\in AC^2(I;\bar E_\R)$  with $u_0=x$ and solving $\dot u_t=v_t(u_t)$ in the integral sense.
Moreover for all $t\in I$ the measure $ q _t$ is the pushforward of $\eta$ by the evaluation map $e_t$, i-e
$$
\int_{\bar E} \varphi(x) q _t(\rd x)=\int_{{\bar E}\times\Gamma}\varphi(u_t)\,\eta(\rd x,\rd u)
$$
for all $\varphi\in C({\bar E})$.
The second marginal $\zeta\in\P(\Gamma)$ of $\eta$ is thus supported on $AC^2_\R$ solutions of $\dot u_t=v_t(u_t)$ and such that
\begin{equation}
\int_{\bar E} \varphi(x) q _t(\rd x)=\int_{\Gamma}\varphi(u_t)\,\zeta(\rd u).
\label{eq:def_zeta_path_measure}
\end{equation}
Now for any $t_0<t_1$ the measure $\gamma_{t_0,t_1}\coloneqq (e_{t_0},e_{t_1})_\#\zeta\in \P({\bar E}\times{\bar E})$ is an admissible coupling between $ q _{t_0}, q _{t_1}$ and therefore
\begin{equation*}
 \W_\t^2( q _{t_0}, q _{t_1})\leq \int_{{\bar E}\times{\bar E}}\dS^2(x,y)\rd \gamma_{t_0,t_1}(x,y)
 =\int_{\Gamma}\dS^2(u_{t_0},u_{t_1})\,\zeta(\rd u).
\end{equation*}
Since $\dot u_t=v_t(u_t)$ for $\zeta$-a.e. $u\in\Gamma$, we have, by definition \eqref{eq:explicit_d_Shahshahani} of $\dS$, interpreting the quotient $\frac{|\xi|^2}{\t(x)}$ as before, and scaling time appropriately,
\begin{multline*}
\int_{\Gamma}\dS^2(u_{t_0},u(t_1))\,\zeta(\rd u)
\leq 
\int_{\Gamma}\left((t_1-t_0)\int_{t_0}^{t_1}\frac{|\dot u_t|^2}{\t(u_t)}\,\rd t\right)\zeta(\rd u)
\\
= (t_1-t_0)  \int_{\Gamma}\int_{t_0}^{t_1}\frac{|v_t(u_t)|^2}{\t(u_t)}\,\rd t\, \zeta(\rd u)
=(t_1-t_0)  \int_{t_0}^{t_1}\int_{\Gamma}\frac{|v_t(u_t)|^2}{\t(u_t)}\, \zeta(\rd u)\,\rd t
\\
=(t_1-t_0)  \int_{t_0}^{t_1}\int_{{\bar E}}\frac{|v_t(x)|^2}{\t(x)}\,  q _t(\rd x)\,\rd t.
\end{multline*}
Here we made a crucial use of \eqref{eq:def_zeta_path_measure} for the last equality.
Since $t_0<t_1$ were arbitrary this shows that $ q \in AC^2(I;\P_\t)$ and entails \eqref{eq:mu'_leq_v}.

Finally, uniqueness of the minimal velocity field $\tilde v$ is an easy consequence of the linearity of the continuity equation constraint and of the strong convexity of the $L^2$ norm.
\end{proof}

\subsection{Displacement convexity and metric slope}
\label{subsec:convexity_slope}
Let us recall from Section~\ref{sec:Kimura_Qprocess} the expressions
$$
\pi(\rd x)=\frac{1}{\int_{\bar E}\eta(x)\alpha(\rd x)}\eta(x)\alpha(\rd x)
=\frac{1}{\mathcal Z}e^{-(\tilde U+\log\t)(x)}\rd x =
\frac{1}{\mathcal Z}e^{-V(x)}\rd x
$$
for our stationary measure of the $Q$-process.
For reasons that shall become clear later on, let us denote
\begin{equation}
 \label{eq:def_V_W}
 W\coloneqq V-\log\sqrt\t.
\end{equation}
Typically writing
$$
\sigma(x)=\frac{\rd  q }{\rd \pi}(x)
$$
for the Radon-Nykodim derivative w.r.t. the reference measure, the relative entropy $\Hpi:\P(\bar E)\to (-\infty,\infty]$ is classically defined as
\begin{equation}
\label{eq:def_Hpi}
 \Hpi( q )\coloneqq 
 \begin{cases}
  \int_{\bar E} \sigma(x)\log\sigma(x) \,\pi(\rd x) & \mbox{if } q (\rd x)=\sigma(x)\pi(\rd x),\\
  +\infty & \mbox{else}.
\end{cases}
\end{equation}
This is well-known to be lower semi-continuous w.r.t. narrow convergence of measures.
Moreover it is not difficult to check that $\H_\pi$ has dense domain $\overline{\Dom(\H_\pi)}=\P_\theta(\bar E)$.
%
Note that, since $\pi(\rd x)=C\eta(x)\alpha_0(x)\rd x$ is absolutely continuous, any $q\ll\pi$ is also absolutely continuous w.r.t. to the Lebesgue measure.
Thus writing $q(\rd x)=q(x) \rd x$, our entropy \eqref{eq:def_Hpi} can also be written as
$$
\H_\pi(q)=\int_{\bar E}q(x)\log q(x)\,\rd x + \int_{\bar E} q(x)V(x)\,\rd x.
$$
(Here we assumed w.l.o.g. that the normalizing factor $\mathcal Z=1$ and $V=-\log \pi$.)

\begin{prop}
\label{prop:geodesic_convexity_alpha}
Let $W$ be as in \eqref{eq:def_V_W}.
Then
\begin{equation}
 \label{eq:def_lambda_mod_convexity}
 \lambda\coloneqq 
 \min\limits_{x\in(0,1)}\left\{\sqrt{\t}\frac{d}{dx}\left(\sqrt{\t}\frac{d}{dx}W\right)\right\}.
 \end{equation}
 is a finite real number and the entropy $ q \mapsto \H_\pi( q )$ is $\lambda$-displacement convex along any $\W_\t$-geodesic $( q _t)_{t\in[0,1]}$ between $ q _0, q _1\in \P_\t(E)$,
 \begin{equation}
 \label{eq:convexity_E}
 \H_\pi( q _t)\leq (1-t)\H_\pi( q _0)+t\H_\pi( q _1)-\lambda\frac{t(1-t)}{2}\W_\t^2( q _0, q _1),
 \qquad \forall\,t\in[0,1].
 \end{equation}
\end{prop}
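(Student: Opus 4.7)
The plan is to flatten the Shahshahani geometry and reduce the statement to the classical Euclidean displacement convexity of the relative entropy on a bounded interval. The key observation is that in one spatial dimension the Shahshahani metric is actually flat: the arc-length parametrization $\psi(x)=\int_0^x\rd z/\sqrt{\t(z)}$, which is finite at $x=1$ by Assumption~\ref{ass:theta} and \eqref{eq:dS_behaves_sqrt}, is a homeomorphism of $\bar E$ onto $[0,L]$ with $L=\psi(1)$. By the explicit formula \eqref{eq:explicit_d_Shahshahani} it is in fact an \emph{isometry} from $(\bar E,\dS)$ onto $([0,L],|\cdot|)$, and its push-forward $\psi_\#$ induces an isometric bijection from $(\P(\bar E),\W_\t)$ onto $(\P([0,L]),W_2)$, where $W_2$ is the usual quadratic Wasserstein distance. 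In particular $\W_\t$-geodesics are sent to Euclidean Wasserstein geodesics.

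Next I would transport the entropy. Setting $\tilde q\coloneqq\psi_\# q$ and $\tilde\pi\coloneqq\psi_\#\pi$, the invariance of relative entropy under bijective change of variables gives $\Hpi(q)=\H_{\tilde\pi}(\tilde q)$. A direct computation using the Jacobian $(\psi^{-1})'(y)=\sqrt{\t(\psi^{-1}(y))}$ identifies
$$\tilde\pi(\rd y)=\frac{1}{\mathcal Z}e^{-\tilde W(y)}\,\rd y,\qquad \tilde W(y)\coloneqq W(\psi^{-1}(y)),$$
precisely because the $-\tfrac12\log\t$ term in the definition \eqref{eq:def_V_W} of $W=V-\log\sqrt\t$ is exactly what absorbs the Jacobian of the change of variables. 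The classical McCann theorem \cite{mccann1997convexity} (see also \cite{AGS}) then asserts that, on the flat interval $[0,L]$, $\H_{\tilde\pi}$ is $\tilde\lambda$-displacement convex iff $\tilde W''\geq\tilde\lambda$ pointwise. Pulling this convexity back through $\psi$ yields \eqref{eq:convexity_E} once one identifies $\inf_y\tilde W''(y)$ with the $\lambda$ of \eqref{eq:def_lambda_mod_convexity}.

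This identification is a routine chain-rule computation: using $(\psi^{-1})'=\sqrt{\t\circ\psi^{-1}}$ one finds
$$\tilde W''(y)=\Big(\t(x)W''(x)+\tfrac12\t'(x)W'(x)\Big)\Big|_{x=\psi^{-1}(y)}=\left.\sqrt\t\,\frac{d}{dx}\!\left(\sqrt\t\,\frac{dW}{dx}\right)\right|_{x=\psi^{-1}(y)},$$
which is precisely the expression in \eqref{eq:def_lambda_mod_convexity}. It then remains to check that $\lambda$ is finite. Using the linear vanishings $\eta(x)\sim\eta'(0)x$ and $\t(x)\sim\t'(0)x$ near $x=0^+$ one gets $W(x)=-\tfrac32\log x+O(1)$, hence $\sqrt\t(\sqrt\t W')'=\tfrac34\t'(0)/x+O(1)\to+\infty$ as $x\to 0^+$ (and symmetrically as $x\to 1^-$), so the infimum is attained on a compact subinterval of $(0,1)$ where $W$ is smooth, and $\lambda>-\infty$.

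The only point that requires care is to legitimize the Euclidean McCann argument despite the boundary blow-up of $\tilde W$ at $y\in\{0,L\}$. This is in fact automatic: the hypothesis $q_0,q_1\in\P_\t(E)$ forces $\tilde q_0,\tilde q_1$ to be supported strictly inside the open interval $(0,L)$, and since the McCann interpolation in 1D is obtained by pushing along a monotone map, the entire interpolant $\tilde q_t$ remains compactly contained in $(0,L)$ for all $t\in[0,1]$ by convexity of the transport rays; the argument thus never sees the endpoint singularities of $\tilde W$, and the convexity inequality transfers cleanly back to $(\P(\bar E),\W_\t)$.
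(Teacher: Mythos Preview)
Your approach is essentially identical to the paper's: both flatten the Shahshahani geometry via the arc-length isometry $\psi=i$ onto a Euclidean interval, push the entropy forward, and then invoke a classical displacement-convexity criterion (you cite McCann, the paper cites Sturm's \cite{sturm2005convex}, which amounts to the same thing here since $\Ric\equiv 0$). The chain-rule identification of $\tilde W''$ with \eqref{eq:def_lambda_mod_convexity} and the boundary asymptotics yielding $\lambda>-\infty$ are also the same.

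One inaccuracy in your last paragraph: the hypothesis $q_0,q_1\in\P_\t(E)$ does \emph{not} force $\tilde q_0,\tilde q_1$ to be \emph{compactly} supported in $(0,L)$ --- e.g.\ $q_0=\pi$ itself has full support on $(0,1)$ --- so the interpolant need not stay away from the endpoints and the argument cannot simply ``never see'' the singularity of $\tilde W$. This is not fatal: if either entropy is $+\infty$ the inequality is vacuous, and if both are finite then $\tilde q_0,\tilde q_1\ll\rd y$, the McCann interpolant stays absolutely continuous, and since $\tilde W\to+\infty$ (not $-\infty$) at the endpoints the potential energy is well-defined in $(-\infty,+\infty]$ and the standard convexity computation along transport rays goes through. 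The paper sidesteps this by directly appealing to \cite[Theorem 1.3]{sturm2005convex}, which is stated for arbitrary probability measures on $\bar F$.
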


\begin{proof}[Direct but formal proof]
It is by now well understood \cite[Theorem 1.3]{sturm2005convex} that, on a smooth, complete Riemannian manifold $(M,g)$, and given a smooth potential $W:M\to \R$, the functional
\begin{equation}
\mathcal S_W( q )= \int_M \rho(x)\log\rho(x)\, \Vol (\rd x) + \int_M W(x)\,q(\rd x) ,
\qqtext{with}
q(\rd x)=\rho(x)\Vol(\rd x)
\label{eq:def_entropy_Sturm}
\end{equation}
is $\lambda$-geodesically convex in the Wasserstein space $\mathcal P_g(M)$ (built upon the Riemannian distance induced by $g$) if and only if
\begin{equation}
\Ric_x + \Hess_x W\geq \lambda,
\hspace{1cm}\forall\,x\in M.
\end{equation}
Note now that the standard formula $\Vol(\rd x)=\sqrt{\operatorname{Det}g_x}\rd x$ gives here $\Vol(\rd x)=\frac{1}{\sqrt{\t(x)}}\rd x$.
Changing reference measure $q(\rd x)= \sigma(x)\pi(\rd x)=\rho(x)\Vol(\rd x)$, straightforward algebra shows that our entropy reads in the more intrinsically Riemannian form
$$
\H_\pi(q)=\mathcal S_W(q)
\qqtext{with} W \mbox{ as in }\eqref{eq:def_V_W}.
$$
In our flat one-dimensional geometry we have of course $\Ric_x\equiv 0$ hence it suffices to check $\Hess W\geq \lambda$.
In intrinsic Riemannian coordinates $y$ we have $\partial_y =\frac1{\sqrt g_x}\partial_x=\sqrt{\t(x)}\partial_x$, hence \eqref{eq:def_lambda_mod_convexity} is exactly $\lambda\coloneqq \min \Hess W$ expressed in Euclidean coordinates.
\end{proof}
This proof is of course not acceptable because $(\bar E,\dS)$ is not a smooth, complete Riemannian manifold.
We shall bypasses this issue thanks to a purely one-dimensional isometry argument.
\begin{proof}
To begin with, let us briefly justify that \eqref{eq:def_lambda_mod_convexity} is indeed finite.
Rewinding the two successive steps \eqref{eq:def_Utilde_V}\eqref{eq:def_V_W} relating the potentials $U,\tilde U, V, W$ gives $W=U +\log \left(\frac{\sqrt{\theta}}{\eta^2}\right)$.
Since $U$ is smooth up to the boundary and $\eta,\t>0$ in the interior this is a smooth function in the interior.
On the left boundary $x\to 0$ (the case $x\to 1$ is identical) we recall that $\eta,\t$ vanish linearly.
Hence as $x\to 0$ we have $W\sim \log\left(x^{-3/2}\right),\t(x)\sim C x$ and therefore
$$
x\to 0:
\qquad
\sqrt{\t}\frac{d}{dx}\left(\sqrt{\t}\frac{d}{dx}W\right)
\sim C\sqrt{x}\frac{d}{dx}\left(\sqrt{x}\frac d{dx}\log\left(x^{-3/2}\right)\right)
=\frac{C'}{x}\to +\infty.
$$
(this formal differentiation can be made completely rigorous by expanding out and keeping track of the leading terms).
Thus this function blows-up to $+\infty$ as $x\to 0^+$, and similarly for $x\to 1^-$.
As a consequence $\inf=\min$ is attained for a finite value at an interior point in \eqref{eq:def_lambda_mod_convexity}.

In order to establish the more difficult displacement convexity, let $D=\operatorname{diam}(\bar E)=\dS(0,1)$ and $F=(0,D)$.
In dimension $d=1$ the arclength map
 $$
 \begin{array}{crcl}
 i: & \bar E & \rightarrow & \bar \D\\
    & x     & \mapsto & \dS(0,x)
    \end{array}
 $$
 gives an isometry between the metric spaces $(\bar E,\dS)$ and $(\bar\D,\dist_\R)$, the latter being equipped with the Euclidean distance.
 This induces in turn an isometry
 $$
 \begin{array}{crcl}
 I: & \P_\t(\bar E) & \rightarrow & \P_\R(\bar\D)\\
    &  q     & \mapsto & i_\# q 
    \end{array}
 $$
 between the Wasserstein spaces $(\P(\bar E),\W_{\t})$ and $(\P(\bar\D),\W_\R)$.
 Moreover, by standard properties of the pushforward operation, it is easy to check that, if
 $$
 \bar q \coloneqq I( q )\coloneqq i_\#  q 
 \qqtext{and}
 \bar\pi\coloneqq I(\pi)\coloneqq i_\# \pi,
 $$
 then the Radon-Nykodim derivative $\bar\sigma(y)\coloneqq \frac{\rd \bar q }{\rd \bar\pi}(y)=\frac{\rd  q }{\rd \pi}(i^{-1}(y))=\sigma(i^{-1}(y))$.
 This gives
 $$
 \bar{\H}_{\bar\pi}( \bar q )\coloneqq
 \int_{\bar\D}\bar\sigma(y)\log\bar\sigma(y)\bar\pi(\rd y) 
 =\int _{\bar E} \sigma(x)\log\sigma(x)\pi(\rd x)
 =
 \H_{\pi}( q ).
 $$
 Since $y=i(x)$ is a bijection so is $\bar q =I( q )$, and because the isometry $I$ maps $\W_\t$-geodesics to $\W_\R$-geodesics, clearly \eqref{eq:convexity_E} will be established if we can show that
 $$
 \bar{\H}_{\bar\pi}(\bar q _t)\leq (1-t)\bar{\H}_{\bar\pi}(\bar q _0)+t\bar{\H}_{\bar\pi}(\bar q _1)-\lambda\frac{t(1-t)}{2}\W_\R^2(\bar q _0,\bar q _1),
 \qquad t\in[0,1]
 $$
 for all $\W_\R$-geodesic $(\bar q _t)_{t\in [0,1]}$ joining arbitrary measures $\bar q _0,\bar q_1\in\P_\R(\bar\D)$.
 The main advantage is that this is now written with respect to the Wasserstein distance based upon $\bar \D=[0,D]$ endowed with the smooth, complete, Euclidean metric.
 This allows the completely rigorous application of \cite[Theorem 1.3]{sturm2005convex}, in the simplest of all settings since there is no Ricci curvature in dimension $d=1$.
 More precisely, if $\bar W(y)\coloneqq cst-\log \bar\pi(y)$ is such that
 $$
 \bar\pi(\rd y)=\frac{1}{\mathcal Z}\exp(-\bar W(y))\rd y
 \qqtext{with}
 \bar\H_{\bar\pi}(\bar q)=\int_{\bar \D}\bar q(y)\log\bar q(y)\,\rd y + \int_{\bar \D}\bar W(y) \bar q(y)\,\rd y-cst,
 $$
 the modulus of displacement-convexity $\lambda$ of $\bar{\H}_{\bar\pi}$ (hence of $\H_\pi$) is given by any modulus of (linear) convexity $\frac{d^2}{dy^2}\bar W(y)\geq \lambda$.
 The point here is that $\rd y$ is now really the volume form on $\bar \D$, and all the pre-isometry issues have now been encoded in the explicit potential $\bar W$.
 By definition of the pushforward $\bar\pi=i\pf \pi$ it is easy to see that
 $\bar\pi(y) =\frac{\pi}{|\partial_x i|}\circ i^{-1}(y)
=[\sqrt{\t}\pi] \circ i^{-1}(y)
 $.
 In particular
 $$
\bar W(y)= cst -\log\bar\pi(y)
=cst-\log(\pi\sqrt{\t})(i^{-1}(y))
=cst-\log(\pi(i^{-1}(y)))-\frac{1}{2}\log\t(i^{-1}(y)).
 $$
 With this new explicit formula at hand and $y=i(x)$, $\frac{d}{dy}=\sqrt{\t(x)}\frac{d}{dx}$, straightforward calculus finally confirms that $\frac{d^2}{dy^2}\bar W(y)\geq \lambda$ is exactly equivalent to \eqref{eq:def_lambda_mod_convexity}.
 \end{proof}
When attempting to naively investigate displacement convexity for the unconditioned model, one would immediately face the potential $\hat U(x)=U(x)+\log\t(x)$ as in section~\ref{sec:Kimura_not_GF}.
The effect of the Shahshahani geometry would induce the very same $-\log\operatorname{vol}(x)=-\frac 12\log\t(x)$ compression (that appeared in the previous proof by isometry), but any putative modulus of displacement convexity would still be given in the end by the modulus of Riemannian convexity of $U+\log \t-\frac 12 \log \t=U+\frac 12 \log \t$.
The $+\log \t$ term blowing down to $-\infty$ on the boundary actually makes this singularly concave as a function of $x$ (even in Riemannian coordinates), and as a result the naive entropy from section~\ref{sec:Kimura_not_GF} would not be $\lambda$-displacement convex for any $\lambda\in \R$.
This is yet another hint that conditioning is really needed.
\\

With convexity now being settled, let us turn to an equally fundamental question, the practical computation of the metric slope w.r.t. the $\W_\t$ metric.
\begin{prop}
\label{prop:Fisher_slope_strong_upper_grad}
 Consider the relative Fisher information
 \begin{equation}
 \label{eq:def_Fisher_Iq}
  \mathcal I_\pi( q )\coloneqq 
  \begin{cases}
   \int_E\t(x)\left|\frac{\nabla\sigma}{\sigma}(x)\right|^2 q (\rd x) & \mbox{if } q(\rd x) =\sigma(x)\pi(\rd x) \in \Dom(\mathcal I_\pi),\\
   +\infty & \mbox{otherwise}
  \end{cases}
 \end{equation}
with domain
\begin{equation}
\Dom(\mathcal I_\pi)
\coloneqq \left\{ q =\sigma\cdot \pi \in D(\mathcal H_\pi) \qtext{s.t.} \sigma\in W^{1,1}_{\mathrm{\mathrm{loc}}}(E),\quad
\frac{\nabla\sigma}{\sigma}\in L^2(\t q )\right\}.
\end{equation}
Then the local slope
\begin{equation}
|\partial\H_\pi|_{\W_\t}( q )=\sqrt{\mathcal I_\pi( q )}
\label{eq:computation_slope=fisher}
\end{equation}
is narrowly lower semi-continuous and provides a strong upper gradient for $\mathcal H_\pi$.
\end{prop}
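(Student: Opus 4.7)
The plan is to split the proof into two largely independent parts. First, since Proposition~\ref{prop:geodesic_convexity_alpha} has already established that $\H_\pi$ is $\lambda$-geodesically convex on $(\P(\bar E),\W_\t)$, the abstract machinery recalled in the bullets preceding Section~\ref{subsec:convexity_slope} applies verbatim and yields at once that $|\partial\H_\pi|_{\W_\t}$ admits the global-slope representation, is lower semi-continuous with respect to $\W_\t$ (hence also narrowly, since $\W_\t$ metrizes narrow convergence on $\P(\bar E)$), and is a strong upper gradient for $\H_\pi$. The remaining content of the statement is thus only the pointwise identification $|\partial \H_\pi|_{\W_\t}(q)=\sqrt{\mathcal I_\pi(q)}$.

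For this identification I would re-use the one-dimensional isometry $i:(\bar E,\dS)\to(\bar\D,\d_\R)$ already exploited in Proposition~\ref{prop:geodesic_convexity_alpha}, which induces an isometry between the Wasserstein spaces $(\P(\bar E),\W_\t)$ and $(\P(\bar\D),\W_\R)$, and therefore preserves the metric slope as well. A direct change of variables $y=i(x)$, $\frac{d}{dy}=\sqrt{\t(x)}\frac{d}{dx}$, further shows that the Fisher information is itself preserved,
\begin{equation*}
\mathcal I_\pi(q)=\int_E\t(x)\left|\frac{\sigma'(x)}{\sigma(x)}\right|^2 q(\rd x)
=\int_{\bar\D}\left|\frac{\bar\sigma'(y)}{\bar\sigma(y)}\right|^2\bar q(\rd y)
\eqqcolon \bar{\mathcal I}_{\bar\pi}(\bar q),
\end{equation*}
with $\bar q=i_\#q$, $\bar\sigma=\sigma\circ i^{-1}$ and $\bar\pi=i_\#\pi=\frac{1}{\mathcal Z}e^{-\bar W}\rd y$. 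The problem thereby reduces to the classical flat analogue $|\partial\bar\H_{\bar\pi}|_{\W_\R}(\bar q)=\sqrt{\bar{\mathcal I}_{\bar\pi}(\bar q)}$ on the bounded interval $\bar\D\subset\R$, of the type addressed by \cite[Theorem 10.4.17]{AGS}.

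On the flat side I would prove the two matching bounds along standard lines. The upper bound comes from testing the global slope representation against McCann's displacement interpolation between $\bar q=\bar\sigma\bar\pi$ and an arbitrary $\bar r\in\Dom(\bar\H_{\bar\pi})$: combining the Brenier-McCann first-variation formula for the Boltzmann entropy with a Cauchy-Schwarz in $L^2(\bar q)$ produces
\begin{equation*}
\bar\H_{\bar\pi}(\bar q)-\bar\H_{\bar\pi}(\bar r)\leq \left\|\frac{\bar\sigma'}{\bar\sigma}\right\|_{L^2(\bar q)}\W_\R(\bar q,\bar r)-\tfrac{\lambda}{2}\W_\R^2(\bar q,\bar r),
\end{equation*}
whence $|\partial\bar\H_{\bar\pi}|(\bar q)\leq\sqrt{\bar{\mathcal I}_{\bar\pi}(\bar q)}$ upon letting $\bar r\to\bar q$. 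The reverse bound is obtained by constructing an explicit probing curve, namely the short-time solution of the continuity equation on $(\bar\D,\d_\R)$ with velocity $\bar v_t=-\bar\sigma_t'/\bar\sigma_t$; this curve satisfies $|\bar q'|_{\W_\R}(0)\leq \sqrt{\bar{\mathcal I}_{\bar\pi}(\bar q)}$ by Theorem~\ref{theo:AC2_curves}, while a standard chain-rule computation yields $\frac{d}{dt}\bar\H_{\bar\pi}(\bar q_t)|_{t=0}=-\bar{\mathcal I}_{\bar\pi}(\bar q)$, forcing $|\partial\bar\H_{\bar\pi}|(\bar q)\geq\sqrt{\bar{\mathcal I}_{\bar\pi}(\bar q)}$.

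The anticipated main obstacle is the boundary singularity of $\bar\pi$: the transformed potential $\bar W$ blows up to $+\infty$ at $\partial\bar\D$, which places us slightly outside the textbook AGS framework (typically set in $\R^d$ with finite and coercive potentials). Happily, this very blow-up acts as an effective repulsive barrier, forcing any $\bar q\in\Dom(\bar{\mathcal I}_{\bar\pi})$ to vanish fast enough at the endpoints for $\bar\sigma'/\bar\sigma$ to be $L^2(\bar q)$-integrable --- precisely the integrability needed to run the probing argument. For a general $\bar q\in\Dom(\bar{\mathcal I}_{\bar\pi})$ with only $W^{1,1}_{\mathrm{loc}}$ regularity of $\bar\sigma$, I would extend the identity by density, truncating and mollifying $\bar\sigma$ away from $\partial\bar\D$ and passing to the limit via the lower semi-continuity of $\bar{\mathcal I}_{\bar\pi}$, itself obtained from a Fenchel dual variational formula for the Fisher information.
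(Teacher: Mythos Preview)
Your overall strategy matches the paper's exactly: reduce via the one-dimensional isometry $i:(\bar E,\dS)\to(\bar\D,\d_\R)$ to the flat Wasserstein space, identify the slope there with the Fisher information, and then invoke the $\lambda$-convexity from Proposition~\ref{prop:geodesic_convexity_alpha} to upgrade the local slope to the global one (hence l.s.c.\ and strong upper gradient). The only real difference is how the flat identification is obtained. The paper does not re-derive it: after observing $\bar\sigma(y)=\sigma(i^{-1}(y))$ and $\nabla_y\bar\sigma=\sqrt{\t}\,\nabla_x\sigma\circ i^{-1}$, it simply cites \cite[Theorem~10.4.9]{AGS} on $(\P(\bar\D),\W_\R)$ and reads off $|\partial\bar\H_{\bar\pi}|^2_{\W_\R}(\bar q)=\int_{\D}|\bar\sigma'/\bar\sigma|^2\bar q$, then changes variables back. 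Your proposed route instead re-proves this by hand (McCann interpolation for the upper bound, a probing curve for the lower bound, plus a density/truncation layer to handle the blow-up of $\bar W$ at $\partial\bar\D$). That is correct and arguably more self-contained, and your concern about the boundary is fair since $\bar W\to+\infty$; but it is considerably more work than the paper's direct appeal to the AGS result, which it treats as applying verbatim on the compact interval. If you are happy citing AGS, you can drop the entire upper/lower-bound and density discussion.
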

\begin{proof}
 As before we argue via isometry, and for any $ q \in \P_\t(\bar E)$ we write $\bar q =i_\# q \in \P_\R(\bar\D)$.
 From \eqref{eq:def_metric_slope_abstract} clearly isometry preserves the metric slope
 $$
 |\partial\H_\pi|_{\W_\t}( q )=|\partial\bar\H_{\bar\pi}|_{\W_\R}(\bar q )
 $$
 and it suffices to compute $|\partial\bar\H_{\bar\pi}|_{\W_\R}(\bar q )$.
 In the standard, Euclidean $(\P(\bar\D),\W_\R)$ framework we can rigorously apply \cite[Theorem 10.4.9]{AGS} to conclude that
 $$
 |\partial\bar\H_{\bar\pi}|_{\W_\R}^2(\bar q )=\int_{\D}\left|\frac{\nabla_y \bar\sigma}{\bar\sigma}(y)\right|^2\,\bar q (\rd y),
 \hspace{1cm}\bar q(\rd y)=\bar\sigma(y)\cdot\bar\pi(\rd y)
 $$
 with corresponding domain.
 Here of course we implicitly mean that $\bar q \ll\bar\pi$ with density $\bar\sigma(y)=\frac{\rd\bar q }{\rd\bar\pi}(y)$ belonging to $W^{1,1}_{\mathrm{loc}}(\D)$ and making the right-hand side integral finite.
 Since $\bar q =i_{\#} q $ and $\bar\pi=i_{\#}\pi$ the corresponding densities are again related through 
 $$
 \bar\sigma(y)=\sigma(i^{-1}(y)).
 $$
 In particular, since $i$ and $i^{-1}$ are locally smooth, $\bar\sigma\in W^{1,1}_{\mathrm{loc}}(\D)$ if and only if $\sigma\in W^{1,1}_{\mathrm{loc}}(E)$.
 Moreover, owing to $i(x)=\dS(0,x)=\int_0^x\frac{1}{\sqrt{ \t(z)}}\rd z$ hence $\frac{di}{dx}=\frac{1}{\sqrt{\t(x)}}$, we get
 $$
 \nabla_y\bar\sigma(y)=\frac{di^{-1}}{dy}(y)\nabla_x\sigma(i^{-1}(y))
 =\frac{1}{\frac{di}{dx}(i^{-1}(y))}\nabla_x\sigma(i^{-1}(y))
 =\sqrt{\t(i^{-1}(y))}\nabla_x\sigma(i^{-1}(y)).
 $$
 By definition of the pushforward $\bar q =i_\# q $ this gives
 $$
 |\partial\H_\pi|_{\W_\t}^2( q )
 =|\partial\bar \H_{\bar\pi}|_{\W_\R}^2(\bar q )
 =\int_{\D}\left|\frac{\sqrt{\t}\nabla_x \sigma}{\sigma}(i^{-1}(y))\right|^2\,\bar q (\rd y)
 =\int_{E}\t(x)\left|\frac{\nabla_x \sigma}{\sigma}(x)\right|^2\,q (\rd x)
 $$
 as in our statement.
 
 In order to conclude the proof, we finally recall from Proposition~\ref{prop:geodesic_convexity_alpha} that $\H_\pi$ is $\lambda$-geodesically convex:
\cite[Corollary 2.4.10]{AGS} guarantees that the local slope $|\partial\H_\pi|_{\W_\t}$ coincides with the \emph{global slope}, and as such it is automatically lower-semicontinuous and a \emph{strong}-upper gradient.
\end{proof}

%
%
\section{The conditioned gradient flow}
\label{sec:relate_PDE_metric}
The goal of this section is to identify the (smooth) solutions $q_t(x)$ of the conditioned Fokker-Planck equation \eqref{eq:Fokker-Planck_q_PDE}, constructed in Proposition~\ref{prop:properties_sol_q_PDE} by purely PDE techniques, as $\EVI$ solutions of the abstract gradient flow
$$
``\frac{d}{dt}q_t=-\grad_{\W_\t}\H_\pi(q_t)".
$$

Let us first recall that the potential $V$ from \eqref{eq:def_V_W} was defined so that
$$
\pi(\rd x)=\frac{1}{\mathcal Z}e^{-V(x)}\rd x
\qqtext{and}
\partial_t q_t = \dive\left(q_t\t\nabla\log\left(\frac{q_t}{\pi}\right)\right).
$$
Writing as before
$$
\sigma_t(x)\coloneqq \frac {q_t(x)}{\pi(x)}=\frac{d q_t}{d\pi}(x)
$$
for the density w.r.t. our stationary measure, we can recast the PDE as the continuity equation
$$
\partial_t q_t+\dive(q_t v_t)=0
\qqtext{with}
v_t(x)= -\t(x)\nabla\log\sigma_t(x).
$$
This resembles both the Fisher information \eqref{eq:def_Fisher_Iq} and the metric speed from Theorem~\ref{theo:AC2_curves}.
Indeed we have:
\begin{prop}
 \label{prop:PDE_q_slope_speed}
 Let $q_t(x)$ be the smooth solution from Proposition~\ref{prop:properties_sol_q_PDE} and $\sigma_t(x)=\frac{q_t(x)}{\pi(x)}$.
 Then for all $T>t_0>0$ there exists $C=C(t_0,T)>0$ such that
 \begin{equation}
 \label{eq:bound_nabla_log_sigma}
 |\nabla\log\sigma_t(x)|=\left|\frac{\nabla\sigma_t(x)}{\sigma_t(x)}\right|\leq \frac{C}{\pi(x)},
 \hspace{1cm}\forall x\in E
 \end{equation}
 and
 \begin{equation}
 \label{eq:speed_slope_q}
 |q'|^2_{\W_\t}(t)
 \leq \|v_t\|^2_{L^2_\t(q_t)}=\int_E \frac{|v_t|^2}{\t} q_t
= \int_E \t\left|\frac{\nabla \sigma_t}{\sigma_t}\right|^2 q_t = |\partial \H_\pi|^2(q_t)\leq  C  
 \end{equation}
 for all $t\in [t_0,T]$.
\end{prop}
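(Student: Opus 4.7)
The plan is to combine three ingredients: the regularity and two-sided bounds for $q_t$ from Proposition~\ref{prop:properties_sol_q_PDE}, the characterization of metric speeds for continuity equations from Theorem~\ref{theo:AC2_curves}, and the identification of the metric slope with the Fisher information from Proposition~\ref{prop:Fisher_slope_strong_upper_grad}. Only the pointwise bound \eqref{eq:bound_nabla_log_sigma} requires genuine work; the remaining (in)equalities are essentially a matter of assembling previously established statements.

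First I would establish \eqref{eq:bound_nabla_log_sigma}. Since $q_t, \pi \in C^\infty(\bar E)$ both vanish linearly on $\partial E$, one can factor locally near each endpoint $q_t(x) = \rho(x)\, a(t,x)$ and $\pi(x) = \rho(x)\, b(x)$, where $\rho(x)$ is the distance to the nearest boundary point and $a,b$ are smooth up to $\partial E$. The lower bound $q_t \geq c_0\pi$ from Proposition~\ref{prop:properties_sol_q_PDE} forces $a(t,\cdot) \geq c_0 b \geq c > 0$ uniformly for $t \in [t_0,T]$, so that $\sigma_t = a/b$ is smooth and bounded away from zero up to $\partial E$. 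Consequently $\nabla \log \sigma_t$ is uniformly bounded on $[t_0,T]\times\bar E$ with constant depending only on $t_0,T$ via the $C^\infty$-estimates of Proposition~\ref{prop:properties_sol_q_PDE} --- a bound strictly stronger than the claimed one, since $1/\pi(x)$ blows up at the boundary.

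Next I would rewrite the PDE \eqref{eq:Fokker-Planck_q_PDE} as the continuity equation $\partial_t q_t + \dive(q_t v_t) = 0$ with velocity $v_t = -\t\nabla\log\sigma_t$, and check that this solves \eqref{eq:CE} in the sense of Definition~\ref{def:weak_sols_CE}, the no-flux condition being automatic because $q_t v_t \sim \d(x,\partial E)^2 \to 0$ at the boundary (cf. Remark~\ref{rmk:Dir_Neum_BCs}). The algebraic identity $\|v_t\|^2_{L^2_\t(q_t)} = \int_E \t\,|\nabla\sigma_t/\sigma_t|^2\, q_t$ then follows by direct substitution. Combining \eqref{eq:bound_nabla_log_sigma} with the upper bound $q_t \leq C_0\pi$ dominates the integrand by $C\,\t/\pi$, and $\t/\pi$ is bounded up to $\partial E$ since both $\t$ and $\pi$ vanish linearly there; integrating yields the final uniform estimate $|\partial\H_\pi|^2(q_t) \leq C$ on $[t_0,T]$, and in particular shows that $(q_t,v_t)$ has finite $L^2_\t$ kinetic energy.

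The remaining inequalities are then immediate: the first inequality $|q'|^2_{\W_\t}(t) \leq \|v_t\|^2_{L^2_\t(q_t)}$ is exactly \eqref{eq:mu'_leq_v} applied to $(q_t,v_t)$, and the identification with $|\partial\H_\pi|^2(q_t)$ is Proposition~\ref{prop:Fisher_slope_strong_upper_grad}; for this last step one must verify $q_t \in \Dom(\mathcal I_\pi)$, but $\sigma_t \in W^{1,1}_{\mathrm{loc}}(E)$ is clear from smoothness and $\nabla\sigma_t/\sigma_t \in L^2(\t q_t)$ is precisely what has just been established. The main obstacle is really the delicate boundary behaviour, that is the cancellation of the two coinciding logarithmic singularities of $\nabla\log q_t$ and $\nabla\log\pi$: without the linear vanishing rate shared by $q_t$ and $\pi$ (ultimately inherited from the linear vanishing of $\t$ and $\eta$), both $\nabla\log\sigma_t$ and the Fisher integrand would blow up at $\partial E$ and the whole chain of estimates would collapse.
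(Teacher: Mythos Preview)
Your proof is correct and follows essentially the same architecture as the paper's: establish the pointwise bound on $\nabla\log\sigma_t$, verify the continuity equation with no-flux boundary condition, then invoke Theorem~\ref{theo:AC2_curves} and Proposition~\ref{prop:Fisher_slope_strong_upper_grad}. The one notable difference is in the first step: the paper applies the quotient rule $\nabla\sigma_t=(\pi\nabla q_t-q_t\nabla\pi)/\pi^2$ directly and, using only $|\nabla q_t|+|\nabla\pi|\leq C$ together with $q_t\asymp\pi$, obtains the cruder estimate $|\nabla\sigma_t|\leq C/\pi$; your Hadamard-type factorization $q_t=\rho\,a$, $\pi=\rho\,b$ with $a,b$ smooth and bounded away from zero shows that $\sigma_t=a/b$ is in fact smooth up to $\partial E$, hence $\nabla\log\sigma_t$ is uniformly bounded on $[t_0,T]\times\bar E$ --- a strictly sharper conclusion than \eqref{eq:bound_nabla_log_sigma}. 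This buys you a slightly cleaner estimate on the Fisher integrand (it is simply $\leq C\theta q_t$, already integrable), whereas the paper must further argue that $\theta/\pi$ stays bounded at the boundary; both routes are short and the gain is marginal.
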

\begin{proof}
Fix any $0<t_0<T$.
We recall from Proposition~\ref{prop:properties_sol_q_PDE} that $q_t(x),\pi(x)>0$ are smooth up to the boundary, vanish linearly, and that $0<c_0\leq \frac{q_t(x)}{\pi(x)}=\sigma_t(x)\leq C_0$ for $t\geq t_0$.
In particular $\sigma\in C([t_0,+\infty)\times\bar E)\cap C^\infty([t_0,\infty)\times E)$, and the chain rule $\nabla \log\sigma_t=\frac{\nabla\sigma_t}{\sigma_t}$ is completely rigorous at least in the interior.
Recall moreover that $\pi\in C^\infty(\bar E)$ and that $q_t\in C^\infty(\R^+\times\bar E)$.
As a consequence $|\nabla q_t(x)|+|\nabla \pi(x)|\leq C$ for any $(t,x)\in [t_0,T]\times\bar E$.
Writing $\nabla\sigma_t=\nabla\left(\frac{q_t}{\pi}\right)=\frac{\pi\nabla q_t-q_t\nabla\pi}{\pi^2}$ and recalling that $q_t\asymp\pi$ are comparable up to the boundary, we see that $|\nabla\sigma_t(X)|\leq \frac{C}{\pi(x)}$ and our bound \eqref{eq:bound_nabla_log_sigma} for $\nabla\log\sigma_t=\frac{\nabla\sigma_t}{\sigma_t}$ follows from the lower bound $\sigma_t(x)\geq c_0>0$.

Let us now focus on \eqref{eq:speed_slope_q} and relate $\nabla\log \sigma$ to the metric speed as in Proposition~\ref{prop:Fisher_slope_strong_upper_grad}.
Since $\sigma_t(x)$ is smooth and positive in the interior we have of course $\frac{\nabla\sigma_t}{\sigma_t}\in W^{1,1}_{\mathrm{loc}}(E)$.
In order to check the upper bound in \eqref{eq:speed_slope_q} we simply use the previous estimate on $\nabla \log\sigma$ to get
$$
|\partial \H_\pi|^2(q_t)
=\int_E \t\left|\frac{\nabla \sigma_t}{\sigma_t}\right|^2 q_t 
=
\int_E \t\left|\nabla \log\sigma_t\right|^2 q_t 
\leq \int_E\t\frac{C}{\pi^2} q_t \leq C,
$$
where the last inequality follows again from \eqref{eq:bounds_qt} ($q_t,\pi,\t$ being all comparable one to another).
This also entails
$$
\|v_t\|^2_{L^2_\t(q_t)}=\int_E \frac{|v_t|^2}{\t} q_t
= \int_E \t\left|\frac{\nabla \sigma_t}{\sigma_t}\right|^2 q_t \leq C
$$
locally uniformly in time $t\in[t_0,T]$.
Our estimate \eqref{eq:bound_nabla_log_sigma} also controls
\begin{equation}
\label{eq:control_flux=0_boundary}
|q_t(x) v_t(x)|=\left|q_t(x) \t(x) \nabla\log\sigma_t(x)\right|=\mathcal O\left(\pi^2(x)\frac{1}{\pi(x)}\right)\xrightarrow[x\to \partial E]{}0
\end{equation}
locally uniformly in time.
As a consequence the flux $q_t v_t\cdot\nu|_{\partial E}=0$ vanishes in the strong (continuous) sense, and \textit{a fortiori} $(q,v)$ solve the continuity equation in the sense of Definition~\ref{def:weak_sols_CE}.
Our characterization of absolutely continuous curves from Theorem~\ref{theo:AC2_curves} finally guarantees that $q\in AC^2([t_0,T];\P_{\W_\t})$ with $|q'|^2_{\W_t}(t)\leq \|v_t\|^2_{L^2_\t(q_t)}\leq C$ and the proof is complete.
\end{proof}
\begin{cor}
\label{cor:EDI_t0_q}
 The solution $q$ from Proposition~\ref{prop:properties_sol_q_PDE} is an $\EDI_{t_0}$ solution for any $t_0>0$.
\end{cor}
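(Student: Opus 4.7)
The plan is to establish the \emph{entropy dissipation equality}
$$\frac{d}{dt}\H_\pi(q_t)=-|\partial\H_\pi|^2(q_t),\qquad t>0,$$
and then combine it with the metric-speed bound from Proposition~\ref{prop:PDE_q_slope_speed} to upgrade it into the $\EDI_{t_0}$ inequality. As a preliminary, I would note that the two-sided bound $c_0\pi\leq q_t\leq C_0\pi$ from Proposition~\ref{prop:properties_sol_q_PDE} makes $\sigma_t=q_t/\pi$ (and hence $\log\sigma_t$) uniformly bounded on $[t_0,T]\times\bar E$ for any $0<t_0<T$; in particular $q_{t_0}\in\Dom(\H_\pi)$, as required to even make sense of $\EDI_{t_0}$.

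Next, starting from $\H_\pi(q_t)=\int_E q_t\log\sigma_t\,dx$ and exploiting the $C^\infty$ regularity to differentiate under the integral sign, the mass-conservation identity $\int_E\partial_t q_t\,dx=0$ reduces the time derivative to $\int_E\partial_t q_t\,\log\sigma_t\,dx$. Substituting the PDE $\partial_t q_t=\dive(q_t\t\nabla\log\sigma_t)$ and integrating by parts yields
$$\frac{d}{dt}\H_\pi(q_t)=-\int_E\t q_t|\nabla\log\sigma_t|^2\,dx+\bigl[q_t v_t\cdot\nu\,\log\sigma_t\bigr]_{\partial E}.$$
The boundary contribution vanishes thanks to \eqref{eq:control_flux=0_boundary} (which already records $|q_t v_t|\to 0$ uniformly at $\partial E$) together with the boundedness of $\log\sigma_t$. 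By Proposition~\ref{prop:Fisher_slope_strong_upper_grad} the remaining volume term equals $-|\partial\H_\pi|^2(q_t)$ exactly.

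Integrating in time from $t_0$ to $t$ then delivers the equality $\H_\pi(q_{t_0})-\H_\pi(q_t)=\int_{t_0}^t|\partial\H_\pi|^2(q_s)\,ds$. Combining this with the speed-slope inequality $|q'|^2_{\W_\t}(s)\leq|\partial\H_\pi|^2(q_s)$ from \eqref{eq:speed_slope_q} gives
$$\int_{t_0}^t\tfrac{1}{2}\bigl(|q'|^2_{\W_\t}(s)+|\partial\H_\pi|^2(q_s)\bigr)\,ds\leq\int_{t_0}^t|\partial\H_\pi|^2(q_s)\,ds=\H_\pi(q_{t_0})-\H_\pi(q_t),$$
which is precisely $\EDI_{t_0}$. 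The local $AC^2$ regularity of $q$ required by the definition has already been secured in Proposition~\ref{prop:PDE_q_slope_speed}.

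The only genuine subtlety lies in justifying the vanishing of the boundary flux in the integration by parts: $\nabla\log\sigma_t$ is allowed to blow up like $1/\pi$ near $\partial E$, but the prefactor $q_t\t$ vanishes quadratically there (both $q_t\asymp\pi$ and $\t$ vanishing linearly on $\partial E$), and this is exactly the content of \eqref{eq:control_flux=0_boundary}. Everything else is routine once the smoothness and two-sided bounds from Proposition~\ref{prop:properties_sol_q_PDE} are in hand.
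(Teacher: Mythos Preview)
Your proof is correct and follows essentially the same approach as the paper's: both compute $\frac{d}{dt}\H_\pi(q_t)=-\int_E\t|\nabla\log\sigma_t|^2 q_t$ via the PDE and an integration by parts justified through \eqref{eq:control_flux=0_boundary}, then combine with the speed--slope bound from Proposition~\ref{prop:PDE_q_slope_speed}. The only cosmetic difference is that the paper splits the dissipation as $-\tfrac12\|v_t\|^2_{L^2_\t(q_t)}-\tfrac12|\partial\H_\pi|^2(q_t)$ directly at the differential level, whereas you first integrate to an equality and then invoke $|q'|^2_{\W_\t}\leq|\partial\H_\pi|^2$; the content is identical.
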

\begin{proof}
We will actually establish a stronger differential version of \eqref{eq:EDI0}.
For this we simply use the PDE to compute, for $t\geq t_0>0$,
\begin{multline*}
 \frac{d}{dt}\H_\pi(q_t)
 =\frac{d}{dt}\int_E\sigma_t\log\sigma_t\,\pi
 =\int_E(1+\log\sigma_t)\partial_t\sigma_t\,\pi
 =\int_E(1+\log\sigma_t)\partial_tq_t
 \\
 =\int_E(1+\log\sigma_t)\dive\left(\t q_t\nabla \log\sigma_t\right)
 = - \int_E\t |\nabla \log\sigma_t|^2\,q_t.
\end{multline*}
The first differentiation w.r.t. time under the integral sign is completely rigorous since $q,\sigma$ are smooth for $t>0$, and $0<c\leq \sigma_t(x)\leq C$ from \eqref{eq:bounds_qt}.
Our last integration by parts in space is also legitimate, since as already observed from \eqref{eq:control_flux=0_boundary} $q_t\t\nabla\log\sigma_t=0$ in the boundary term
$$
\int_{\partial E}(1+\log\sigma_t) \t q_t\nabla\log\sigma_t\cdot\nu =0
$$
($\log\sigma$ being bounded uniformly in $[t_0,T]\times\bar E$).
Finally leveraging \eqref{eq:speed_slope_q} from the previous Proposition, this yields
\begin{multline*}
\frac{d}{dt}\H_\pi(q_t) = -\frac{1}{2}\int_E\t |\nabla \log\sigma_t|^2q_t -\frac{1}{2}\int_E\t |\nabla \log\sigma_t|^2q_t\\
= -\frac{1}{2}\underbrace{\int_E\frac{1}{\t} |v_t|^2 q_t}_{\geq |q'|^2_{\W_\t}(t)} -\frac{1}{2}\underbrace{\int_E\t \left|\frac{\nabla \sigma_t}{\sigma_t}\right|^2q_t}_{=|\partial \H_\pi|^2(q_t)}
\leq -\frac{1}{2}|q'|^2_{\W_\t}(t) -\frac{1}{2}|\partial \H_\pi|^2(q_t).
\end{multline*}
and \eqref{eq:EDI0} immediately follows.
\end{proof}
We are now in position of proving our main result.
\begin{theo}
\label{theo:q_EVI}
Let $\lambda\in\R$ be as in \eqref{eq:def_lambda_mod_convexity}.
Then
\begin{enumerate}
 \item 
 \label{item:Hpi_generates_EVI}
 $\H_\pi$ generates an $\EVI_\lambda$-gradient flow $(\mathsf S_t)_{t\geq 0}$ on $\P_\t(\bar E)=\overline{\Dom (\H_\pi)}$
 \item
 \label{item:q=EVI}
 For any $q_0\in \P(\bar E)$ emanating from some $p_0\in \P(\bar E)$ as in \eqref{eq:def_q_0_from_p0}, the solution $q_t(x)$ from Proposition~\ref{prop:properties_sol_q_PDE} coincides with the $\EVI$ solution, i-e
 $$
 q_t=\mathsf S_t q_0,
 \hspace{1cm}t\geq 0.
 $$
 \item
 For any $q_0^1,q_0^2\in \P(\bar E)$ we have contraction in the Wasserstein-Shahshahani distance,
 $$
 \W_\t(q^1_t,q^2_t)\leq e^{-\lambda t}\W_\t(q^1_0,q^2_0),
 \hspace{1cm}t\geq 0.
 $$
 \item
 Long time behaviour and logarithmic Sobolev inequality
 \begin{enumerate}
 \item
 If $\lambda>0$ then
 \begin{equation}
  \label{eq:LSI}
  \int_{E}\sigma\log\sigma \,\pi= \H_{\pi}( q )
 \leq\frac{1}{2\lambda}\mathcal I_\pi( q )
 = \frac{1}{2\lambda}\int_E\t\left|\frac{\nabla\sigma}{\sigma}\right|^2  q ,
 \tag{$LSI_\lambda$}
 \end{equation}
for all $ q =\sigma\cdot \pi\in D(\mathcal H_\pi)$, with moreover
\begin{equation}
\label{eq:long_time_lambda>0}
\W_\t(q_t,\pi)\leq e^{-\lambda t}\W_\t(q_0,\pi)
\qqtext{and}
\|q_t-\pi\|_{\mathrm{TV}}\leq e^{-\lambda t}\sqrt{2\H_\pi(q_0)}
\end{equation}
for all $t\geq 0$.
In particular $q_t$ converges exponentially fast both in the Wasserstein distance and in total variation to the stationary distribution (provided $\H_\pi(q_0)>0$, but otherwise the argument can be applied starting from $\H_\pi(q_{t_0})<\infty$ for small $t_0>0$).
 \item
 If $\lambda=0$ then
 $$
 \W_{\theta}(q_t,\pi)\xrightarrow[t\to\infty]{} 0.
 $$
 \end{enumerate}
\end{enumerate}
\end{theo}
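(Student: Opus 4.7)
The plan is to assemble the abstract metric gradient flow machinery from Section~\ref{sec:metric} using the three ingredients already established: $\lambda$-geodesic convexity of $\H_\pi$ (Proposition~\ref{prop:geodesic_convexity_alpha}), the Fisher--slope identity (Proposition~\ref{prop:Fisher_slope_strong_upper_grad}), and the $\EDI_{t_0}$ property of PDE solutions (Corollary~\ref{cor:EDI_t0_q}).

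For item~(\ref{item:Hpi_generates_EVI}), I would argue via the isometry $I:\P_\t(\bar E)\to\P_\R(\bar\D)$ introduced in Proposition~\ref{prop:geodesic_convexity_alpha}, which reduces the question to generating an $\EVI_\lambda$-flow for the pushed-forward entropy $\bar\H_{\bar\pi}$ on the Wasserstein space over a bounded Euclidean interval. In that clean Euclidean setting, the $\lambda$-convexity inherited from Proposition~\ref{prop:geodesic_convexity_alpha} together with density of $\Dom(\bar\H_{\bar\pi})$ puts us within the direct scope of the classical existence theorem of \cite{AGS} (the minimizing-movement scheme produces the $\EVI_\lambda$-flow). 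Pulling this flow back through $I^{-1}$ yields $(\mathsf S_t)_{t\geq 0}$ on $\P_\t(\bar E)$. Item~(\ref{item:q=EVI}) is then a one-line application of Theorem~\ref{theo:EDIt0_EVI}: Corollary~\ref{cor:EDI_t0_q} supplies $\EDI_{t_0}$ for every $t_0>0$, the curve $q_t$ stays in $\P_\t(\bar E)=\overline{\Dom(\H_\pi)}$ trivially, and narrow continuity of $t\mapsto q_t$ at $t=0^+$ from Proposition~\ref{prop:properties_sol_q_PDE} is exactly $\W_\t$-continuity, since $\W_\t$ metrizes narrow convergence on $\P(\bar E)$.

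Item~(3) is the contraction estimate in Theorem~\ref{theo:prop_EVI_sols} applied to our $\EVI_\lambda$-flow. For item~(4a) with $\lambda>0$, the functional inequality $\frac{\lambda}{2}\dist^2(q,\pi)\leq \H_\pi(q)-\H_\pi(\pi)\leq \frac{1}{2\lambda}|\partial\H_\pi|^2(q)$ from Theorem~\ref{theo:prop_EVI_sols} combined with $\H_\pi(\pi)=0$ and the slope identity $|\partial\H_\pi|_{\W_\t}^2=\mathcal I_\pi$ from Proposition~\ref{prop:Fisher_slope_strong_upper_grad} immediately yields \eqref{eq:LSI}. The exponential Wasserstein decay is again Theorem~\ref{theo:prop_EVI_sols}. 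For the total variation bound, one feeds \eqref{eq:LSI} into the entropy dissipation identity derived in the proof of Corollary~\ref{cor:EDI_t0_q}, namely $\frac{d}{dt}\H_\pi(q_t)=-\mathcal I_\pi(q_t)\leq -2\lambda\H_\pi(q_t)$, to get $\H_\pi(q_t)\leq e^{-2\lambda t}\H_\pi(q_0)$, and then applies Pinsker's inequality $\|q_t-\pi\|_{\mathrm{TV}}^2\leq 2\H_\pi(q_t)$. If $\H_\pi(q_0)=+\infty$ the argument is started from some $t_0>0$, at which point \eqref{eq:bounds_qt} guarantees $\H_\pi(q_{t_0})<+\infty$.

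The main obstacle lies in item~(4b), the borderline case $\lambda=0$, where contraction to $\pi$ is no longer strict and the abstract theorem offers nothing more than monotonicity of the entropy. Here I would proceed by a LaSalle-type compactness argument: narrow precompactness of $\P_\t(\bar E)$ (which follows from compactness of $\bar E$) ensures that along any sequence $t_n\to\infty$ one may extract a narrow limit $q_{t_n}\rightharpoonup q_\infty\in\P_\t(\bar E)$; monotonicity of $t\mapsto \H_\pi(q_t)$, lower semicontinuity of $\H_\pi$, and continuity of the flow $\mathsf S_s$ force $\H_\pi(\mathsf S_s q_\infty)=\H_\pi(q_\infty)$ for all $s\geq 0$; feeding this into $\EDE$ and the slope identity yields $\mathcal I_\pi(q_\infty)=0$, hence $q_\infty=\pi$. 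Uniqueness of the limit then upgrades subsequential convergence to full $\W_\t$-convergence.
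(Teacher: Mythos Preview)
Your proof is essentially correct and aligns closely with the paper's for items (1)--(3) and (4a). For item~(1) the paper invokes the $\mathsf{CAT}(0)$ structure of $(\P_\t(\bar E),\W_\t)$, which holds in dimension one, to apply \cite[Theorem~3.14]{muratori2020gradient} directly, whereas your isometry-then-pull-back argument is an equally valid alternative that simply recycles the machinery already built in Proposition~\ref{prop:geodesic_convexity_alpha}. Items~(2), (3), and (4a) match the paper essentially line by line.

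The gap is in item~(4b). Your claim that ``monotonicity, lower semicontinuity, and continuity of the flow force $\H_\pi(\mathsf S_s q_\infty)=\H_\pi(q_\infty)$'' is not justified as stated: lower semicontinuity only gives $\H_\pi(q_\infty)\leq L$ and $\H_\pi(\mathsf S_s q_\infty)\leq L$ where $L=\lim_t\H_\pi(q_t)$, and monotonicity along the flow gives $\H_\pi(\mathsf S_s q_\infty)\leq\H_\pi(q_\infty)$, but nothing forces equality for an \emph{arbitrary} cluster point $q_\infty$. A clean fix is to bypass this step entirely: from the $\EDE$ along the original curve one has $\int_{t_0}^\infty\mathcal I_\pi(q_t)\,\rd t<\infty$, so $\mathcal I_\pi(q_{t_n})\to 0$ along some sequence $t_n\to\infty$; extracting a narrow limit and using the lower semicontinuity of the slope (Proposition~\ref{prop:Fisher_slope_strong_upper_grad}) yields $\mathcal I_\pi(q_\infty)=0$, hence $q_\infty=\pi$, and non-expansion ($\lambda=0$) upgrades subsequential to full convergence. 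Alternatively, your LaSalle argument can be repaired by first passing to a point of $\omega(q_0)$ where $\H_\pi$ attains its minimum over the compact invariant $\omega$-limit set, and deducing constancy of the entropy along the flow \emph{from that point}. The paper itself sidesteps all of this and simply cites \cite[Corollary~4.0.6]{AGS}.
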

\noindent
In our opinion the main point here is really \ref{item:q=EVI}:
Although the unconditioned process is \emph{not} variational (as discussed in Section~\ref{sec:Kimura_not_GF}), the $Q$-process  automatically inherits after conditioning a (rigorous) variational structure, once the right geometry is used (that of $\W_\t$).
This significantly strengthens previous results from \cite{chalub2021gradient}, where the Wasserstein-Shahshahani gradient flow structure was identified only formally for the original Kimura PDE \eqref{eq:FP_with_BC} satisfied by $p$ and without any real probabilistic interpretation of the change of variables from $p$ to $q$.

It is not clear what the biological interpretation might be for initial data $q_0$ in \ref{item:Hpi_generates_EVI} at least partially supported on the boundary.
Indeed, in our probabilistic construction \eqref{eq:def_q_0_from_p0} prevented $q_0$ from having atoms on the boundary (and rightly so, since $q_0$ precisely represents $p_0$ conditioned to non absorption).
We still chose to include this possibility for the sake of generality.

 As regards \eqref{eq:LSI}, we point out that very similar weighted logarithmic Sobolev inequalities were obtained independently in \cite{furioli2019wright} from a purely PDE and less geometric point of view (their proof is actually a hands-on change of variables, corresponding to our isometry argument in Section~\ref{subsec:convexity_slope}).

\begin{proof}
~
\begin{enumerate}
 \item 
 We already proved in Proposition~\ref{prop:geodesic_convexity_alpha} that $\H_\pi$ is $\lambda$-geodesically convex.
 Since in dimension $d=1$ there is no Ricci curvature, it is well-known that $(\P_\t(\bar E),\W_\t)$ is a non-positively curved $\mathsf{CAT}(0)$ space in the Alexandrov sense.
 As a consequence the generation of the $\EVI_\lambda$-flow can be found verbatim in \cite[Theorem 3.14]{muratori2020gradient}, see also \cite[Theorem 4.0.4]{AGS} (in which Assumption 4.0.1 holds along geodesics in dimension 1 only).
 \item
 We just established in Corollary~\ref{cor:EDI_t0_q} that, if $q_0$ originates from some $p_0$ in the sense of \eqref{eq:def_q_0_from_p0}, the solution $q_t(x)$ obtained by pure PDE methods is actually an $\EVI_{t_0}$-solution for any $t_0>0$.
 Since $q_t\to q_0$ narrowly (Proposition~\ref{prop:properties_sol_q_PDE}), and given that we just proved that $\H_\pi$ generates an $\EVI_\lambda$-flow on the whole $\P_\t(\bar E)$, we can simply appeal to Theorem~\ref{theo:EDIt0_EVI} and conclude that $q_t=\mathsf S_t q_0$.
\item
This is one of the classical properties of $\EVI$ solutions in Theorem~\ref{theo:prop_EVI_sols}.
\item
For $\lambda>0$ the statement (in particular the exponential convergence and the logarithmic Sobolev inequality) can be found e.g. in \cite[Theorem 3.5]{muratori2020gradient} and \cite[Lemma 2.4.13]{AGS} (Assumption 2.4.5 and Assumption 4.0.1 therein are both satisfied in our one-dimensional $\mathsf{CAT}(0)$ setting).
For the asymptotic convergence for $\lambda=0$ the result can be found in \cite[Corollary 4.0.6]{AGS}.
The convergence in Total variation comes from the celebrated Csisz\'ar-Kullback-Pinsker inequality $\|q-\pi\|_{\mathrm{TV}}\leq \sqrt{2\H_{\pi}(q)}$ as well as the decay in entropy $\H_\pi(q_t)\leq e^{-2\lambda t}\H_\pi(q_0)$.
\end{enumerate}

\end{proof}
Let us finish with a brief discussion on convergence rates.
Note first that no explicit rate of convergence can be derived for $\lambda=0$.
In the better case $\lambda>0$, one may wonder what additional information \eqref{eq:long_time_lambda>0} brings, compared to any possible spectral analysis one may try carrying out directly for $\tilde \L$.
From \eqref{eq:def_generator_Ltilde} clearly the spectrum of $\tilde \L$ is shifted by $-\lambda_0$ w.r.t. that of $\L$, and one should expect that a spectral analysis for $\tilde \L$ leads to exponential convergence $q_t\to \pi$ with rate $\lambda_1-\lambda_0>0$ (see also the proof of Theorem~\ref{theo:exist_alpha} for a hint in this direction).
However, already in the Euclidean case it is known that the modulus of displacement convexity $\lambda<\lambda_1-\lambda_0$ predicts a slower convergence rate.
In other words, our analysis should predict suboptimal rates with respect to more direct spectral considerations.
This can be confirmed in the prototypical case $U\equiv 0 $ (neutral genetic preference) with $\t(x)=x(1-x)$, where one can check that $\alpha_0(x)=1$ and $\alpha_1(x)=x-\frac 12$ are the first eigenfunctions of $\Delta(x(1-x)\,\cdot\,)$ with eigenvalues $\lambda_0=6,\lambda_1=2$ and gap $\lambda_1-\lambda_0=4$, while our best predicted rate in this case can be tracked down to $\lambda=3$ by explicitly evaluating \eqref{eq:def_lambda_mod_convexity}.
We are not aware of a general statement that $\lambda<\lambda_1-\lambda_0$ always, in particular the $\t$-degeneracy in our framework does not fit the usual irreducible setting and might induce new behaviours.
Despite this suboptimality, one possible advantage of our approach is that the long-time convergence towards the stationary distribution $q_t\to \pi$ can be explicitly quantified by accessing only the lowest spectral level $\lambda_0$ of the unconditioned operator $\L$, which may be interesting for applications in biology.

\section*{Acknowledgments}
J.-B.C. was supported by FCT - Fundação para a Ci\^encia e a Tecnologia, under the project UIDB/04561/2020
L.M. was funded by the Portuguese Science Foundation through a personal grant 2020/00162/CEECIND as well as the FCT project PTDC/MAT-STA/28812/2017, and wishes to thank F. Chalub and N. Champagnat for fruitful discussions.
\bibliographystyle{plain}
\bibliography{./biblio}

\begin{thebibliography}{10}

\bibitem{ambrosio2008transport}
Luigi Ambrosio.
\newblock Transport equation and cauchy problem for non-smooth vector fields.
\newblock In {\em Calculus of variations and nonlinear partial differential
  equations}, pages 1--41. Springer, 2008.

\bibitem{AGS}
Luigi Ambrosio, Nicola Gigli, and Giuseppe Savar{\'e}.
\newblock {\em Gradient flows: in metric spaces and in the space of probability
  measures}.
\newblock Springer Science \& Business Media, 2008.

\bibitem{benaim2021degenerate}
Michel Bena{\"\i}m, Nicolas Champagnat, William O{\c{c}}afrain, and Denis
  Villemonais.
\newblock Degenerate processes killed at the boundary of a domain.
\newblock {\em arXiv preprint arXiv:2103.08534}, 2021.

\bibitem{BB}
Jean-David Benamou and Yann Brenier.
\newblock A computational fluid mechanics solution to the {M}onge-{K}antorovich
  mass transfer problem.
\newblock {\em Numerische Mathematik}, 84(3):375--393, 2000.

\bibitem{carrillo2022optimal}
Jos{\'e}~A Carrillo, Lin Chen, and Qi~Wang.
\newblock An optimal mass transport method for random genetic drift.
\newblock {\em SIAM Journal on Numerical Analysis}, 60(3):940--969, 2022.

\bibitem{carrillo2003kinetic}
Jos{\'e}~A Carrillo, Robert~J McCann, and C{\'e}dric Villani.
\newblock Kinetic equilibration rates for granular media and related equations:
  entropy dissipation and mass transportation estimates.
\newblock {\em Revista Matematica Iberoamericana}, 19(3):971--1018, 2003.

\bibitem{Chalub_Souza:CMS_2009}
Fabio A. C.~C. Chalub and Max~O. Souza.
\newblock A non-standard evolution problem arising in population genetics.
\newblock {\em Commun. Math. Sci.}, 7(2):489--502, 2009.

\bibitem{chalub2021gradient}
Fabio~ACC Chalub, L{\'e}onard Monsaingeon, Ana~Margarida Ribeiro, and Max~O
  Souza.
\newblock Gradient flow formulations of discrete and continuous evolutionary
  models: a unifying perspective.
\newblock {\em Acta Applicandae Mathematicae}, 171(1):1--50, 2021.

\bibitem{chalub2006continuous}
Fabio~ACC Chalub and Max~O Souza.
\newblock The continuous limit of the moran process and the diffusion of mutant
  genes in infinite populations.
\newblock {\em arXiv preprint math/0602530}, 2006.

\bibitem{champagnat2018criteria}
Nicolas Champagnat, Kol{\'e}h{\`e}~Abdoulaye Coulibaly-Pasquier, and Denis
  Villemonais.
\newblock Criteria for exponential convergence to quasi-stationary
  distributions and applications to multi-dimensional diffusions.
\newblock In {\em S{\'e}minaire de Probabilit{\'e}s XLIX}, pages 165--182.
  Springer, 2018.

\bibitem{CV}
Nicolas Champagnat and Denis Villemonais.
\newblock Exponential convergence to quasi-stationary distribution and
  ${Q}$-process.
\newblock {\em Probability Theory and Related Fields}, 164(1):243--283, 2016.

\bibitem{daneri2008eulerian}
Sara Daneri and Giuseppe Savar{\'e}.
\newblock Eulerian calculus for the displacement convexity in the wasserstein
  distance.
\newblock {\em SIAM Journal on Mathematical Analysis}, 40(3):1104--1122, 2008.

\bibitem{danilkina2018conservative}
Olga Danilkina, Max~O Souza, and Fabio~ACC Chalub.
\newblock Conservative parabolic problems: Nondegenerated theory and
  degenerated examples from population dynamics.
\newblock {\em Mathematical Methods in the Applied Sciences},
  41(12):4391--4406, 2018.

\bibitem{disser2015gradient}
Karoline Disser and Matthias Liero.
\newblock On gradient structures for markov chains and the passage to
  wasserstein gradient flows.
\newblock {\em Networks \& Heterogeneous Media}, 10(2):233, 2015.

\bibitem{duan2019numerical}
Chenghua Duan, Chun Liu, Cheng Wang, and Xingye Yue.
\newblock Numerical complete solution for random genetic drift by energetic
  variational approach.
\newblock {\em ESAIM: Mathematical Modelling and Numerical Analysis},
  53(2):615--634, 2019.

\bibitem{EM}
Charles~L Epstein and Rafe Mazzeo.
\newblock Wright--fisher diffusion in one dimension.
\newblock {\em SIAM journal on mathematical analysis}, 42(2):568--608, 2010.

\bibitem{epstein2013degenerate}
Charles~L Epstein and Rafe Mazzeo.
\newblock {\em Degenerate diffusion operators arising in population biology}.
\newblock Number 185. Princeton University Press, 2013.

\bibitem{feehan}
Paul Feehan.
\newblock Perturbations of local maxima and comparison principles for
  boundary-degenerate linear differential equations.
\newblock {\em Transactions of the American Mathematical Society},
  373(8):5275--5332, 2020.

\bibitem{feller1951diffusion}
William Feller et~al.
\newblock {\em Diffusion processes in genetics}.
\newblock University of California Press Berkeley, CA, 1951.

\bibitem{fisher1923xxi}
Ronald~A Fisher.
\newblock Xxi.—on the dominance ratio.
\newblock {\em Proceedings of the royal society of Edinburgh}, 42:321--341,
  1923.

\bibitem{furioli2019wright}
Giulia Furioli, Ada Pulvirenti, Elide Terraneo, and Giuseppe Toscani.
\newblock Wright--fisher--type equations for opinion formation, large time
  behavior and weighted logarithmic-sobolev inequalities.
\newblock In {\em Annales de l'Institut Henri Poincar{\'e} C, Analyse non
  lin{\'e}aire}, volume~36, pages 2065--2082. Elsevier, 2019.

\bibitem{gigli2013gromov}
Nicola Gigli and Jan Maas.
\newblock Gromov--hausdorff convergence of discrete transportation metrics.
\newblock {\em SIAM Journal on Mathematical Analysis}, 45(2):879--899, 2013.

\bibitem{guillin2020quasi}
Arnaud Guillin, Boris Nectoux, and Liming Wu.
\newblock Quasi-stationary distribution for strongly feller markov processes by
  lyapunov functions and applications to hypoelliptic hamiltonian systems.
\newblock {\em HAL preprint: hal-03068461}, 2020.

\bibitem{hofbauer1998evolutionary}
Josef Hofbauer, Karl Sigmund, et~al.
\newblock {\em Evolutionary games and population dynamics}.
\newblock Cambridge university press, 1998.

\bibitem{JKO98}
Richard Jordan, David Kinderlehrer, and Felix Otto.
\newblock The variational formulation of the fokker--planck equation.
\newblock {\em SIAM journal on mathematical analysis}, 29(1):1--17, 1998.

\bibitem{karatzas2012brownian}
Ioannis Karatzas and Steven Shreve.
\newblock {\em Brownian motion and stochastic calculus}, volume 113.
\newblock Springer Science \& Business Media, 2012.

\bibitem{kimura1962probability}
Motoo Kimura.
\newblock On the probability of fixation of mutant genes in a population.
\newblock {\em Genetics}, 47(6):713, 1962.

\bibitem{kimura1964diffusion}
Motoo Kimura.
\newblock Diffusion models in population genetics.
\newblock {\em Journal of Applied Probability}, 1(2):177--232, 1964.

\bibitem{kimura1954stochastic}
Motoo Kimura et~al.
\newblock {\em Stochastic processes and distribution of gene frequencies under
  natural selection}.
\newblock Citeseer, 1954.

\bibitem{lisini2007characterization}
Stefano Lisini.
\newblock Characterization of absolutely continuous curves in wasserstein
  spaces.
\newblock {\em Calculus of variations and partial differential equations},
  28(1):85--120, 2007.

\bibitem{lisini2009nonlinear}
Stefano {L}isini.
\newblock Nonlinear diffusion equations with variable coefficients as gradient
  flows in wasserstein spaces.
\newblock {\em ESAIM: Control, Optimisation and Calculus of Variations},
  15(3):712--740, 2009.

\bibitem{maas2011gradient}
Jan Maas.
\newblock Gradient flows of the entropy for finite markov chains.
\newblock {\em Journal of Functional Analysis}, 261(8):2250--2292, 2011.

\bibitem{mccann1997convexity}
Robert~J McCann.
\newblock A convexity principle for interacting gases.
\newblock {\em Advances in mathematics}, 128(1):153--179, 1997.

\bibitem{meleard2012quasi}
Sylvie M{\'e}l{\'e}ard and Denis Villemonais.
\newblock Quasi-stationary distributions and population processes.
\newblock {\em Probability Surveys}, 9:340--410, 2012.

\bibitem{monsaingeon2021new}
L{\'e}onard Monsaingeon.
\newblock A new transportation distance with bulk/interface interactions and
  flux penalization.
\newblock {\em Calculus of Variations and Partial Differential Equations},
  60(3):1--49, 2021.

\bibitem{monsaingeon2020dynamical}
L{\'e}onard Monsaingeon, Luca Tamanini, and Dmitry Vorotnikov.
\newblock The dynamical {S}chr{\"o}dinger problem in abstract metric spaces.
\newblock {\em arXiv preprint arXiv:2012.12005}, 2020.

\bibitem{moran1958random}
Patrick Alfred~Pierce Moran.
\newblock Random processes in genetics.
\newblock In {\em Mathematical proceedings of the cambridge philosophical
  society}, volume~54, pages 60--71. Cambridge University Press, 1958.

\bibitem{muratori2020gradient}
Matteo Muratori and Giuseppe Savar{\'e}.
\newblock Gradient flows and evolution variational inequalities in metric
  spaces. i: Structural properties.
\newblock {\em Journal of Functional Analysis}, 278(4):108347, 2020.

\bibitem{otto2001geometry}
Felix Otto.
\newblock The geometry of dissipative evolution equations: the porous medium
  equation.
\newblock 2001.

\bibitem{roynette2006some}
B~Roynette, P~Vallois, and Marc Yor.
\newblock Some penalisations of the wiener measure.
\newblock {\em Japanese Journal of Mathematics}, 1(1):263--290, 2006.

\bibitem{sandier2004gamma}
Etienne Sandier and Sylvia Serfaty.
\newblock Gamma-convergence of gradient flows with applications to
  ginzburg-landau.
\newblock {\em Communications on Pure and Applied Mathematics: A Journal Issued
  by the Courant Institute of Mathematical Sciences}, 57(12):1627--1672, 2004.

\bibitem{OTAM}
Filippo Santambrogio.
\newblock Optimal transport for applied mathematicians.
\newblock {\em Birk{\"a}user, NY}, 55(58-63):94, 2015.

\bibitem{santambrogio2017euclidean}
Filippo Santambrogio.
\newblock $\{$Euclidean, metric, and Wasserstein$\}$ gradient flows: an
  overview.
\newblock {\em Bulletin of Mathematical Sciences}, 7(1):87--154, 2017.

\bibitem{shahshahani1979new}
Siavash Shahshahani.
\newblock {\em A new mathematical framework for the study of linkage and
  selection}.
\newblock American Mathematical Soc., 1979.

\bibitem{star2013effects}
Bastiaan Star and Hamish~G Spencer.
\newblock Effects of genetic drift and gene flow on the selective maintenance
  of genetic variation.
\newblock {\em Genetics}, 194(1):235--244, 2013.

\bibitem{sturm2005convex}
Karl-Theodor Sturm.
\newblock Convex functionals of probability measures and nonlinear diffusions
  on manifolds.
\newblock {\em Journal de math{\'e}matiques pures et appliqu{\'e}es},
  84(2):149--168, 2005.

\bibitem{villani2003topics}
C{\'e}dric Villani.
\newblock {\em Topics in optimal transportation}.
\newblock Number~58. American Mathematical Soc., 2003.

\bibitem{villani_BIG}
C{\'e}dric Villani.
\newblock {\em Optimal transport: old and new}, volume 338.
\newblock Springer Science \& Business Media, 2008.

\bibitem{wright1929evolution}
Sewall Wright.
\newblock The evolution of dominance.
\newblock {\em The American Naturalist}, 63(689):556--561, 1929.

\bibitem{zettl2012sturm}
Anton Zettl.
\newblock {\em Sturm-liouville theory}.
\newblock Number 121. American Mathematical Soc., 2012.

\end{thebibliography}

\bigskip
\noindent
{\sc
Jean-Baptiste (\href{mailto:jeanbaptiste.casteras@gmail.com}{\tt jeanbaptiste.casteras@gmail.com}).
\\
CMAFcIO, Faculdade de Ci\^encias da Universidade de Lisboa, Edificio C6, Piso 1, Campo Grande 1749-016 Lisboa, Portugal
}
\\

\noindent
{\sc
L\'eonard Monsaingeon (\href{mailto:leonard.monsaingeon@univ-lorraine.fr}{\tt leonard.monsaingeon@univ-lorraine.fr}).
\\
Institut \'Elie Cartan de Lorraine, Universit\'e de Lorraine, Site de Nancy B.P. 70239, F-54506 Vandoeuvre-l\`es-Nancy Cedex, France
\\
Grupo de F\'isica Matem\'atica, GFMUL, Faculdade de Ci\^encias, Universidade de Lisboa, 1749-016 Lisbon, Portugal.
}

\end{document}